\documentclass[toc=bib,DIV=15]{scrartcl}

\usepackage[T1]{fontenc}
\usepackage[utf8]{inputenc}
\usepackage[english]{babel}
\usepackage{lmodern}
\usepackage{exscale}
\usepackage[babel]{microtype}
\usepackage{mathtools, mathrsfs}
\usepackage{amssymb}
\usepackage{enumitem}
\usepackage{booktabs}
\usepackage{bbm}
\usepackage{tikz}
\usetikzlibrary{arrows.meta,positioning,decorations.pathreplacing,calc,fit,backgrounds,matrix}
\usepackage{tikz-cd}

\usepackage{csquotes}
\usepackage[style=alphabetic, sorting=nyt, maxnames=10, maxalphanames=5, backend=biber, giveninits=true]{biblatex}
\DeclareDelimFormat[textcite]{multinamedelim}{\textendash}
\DeclareDelimAlias[textcite]{finalnamedelim}{multinamedelim}
\DeclareNolabel{\nolabel{\regexp{[\p{Z}\p{P}\p{S}\p{C}]+}}}
\DeclareSourcemap{\maps[datatype=bibtex]{\map{\step[fieldset=issn, null]\step[fieldset=location, null]}}}

\usepackage[colorinlistoftodos]{todonotes}
\definecolor{softblue}{RGB}{60, 100, 210}
\DeclareDocumentCommand{\najib}{so+m}{\todo[color=green!70!black, \IfBooleanT{#1}{inline}, caption={\IfValueTF{#2}{#2}{#3}}]{#3}}
\DeclareDocumentCommand{\victor}{so+m}{\todo[color=softblue!80!white, \IfBooleanT{#1}{inline}, caption={\IfValueTF{#2}{#2}{#3}}]{#3}}

\definecolor{corail}{rgb}{0.9882,0.4627,0.4157}
\definecolor{viola}{RGB}{166,146,186}
\definecolor{mocha}{RGB}{164,120,100}
\definecolor{peachfuzz}{rgb}{0.8, 0.5451, 0.3961}
\definecolor{clouddancer}{RGB}{240, 238, 233}
\definecolor{bluefusion}{RGB}{73, 98, 117}
\definecolor{veiledvista2}{RGB}{109, 136, 110}
\usepackage[numbered]{bookmark}
\hypersetup{
  colorlinks,
  linkcolor=veiledvista2,
  citecolor=veiledvista2,
  urlcolor=veiledvista2,
}
\usepackage{amsthm}
\numberwithin{equation}{section}
\theoremstyle{plain}
\newtheorem{theorem}[equation]{Theorem}
\newtheorem{proposition}[equation]{Proposition}
\newtheorem{corollary}[equation]{Corollary}
\newtheorem{lemma}[equation]{Lemma}

\newtheorem{theoremintro}{Theorem}

\theoremstyle{definition}
\newtheorem{definition}[equation]{Definition}

\theoremstyle{remark}

\newtheorem{remark}[equation]{Remark}

\newcommand{\R}{\mathbb{R}}
\newcommand{\F}{\mathbb{F}}
\newcommand{\Z}{\mathbb{Z}}
\newcommand{\Q}{\mathbb{Q}}

\DeclareMathOperator{\Tors}{Tors}
\DeclareMathOperator{\tr}{tr}
\DeclareMathOperator{\Hom}{Hom}
\DeclareMathOperator{\Ext}{Ext}
\DeclareMathOperator{\ind}{ind}
\DeclareMathOperator{\gr}{gr}

\newcommand{\id}{\mathrm{id}}
\newcommand{\SL}{\mathrm{SL}}
\newcommand{\T}{\mathcal{T}}
\newcommand{\cC}{\mathcal{C}}
\newcommand{\cA}{\mathcal{A}}
\newcommand{\cU}{\mathcal{U}}

\newcommand{\HBM}{\bar{H}^{\mathrm{BM}}}

\begin{document}
\title{Odd-primary torsion in the homology of unordered configurations on the torus}
\author{Najib Idrissi\thanks{Université Paris Cité, Sorbonne Université, CNRS, IMJ-PRG, F-75013 Paris, France} \and Victor Roca i Lucio\thanks{Université Paris Cité, Sorbonne Université, CNRS, IMJ-PRG, F-75013 Paris, France}}
\date{August 25, 2026}
\maketitle

\begin{abstract}
  The main object of study of this paper is $B_k(\Sigma_1)$, the unordered configuration space of $k$ points in the torus $\Sigma_1$.
  First, we produce a marked-point transfer argument which, combined with Chen--Zhang's theorem, establishes that $H_*(B_k(\Sigma_1);\Z)$ has no $p$-torsion for $k\leq 2p-1$.
  Secondly, at the threshold $k=2p$, we prove that $H_{2p-2}(B_{2p}(\Sigma_1);\Z)$ has $p$-torsion if and only if $p\geq 5$.
  For $p\geq5$, the class is the image under puncture filling of a generator of the Bianchi--Stavrou torsion group $\Z/p$ of the once-punctured torus; for $p=3$, Napolitano's calculation shows that this punctured class dies after filling.
  Finally, we also prove that $H_{2p}(B_{2p}(\Sigma_1);\Z)$ and $H_{2p+1}(B_{2p}(\Sigma_1);\Z)$ have no $p$-torsion for every odd $p$.
\end{abstract}

\tableofcontents

\addsec{Introduction}
\label{sec:introduction}

Let $M$ be a manifold and let $B_k(M)$ denote the unordered configuration space of $k$ points in $M$.
When $M$ is a surface other than $S^2$ or $\mathbb{RP}^2$, its ordered and unordered configuration spaces are aspherical; hence $B_k(M)$ is a classifying space for the corresponding surface braid group, and the homology of $B_k(M)$ computes the group homology of that braid group~\cite{fadellConfigurationSpaces1962}.
For the plane, this homology has been completely understood since the 1970s: Fuks computed the mod-$2$ cohomology of the classical braid groups~\cite{fuksCohomologiesGroupCOS1970}, F.~Cohen determined the mod-$p$ homology at all primes~\cite{cohenHomologyIteratedLoop1976}, and Vainshtein described the integral cohomology~\cite{vainsteinCohomologyBraidGroups1978}.
For braid groups of closed surfaces, integral results remain far scarcer.
In this article, we study the homology of the torus braid groups, or equivalently the homology of the unordered configuration spaces of $k$ points on the torus $\Sigma_1$.
The group $\pi_1(B_k(\Sigma_1))$ is also called the \textit{elliptic} braid group; its group algebra supplies the braid-group input in constructions of type-$A$ double affine Hecke algebras~\cite{jordanQuantumDModules2009}.

With field coefficients, more is known about these spaces.
The graded $\mathbb F_2$-homology of unordered torus configurations is known from Bödigheimer--Cohen--Taylor~\cite{bodigheimerHomologyConfigurationSpaces1989}, and Zhang later described its Dyer--Lashof structure~\cite{zhangQuillenHomology2025}.
Rational Betti numbers were computed by Maguire, Schiessl, and Drummond-Cole--Knudsen~\cite{maguireComputingCohomology2016,schiesslBettiNumbersTorus2016,drummond-coleBettiNumbersConfiguration2017}; Pagaria determined the rational cohomology ring, mixed Hodge structure, and mapping-class-group action~\cite{pagariaCohomologyRingsTorus2020}.
Integrally, Napolitano computed the cohomology of $B_k(\Sigma_1)$ for $k\leq6$, and in that range all torsion is $2$-primary~\cite[Table~2]{napolitanoCohomologyConfigurationSpaces2003}.
The odd-primary integral homology, by contrast, remains largely unknown: the calculations just cited exhibit no odd-primary torsion class in $H_*(B_k(\Sigma_1);\Z)$ and do not decide whether such torsion exists beyond their ranges. The existence of odd-primary torsion was in fact predicted in \cite[Remark~3.7 in the arXiv version]{chenModHomologyUnordered2024}, but this question remained unsolved.

This scarcity has structural sources. Integral torsion is invisible to rational models, and calculations at odd primes involve genuinely characteristic-$p$ phenomena such as Bocksteins and power operations~\cite{chenModHomologyUnordered2024,bianchiHomologyConfigurationSpaces2024,zhangQuillenHomology2025}.
Moreover, the most effective geometric tools are specific to open surfaces: there, configurations can be pushed towards the missing boundary, which produces stabilization and scanning maps~\cite{mcduffConfigurationSpacesPositive1975} and underlies Bianchi--Stavrou's recent computation of the mod-$p$ homology of configuration spaces of compact orientable surfaces with one boundary component~\cite{bianchiHomologyConfigurationSpaces2024}.
A closed surface admits no such stabilization, and the puncture-filling map comparing the punctured and closed situations can kill torsion classes---and we prove that this actually happens on the torus at the prime~$3$.
Finally, the sought-after torsion lies beyond the reach of homological stability, which fixes the homological degree as $k$ tends to infinity~\cite{randalWilliamsHomologicalStability2013}: already for the classical braid groups and an odd prime $p$, the first $p$-torsion occurs at weight $2p$~\cite{vainsteinCohomologyBraidGroups1978}, and the torsion class studied here lies in the diagonal regime $k=2p$, in degree $2p-2$, with both the weight and the degree growing with $p$.

Our main results settle this threshold question for the torus.
For every prime $p\geq5$, we show that the first $p$-torsion in $H_*(B_k(\Sigma_1);\Z)$ appears exactly at weight $k=2p$, where we exhibit it in degree $2p-2$; for every odd prime, Chen--Zhang's theorem together with Theorem~\ref{thmA} gives no $p$-torsion below this threshold weight, and Theorem~\ref{thmC} excludes it in degrees $2p$ and $2p+1$ at the threshold itself.
The resulting dichotomy is a genuinely closed-surface phenomenon: the punctured torus carries a candidate torsion group $\Z/p$ for every odd prime, and it is the filling of the puncture that kills its generators when $p=3$ while preserving them for all $p\geq5$.

Beyond the torus case, and in a somewhat different direction, our recent work \cite{idrissiRocaPointSet2026} provides us with explicit point-set chain complexes over $\mathbb{F}_p$ which compute the mod-$p$ homology of unordered configuration spaces of parallelizable manifolds (which include the torus). These point-set constructions allowed us to perform computations in low weights and degrees \cite[Section~3.4 and Proposition~3.37]{idrissiRocaPointSet2026} using a dedicated computer package \cite{idrissiRocaLucioSoftware}. However, at present the computational threshold is easily reached, which explains why we considered different methods in this paper.

Let us also mention that analogous torsion questions are open for graph configuration spaces, whose homology is studied in particular as a model for quantum statistics of particles constrained to networks~\cite{maciazekNonAbelianQuantum2019}.
Ko--Park determine the torsion in first homology of unordered graph configurations~\cite{koCharacteristicsGraphBraid2012}; torsion-freeness of ordered configuration-space homology is known for restricted graph classes, including trees with loops by Chettih--Lütgehetmann~\cite{chettihHomologyConfiguration2018}, while the general assertion remains a folklore conjecture and no odd-primary torsion is presently known in unordered graph-configuration homology~\cite{anAsymptoticHomology2022,anAsymptoticCorrigendum2026,hainautRepresentationAsymptotics2025}.

\subsection*{Main results}

Chen--Zhang~\cite[Theorem~1.1]{chenModHomologyUnordered2024} proved that no $p$-torsion appears in $H_*(B_k(\Sigma_1);\Z)$ for $k\le p$ and odd $p$, and explicitly predicted in the arXiv version~\cite[Remark~3.7]{chenModHomologyUnordered2024} that the first $p$-power torsion should occur at weight $2p$.
Bianchi--Stavrou's punctured-surface calculation shows that the $p$-primary torsion subgroup at that weight is $\Z/p$~\cite{bianchiHomologyConfigurationSpaces2024}.
We prove that its generators survive puncture filling for every $p\ge5$, whereas Napolitano's calculation shows that they die for $p=3$~\cite{napolitanoCohomologyConfigurationSpaces2003}.
We also determine a high-degree non-torsion range at weight $2p$.
Thus the predicted threshold fails at $p=3$: there is no $3$-torsion through weight $6$, so the first such torsion, if any, occurs at weight at least $7$.
To the best of our knowledge, the surviving classes form the first infinite family of odd-primary torsion classes in unordered configuration spaces on a closed torus.

\begin{theoremintro}\label{thmA}
  For every odd prime $p$, if $p < k < 2p$, then $H_*(B_k(\Sigma_1);\Z)$ has no $p$-torsion.
\end{theoremintro}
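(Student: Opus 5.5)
We will use a \emph{marked-point transfer}, reducing to configurations in the once-punctured torus and then to known computations there. Write $\Sigma_{1,1}=\Sigma_1\setminus\{*\}$. Since the torus is a Lie group, it acts on itself by translations, and hence on $B_k(\Sigma_1)$. Consider the space $\tilde B_k$ of configurations of $k$ unordered points together with one distinguished (marked) point of the configuration. Translating the marked point to the origin $*$ gives a homeomorphism
\[
  \tilde B_k \cong \Sigma_1\times B_{k-1}(\Sigma_{1,1}).
\]
The forgetful map $\pi\colon\tilde B_k\to B_k(\Sigma_1)$ is a $k$-sheeted covering, so it has a transfer $\tau$ with $\pi_*\circ\tau=k\cdot\id$ on integral homology.

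Now let $p<k<2p$. Then $p\nmid k$, so multiplication by $k$ is invertible on the $p$-primary part. The identity $\pi_*\circ\tau=k\cdot\id$ therefore shows that the $p$-torsion of $H_*(B_k(\Sigma_1);\Z)$ injects, via $\tau$, into the $p$-torsion of
\[
  H_*(\Sigma_1\times B_{k-1}(\Sigma_{1,1});\Z).
\]
By the Künneth formula, and because $H_*(\Sigma_1;\Z)$ is free, it suffices to show that $H_*(B_{k-1}(\Sigma_{1,1});\Z)$ has no $p$-torsion when $k-1\le 2p-2$.

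For the open surface we would invoke Bianchi--Stavrou's computation of the mod-$p$ homology of configurations in $\Sigma_{1,1}$. The first $p$-torsion there sits in weight $2p$; this is the same fact that gives the $\Z/p$ quoted in the introduction. The cleanest route is the stable splitting $B_m(\Sigma_{1,1})_+\simeq\bigvee_{j\le m}$ of the associated graded pieces of a scanning-type model. Equivalently, one can compare the Betti numbers of $H_*(B_m(\Sigma_{1,1});\F_p)$ with the rational ones for $m<2p$. Their equality follows from the Cohen-type description, in which the first non-rational generators (Dyer--Lashof operations $\beta Q$ on weight-$2$ classes, or $Q$ on weight-$2$ classes in the $E_2$ context) have weight $2p$. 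By the universal coefficient theorem, equality of mod-$p$ and rational Betti numbers in all degrees means there is no $p$-torsion.

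The main obstacle is extracting this precise weight bound from Bianchi--Stavrou. Their answer is phrased via a functorial description whose $p$-divisible pieces must be checked to start in weight exactly $2p$ rather than $p$. For weight $m<p$ one can alternatively argue directly: Chen--Zhang's method, or the $E_2$ structure with its weight-$p$ operations, applies there. The delicate range is $p\le m\le 2p-2$. In that range we would check that the weight-$p$ operation $Q$ on weight-$1$ classes of $\Sigma_{1,1}$ either vanishes or is paired with its Bockstein partner in a way that introduces no $p$-torsion. This is plausible because the relevant ($E_2$, or framed) Dyer--Lashof operations on odd-degree weight-one classes in a $2$-dimensional setting contribute only $\Z/p$ summands from weight $2p$ onward.
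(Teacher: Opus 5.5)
Your proof is the paper's own: the $k$-sheeted marked-point cover with transfer (your injectivity of the transfer on $p$-primary torsion is equivalent to the paper's $\Z_{(p)}$-splitting), the translation homeomorphism with $\Sigma_1\times B_{k-1}(\Sigma_{1,1}^\circ)$, K\"unneth, and Bianchi--Stavrou's absence of $p$-torsion in weights below $2p$. The ``obstacle'' in your last paragraph is already settled by Bianchi--Stavrou's fake base-change corollary, which identifies $H_*(B_\bullet(\Sigma_{1,1}^\circ);\F_p)$ dimensionwise with rational homology tensored with $\F_p[\beta_0,\alpha_1,\beta_1,\ldots]$, whose generators all have weight at least $2p$; your Betti-number comparison plus universal coefficients then finishes, so drop the Dyer--Lashof speculation, which is also self-contradictory (an excess class paired by a non-zero Bockstein \emph{is} $p$-torsion).
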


\begin{theoremintro}\label{thmB}
  For every odd prime $p$, $H_{2p-2}(B_{2p}(\Sigma_1);\Z)$ has $p$-torsion if and only if $p \geq 5$.
  For $p\geq5$, puncture filling sends every generator of the $p$-primary torsion subgroup $\Z/p$ of $H_{2p-2}(B_{2p}(\Sigma_{1,1}^\circ);\Z)$ to a non-zero order-$p$ class on the closed torus.
\end{theoremintro}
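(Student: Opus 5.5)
The argument splits into the case $p=3$, which is settled by known computations, and the case $p\geq5$, where I would detect the filled class by a mod-$p$ invariant built from the punctured torus.

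\textbf{Case $p=3$.} Here $B_6(\Sigma_1)$ is in Napolitano's range $k\leq 6$. By \cite[Table~2]{napolitanoCohomologyConfigurationSpaces2003}, all torsion in the integral cohomology of $B_6(\Sigma_1)$ is $2$-primary. By universal coefficients, $p$-torsion in $H_4$ would appear as $p$-torsion in $H^5$, so $H_4(B_6(\Sigma_1);\Z)$ has no $3$-torsion. In particular the Bianchi--Stavrou generator dies under filling.

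\textbf{Case $p\geq5$, setup.} Write $\Sigma_{1,1}^\circ$ for the once-punctured torus, $\iota\colon B_{2p}(\Sigma_{1,1}^\circ)\to B_{2p}(\Sigma_1)$ for puncture filling, and $x$ for a generator of the $p$-primary torsion $\Z/p\subset H_{2p-2}(B_{2p}(\Sigma_{1,1}^\circ);\Z)$ \cite{bianchiHomologyConfigurationSpaces2024}. It suffices to show that $\iota_*x\neq0$: then $\iota_*x$ is a nonzero class killed by $p$, hence of order exactly $p$, and every generator of $\Z/p$ is a unit multiple of $x$.

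\textbf{Detecting $\iota_*x$.} I would use mod-$p$ reduction and the Bockstein. Since $x$ has order $p$, there is a class $y\in H_{2p-1}(B_{2p}(\Sigma_{1,1}^\circ);\F_p)$ with integral Bockstein $\tilde\beta y=x$. To show $\iota_*x\neq0$, it is enough to show that $\iota_*y$ is not the reduction of an integral class. A concrete way to do this is to find a mod-$p$ cohomology class $c\in H^{2p-2}(B_{2p}(\Sigma_1);\F_p)$ with $\langle \iota^*c,\bar x\rangle\neq0$, where $\bar x$ is the mod-$p$ reduction of $x$, and with $c$ not the reduction of an integral class, in a way that is compatible with $\beta$. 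The $\F_p$-homology of $B_{2p}(\Sigma_{1,1}^\circ)$ is explicit from Bianchi--Stavrou. In their description, $x$ is the Bockstein of a class built from a Dyer--Lashof/Browder-type operation (e.g.\ $\beta Q$ applied to a class in weight $2$) tensored with classes from $H_1$ of the torus. For the closed torus I would compute the relevant weight-$2p$ piece of $H_*(B_{2p}(\Sigma_1);\F_p)$ through the Chen--Zhang \cite{chenModHomologyUnordered2024} / Bödigheimer--Cohen--Taylor style description: a Chevalley--Eilenberg type complex of $H^{-*}(\Sigma_1)\otimes$(free restricted Poisson/Lie algebra on one generator). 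The point is that the only differential is multiplication by the top class of $\Sigma_1$ through the bracket, i.e.\ the Euler-class-type correction coming from the point-pushing map $B_{2p-1}\times\Sigma_1\to B_{2p}$.

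\textbf{Main obstacle.} The crux is computing this differential on the specific mod-$p$ class $y$ (equivalently $\bar x$), and showing that it does not make $\iota_*x$ a boundary. In the relevant bidegree, the correction to $\bar x$ is a multiple of $[\text{class}, \omega]$ with coefficient a binomial-type number. I expect this coefficient to be something like $k-1$, or the value of $(2p-2)!/\dots$, modulo $p$, which vanishes for $p=3$ but not for $p\geq5$. Equivalently, I would compare with the $p=3$ answer forced by Napolitano as a consistency check. The first approach I would try is a transfer/puncture long exact sequence: the cofiber sequence $B_{2p}(\Sigma_{1,1}^\circ)\to B_{2p}(\Sigma_1)\to \Sigma^2 B_{2p-1}(\Sigma_{1,1}^\circ)_+$-type (obtained by adding a point at the filled puncture). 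Under this sequence, $\iota_*x=0$ if and only if $x$ lies in the image of a connecting map from $H_{2p-1}$ of $B_{2p-1}$ of the punctured torus, shifted. That group has no $p$-torsion by Theorem~\ref{thmA} and Chen--Zhang, so its image is generated by images of torsion-free classes. I would then decide, by computing the connecting map mod $p$ on the explicit Bianchi--Stavrou basis, whether $x$ is hit. I expect this computation to reduce to a single scalar nonzero mod $p$ exactly when $p\geq5$.
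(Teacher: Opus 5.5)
\textbf{There is a genuine gap: the case $p\geq5$ is never proved.} Your treatment of $p=3$ is correct and is exactly the paper's argument (Napolitano's table plus universal coefficients). The puncture cofiber sequence you reach at the end is also the paper's Gysin sequence: $j_*(\tau_p)=0$ if and only if $\tau_p\in\operatorname{im}\bigl(\partial\colon H_{2p-3}(B_{2p-1}(\Sigma_{1,1}^\circ))\to H_{2p-2}(B_{2p}(\Sigma_{1,1}^\circ))\bigr)$. Your Bockstein reformulation is also correct, but it only restates the problem.

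The proposal stops exactly where the proof has to start. You never show $\tau_p\notin\operatorname{im}\partial$. Instead you predict that some computation will yield a scalar that is non-zero mod $p$ precisely when $p\geq5$, and nothing supports this.
\begin{itemize}
\item The Bianchi--Stavrou basis describes only the punctured surface and gives no formula for the filling map $\partial$.
\item Your claim that the weight-$2p$ mod-$p$ homology of $B_{2p}(\Sigma_1)$ is computed by a Chevalley--Eilenberg complex whose only differential is the bracket with the top class is unjustified. Weight $2p$ is exactly where such low-weight Lie models stop applying: even on the punctured torus the comparison holds only for $k<2p$, because the generator $y^{[p]}$ and the relation $y^p=0$ produce $\alpha_1$ and $\beta_0$ there.
\item Your candidate coefficients do not behave as you want: $k-1=2p-1\equiv-1$ is a unit for every $p$, and $(2p-2)!\equiv0$ for every $p$.
\item The dichotomy is not a single scalar. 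The source $M=H_{2p-3}(B_{2p-1}(\Sigma_{1,1}^\circ);\F_p)$ has dimension $3p-4$. The paper never computes $\bar\partial$ at all; even at $p=3$ it deduces that $\tau_3$ is hit from Napolitano's table.
\item Torsion-freeness of the source comes from Bianchi--Stavrou's punctured-torus result, not from Theorem~\ref{thmA} or Chen--Zhang, which concern the closed torus. In any case, it does not constrain whether a torsion class is hit.
\end{itemize}

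\textbf{The missing idea is equivariance, which avoids computing $\partial$ altogether.}
\begin{itemize}
\item Extending boundary-fixing diffeomorphisms over the disk makes the Gysin sequence $\Gamma_{1,1}$-equivariant.
\item The kernel of the mod-$p$ Johnson map $\xi_\tau^p$ acts trivially on the mod-$p$ Bianchi--Stavrou complex. So $\bar\partial\colon M\to N$ is linear over the finite group $\widetilde G=\Gamma_{1,1}/\ker(\xi_\tau^p)$, an extension of $\SL_2(\F_p)$ by a group $A$ that is either $0$ or $H$.
\item The line $\F_p\beta_0$ is stable, since the $p$-torsion subgroup is characteristic. It is therefore a trivial representation, because $\widetilde G$ is perfect for $p\geq5$.
\item On the source side, filtering by surface divided-power degree removes the quadratic correction in the action. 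Because $2p-1<2p$, the associated graded is computed by a small Chevalley--Eilenberg complex, giving $\gr_F M\cong(H\otimes\Gamma^{p-2}H)\oplus\Gamma^{p-3}H$, with composition factors $V_{p-1},V_{p-3},V_{p-3}$.
\item For $p\geq5$ none of these factors is trivial. Hence the image of $\bar\partial$ contains no non-zero fixed vector, so $\beta_0\notin\operatorname{im}\bar\partial$, and $j_*(\tau_p)\neq0$.
\item At $p=3$ the factor $V_{p-3}=V_0$ is trivial. This is exactly why the obstruction disappears there and Napolitano's computation is needed.
\end{itemize}
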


\begin{theoremintro}\label{thmC}
  For every odd prime $p$, $H_{2p}(B_{2p}(\Sigma_1);\Z)$ and $H_{2p+1}(B_{2p}(\Sigma_1);\Z)$ have no $p$-torsion.
\end{theoremintro}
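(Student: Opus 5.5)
To prove Theorem~\ref{thmC}, I would reduce to mod-$p$ Betti numbers. By the universal coefficient theorem, the $p$-torsion of $H_d(B_{2p}(\Sigma_1);\Z)$ is detected by comparing $\dim_{\F_p} H_d(B_{2p}(\Sigma_1);\F_p)$ with the rational Betti number $b_d$. Precisely,
\[
\dim H_d(-;\F_p)=b_d+t_d+t_{d-1},
\]
where $t_j$ is the number of $p$-primary cyclic summands in $H_j(-;\Z)$. It therefore suffices to show that $\dim H_d(B_{2p}(\Sigma_1);\F_p)=b_d$ for $d=2p,2p+1,2p+2$; this forces $t_{2p+1}=t_{2p}=0$. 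Since $B_{2p}(\Sigma_1)$ is an open $4p$-manifold homotopy equivalent to a complex of dimension at most $2p+1$ (the torus configuration space deformation retracts, after quotienting by a translation, onto something of dimension $2p+1$), $H_{2p+2}$ vanishes. In that case I only need equality of mod-$p$ and rational Betti numbers in degrees $2p$ and $2p+1$, together with freeness of $H_{2p+1}(-;\Z)$, which is automatic in top degree.

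For the rational side I would use the known Betti numbers of Drummond-Cole--Knudsen and Pagaria. For the mod-$p$ side I would use the splitting $B_k(\Sigma_1)$-homology in terms of the Chevalley--Eilenberg / Knudsen model with $\F_p$-coefficients. Alternatively, I would use the fibration coming from the free translation action of $\Sigma_1$, which gives $B_k(\Sigma_1)\simeq \Sigma_1\times_{\Z/k\text{-ish}} F$, where $F$ is a configuration space with one point fixed, so $F=B_{k-1}(\Sigma_1\setminus pt)$ up to a finite cover. For $k=2p$ the relevant finite group has order prime to $p$ only on its odd part, so I would use the transfer to reduce the top-degree mod-$p$ homology to that of $\Sigma_1\times B_{2p-1}(\Sigma_{1,1}^\circ)$, twisted by the deck action. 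The mod-$p$ homology of $B_{2p-1}(\Sigma^\circ_{1,1})$ is given by Bianchi--Stavrou. Its $p$-torsion-free range would then follow from Theorem~\ref{thmA}-style arguments, since weight $2p-1<2p$ carries no $p$-torsion, and Bianchi--Stavrou identifies the first torsion of the punctured surface at weight $2p$. Thus the top two degrees of $B_{2p-1}(\Sigma^\circ_{1,1})$, shifted by the $\Sigma_1$ factor, have mod-$p$ Betti numbers equal to rational ones.

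The main obstacle is the step where translation by $p$-torsion points of $\Sigma_1$ does not act freely when $p\mid k$. The marked-point transfer argument used for Theorem~\ref{thmA} presumably needs $p\nmid k$, which fails at $k=2p$. To handle this I would first try a Leray--Serre spectral sequence for $B_{2p}(\Sigma_1)\to \Sigma_1$, given by summing the points in the group law, with fiber $X$. I would then check that in total degrees $2p,2p+1$ only the terms $E^2_{a,b}$ with $b\ge 2p-1$ contribute. Those fiber groups are top-degree homology of $X$, and by Bianchi--Stavrou's computation for the punctured surface (via $X\simeq$ a $\Z/2p$-quotient-type configuration space) they should be $p$-torsion-free. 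The hardest check is that the single torsion class $\Z/p$ sits in fiber degree $2p-2$ and cannot contribute to $H_{2p}$ or $H_{2p+1}$ through differentials or extensions. I would verify this by bounding $b\le 2p-2$ contributions to total degree at most $2p$, and then showing the $E^2_{2,2p-2}$ torsion is hit or killed using the monodromy action of $\pi_1\Sigma_1$ computed explicitly on the Bianchi--Stavrou generator.
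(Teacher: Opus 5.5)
\textbf{Verdict: there is a genuine gap. Your argument for $H_{2p+1}$ is fine, but nothing in the proposal establishes the degree-$(2p+1)$ mod-$p$ Betti number needed for $H_{2p}$.}

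Your treatment of $H_{2p+1}$ works. $B_{2p}(\Sigma_1)$ has homological dimension $2p+1$: the torus braid group has cohomological dimension $k+1$, or, as in the paper, Napolitano's closed complex has no cells below Borel--Moore degree $2p-1$. Hence $H_{2p+1}$ is free and $H_{2p+2}=0$; this is exactly the paper's observation that $\HBM_{2p-2}(B_{2p}(\Sigma_1))=0$. In your notation this gives $\dim H_{2p+1}(B_{2p}(\Sigma_1);\F_p)=b_{2p+1}+t_{2p}$. So for $H_{2p}$ you only need the single equality of mod-$p$ and rational Betti numbers in degree $2p+1$. Do not ask for the equality in degree $2p$: it would also give $t_{2p-1}=0$, i.e.\ no $p$-torsion in $H_{2p-1}(B_{2p}(\Sigma_1);\Z)$, which the paper explicitly leaves open.

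None of your proposed routes computes $H_{2p+1}(B_{2p}(\Sigma_1);\F_p)$:
\begin{itemize}
\item \emph{Transfer.} This is useless because $p\mid 2p$, as you note yourself.
\item \emph{Mod-$p$ Chevalley--Eilenberg/Knudsen model.} None is available at weight $2p$, which is exactly where characteristic-$p$ corrections start. Even for the punctured torus, the paper's CE comparison holds only for $k<2p$.
\item \emph{Sum map $s:B_{2p}(\Sigma_1)\to\Sigma_1$.} This is indeed a bundle. Its fiber $X=s^{-1}(0)$ has homological dimension $2p-1$, so $H_{2p+1}(B_{2p}(\Sigma_1);\F_p)\cong H_{2p-1}(X;\F_p)^{\pi_1\Sigma_1}$. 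Also, $p$-torsion in $H_{2p}$ can come from $E^2_{1,2p-1}=H_1(\pi_1\Sigma_1;H_{2p-1}(X;\Z))$ and from $E^2_{2,2p-2}$, so your claim that only $b\ge 2p-1$ contributes is false in total degree $2p$.
\end{itemize}

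Two problems make the sum-map route fail as stated:
\begin{itemize}
\item $X$ is related to $B_{2p-1}(\Sigma_{1,1}^\circ)$ only through the common cover $\{(Q,x):x\in Q,\ \sum Q=0\}$. This cover has degree $2p$ over $X$ and $(2p)^2$ over $B_{2p-1}(\Sigma_{1,1}^\circ)$. So Bianchi--Stavrou's $p$-torsion-freeness in weight $2p-1$ says nothing about $X$.
\item The monodromy acts through translations by the $2p$-torsion points $\Sigma_1[2p]\cong(\Z/2p)^2$, a group of order divisible by $p$. So even if $H_{2p-1}(X;\Z)$ is free, $H_1(\pi_1\Sigma_1;-)$ can have $p$-torsion. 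Likewise, invariants can have mod-$p$ excess: the augmentation ideal of $\Z[\Z/p]$ has no invariants, but its reduction mod $p$ does.
\end{itemize}
Finally, $\tau_p$ lives in $B_{2p}(\Sigma_{1,1}^\circ)$, not in the fiber $X$. There is therefore no ``Bianchi--Stavrou generator'' in fiber degree $2p-2$ on which to compute monodromy.

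The missing idea is the paper's. By Poincar\'e duality and universal coefficients, $p$-torsion in $H_{2p}$ and $H_{2p+1}$ is $p$-torsion in $\HBM_{2p-1}$ and $\HBM_{2p-2}$, the very bottom of Napolitano's Borel--Moore complex. There, $\HBM_{2p-1}(B_{2p}(\Sigma_1);\Z_{(p)})$ is the cokernel of the closed/punctured connecting map $\delta:W_{2p}\to W_{2p-1}$. This map is computed explicitly on the pure all-pinned cells $\Pi(e,d)$: they generate in weight $2p$, form a basis in weight $2p-1$, and $\delta$ has coefficients $\pm2$ with disjoint supports. The cokernel is $\Z_{(p)}^{p-1}$, which is free.
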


\subsection*{Proof strategies}

We prove Theorem~\ref{thmA} using the $k$-sheeted covering
$B_k^\odot(\Sigma_1)\to B_k(\Sigma_1)$, where $B_k^\odot(\Sigma_1)$ is the
space of unordered configurations with a marked point. For $p<k<2p$, one has
$p\nmid k$, so transfer makes the homology of $B_k(\Sigma_1)$ with
$\Z_{(p)}$-coefficients a direct summand of that of $B_k^\odot(\Sigma_1)$.
Translation in the torus gives a homeomorphism
\begin{equation*}
  B_k^\odot(\Sigma_1)
  \cong \Sigma_1\times B_{k-1}(\Sigma_{1,1}^\circ).
\end{equation*}
Bianchi--Stavrou~\cite{bianchiHomologyConfigurationSpaces2024} show that the
homology of the punctured factor is $p$-torsion-free in this range.

The proof of Theorem~\ref{thmB} is the bulk of the article. The key idea is to
consider the Gysin sequence associated to the puncture-filling inclusion
$j:B_k(\Sigma_{1,1}^\circ)\hookrightarrow B_k(\Sigma_1)$. With
$\Z_{(p)}$-coefficients, the relevant part is
\begin{equation*}
  H_{2p-3}(B_{2p-1}(\Sigma_{1,1}^\circ))
  \xrightarrow{\partial}
  H_{2p-2}(B_{2p}(\Sigma_{1,1}^\circ))
  \xrightarrow{j_*}
  H_{2p-2}(B_{2p}(\Sigma_1)).
\end{equation*}
This sequence relates the punctured and closed homology groups in the critical
weight. Concretely, Bianchi--Stavrou~\cite{bianchiHomologyConfigurationSpaces2024}
show that the $p$-primary torsion of
$H_{2p-2}(B_{2p}(\Sigma_{1,1}^\circ);\Z_{(p)})$ is cyclic of order $p$; we
write $\tau_p$ for a fixed generator and show, for $p\geq5$, that
$j_*(\tau_p)\neq0$.

Equivalently, for $p\geq5$, we show that $\tau_p \notin \operatorname{im}(\partial)$ by considering the natural group actions on both the source and the target of $\partial$, with respect to which the map $\partial$ is equivariant.
After reduction modulo $p$, the strategy is to find a finite equivariance group for which the target line is fixed and then to show that the source has no trivial composition factor; an equivariant image of the source cannot contain the fixed target class.
Let $\Gamma_{1,1}=\operatorname{Mod}(\Sigma_{1,1},\partial\Sigma_{1,1})$ be the mapping class group of the once-bordered torus $\Sigma_{1,1}$ relative to its boundary, let $H=H_1(\Sigma_{1,1};\F_p)$ be the standard $\SL_2(\F_p)$-module, and let $\T_{1,1}(p)=\ker(\Gamma_{1,1}\to\SL_2(\F_p))$ be the mod-$p$ Torelli group.
The Johnson filtration refers to the residual Torelli action on the two-step nilpotent quotient of the surface group; Bianchi--Stavrou's mod-$p$ version is the homomorphism~\cite[Definition~5.8 and Corollary~5.12]{bianchiHomologyConfigurationSpaces2024}
\begin{equation*}
  \xi_\tau^p:\T_{1,1}(p)\longrightarrow\Hom_{\F_p}(H,\Lambda^2H).
\end{equation*}
Its equivariance makes $K^\xi=\ker(\xi_\tau^p)$ normal in $\Gamma_{1,1}$~\cite[Lemma~5.14]{bianchiHomologyConfigurationSpaces2024}, and Bianchi--Stavrou's action calculation shows that $K^\xi$ acts trivially on their mod-$p$ cellular model~\cite[Propositions~4.19 and~5.22]{bianchiHomologyConfigurationSpaces2024}.
Consequently, the boundary is equivariant for the finite quotient $\widetilde G=\Gamma_{1,1}/K^\xi$, which fits into an exact sequence
\begin{equation*}
  1\longrightarrow A\longrightarrow\widetilde G\longrightarrow \SL_2(\F_p)\longrightarrow1.
\end{equation*}
In the particular case of the torus, $A=\operatorname{im}(\xi_\tau^p)$ is either $0$ or isomorphic to $H$. See Sections~\ref{sec:bs-action} and~\ref{sec:equivariance} for details.

Write $\beta_0$ for the non-zero reduction of $\tau_p$ modulo $p$, as in~\eqref{eq:beta0}, and apply equivariance to the reduced boundary $\bar\partial$. For every odd prime $p$, the line $\F_p\{\beta_0\}$ is the trivial one-dimensional representation of $\widetilde G$. For $p\geq5$, the source of $\bar\partial$ has no trivial composition factor. Hence $\beta_0$ cannot lie in the image of $\bar\partial$, which implies that the integral class $\tau_p$ does not lie in the image of $\partial$. For $p=3$, the source does contain trivial composition factors, so there is no such obstruction; Napolitano's computation~\cite{napolitanoCohomologyConfigurationSpaces2003} shows that the integral class lies in the image of $\partial$ and that $j_*(\tau_3)=0$.

Finally, Theorem~\ref{thmC} is proved using Napolitano's Borel--Moore cell complex~\cite{napolitanoCohomologyConfigurationSpaces2003} and the connecting map between the closed and punctured complexes.

\subsection*{Outlook}

We do not produce a complete $p$-local homology computation of $B_{2p}(\Sigma_1)$: we do not know whether there is further $p$-torsion in degree $2p-2$ for $p\geq5$, or whether there is $p$-torsion in degree $2p-1$.

The proofs suggest extensions to higher weights and higher genus.
The marked-point transfer applies only while the covering degree is a $p$-local unit and the punctured factor lies below the Bianchi--Stavrou threshold.

At larger weights, the source of the filling boundary $\partial$ contains additional representation-theoretic families, and trivial composition factors may appear. The exceptional prime $p=3$ already illustrates this limitation at the threshold: the target line remains a trivial $\widetilde G$-representation, but the source contains trivial composition factors, so equivariance no longer separates the target class from the image of $\bar\partial$. Napolitano's computation determines that the class is in fact hit.

% \victor{Dans l'intro, j'ai confondus $\tau_p$ et $\beta_0$ parce que de mon point de vue, le changement de coefficients de $\Z_{(p)}$ à $\F_p$ est un détail technique mineur. T'en penses quoi ? À la rigeur on pourrait mettre une petite footnote en disant qu'en réalité il y a un changement de coefs implicite.}
% \najib{D'Ok, maintenant j'utilise seulement $\tau_p$, en précisant une fois que le même symbole désigne sa réduction modulo $p$. Plus tard on distingue explicitement la classe intégrale $\tau_p$ de sa réduction $\beta_0$.}

In higher genus, the natural equivariance group retains the mod-$p$ Johnson layer in a finite extension
\begin{equation*}
  1\longrightarrow A_g\longrightarrow\widetilde G_g
  \longrightarrow\operatorname{Sp}_{2g}(\F_p)\longrightarrow1,
\end{equation*}
where $A_g$ is the image of the mod-$p$ Johnson homomorphism in $\Hom_{\F_p}(H,\Lambda^2H)$ for $H=H_1(\Sigma_{g,1};\F_p)$.
Unlike in genus one, this image need not vanish or be the standard module; in particular, the $\Lambda^3H$ part is potentially visible and must be controlled in the Johnson-layer extension and target-character arguments.
A higher-genus obstruction must therefore be formulated in terms of $\widetilde G_g$-composition factors, with the symplectic calculation appearing only on the associated graded, rather than in terms of $\operatorname{Sp}_{2g}(\F_p)$ alone.

\subsection*{Acknowledgements}

The authors acknowledge support from the project ANR-22-CE40-0008 SHoCoS, the IdEx University of Paris ANR-18-IDEX-0001, and the Institut Universitaire de France (IUF).
We thank Adela Zhang for pointing us towards this problem and for helpful discussions, Andrea Bianchi and Andreas Stavrou for their confirmation of Remark~\ref{rem:johnson-image}, and Noé Cuneo for assistance.

Large language models supplied by Anthropic and OpenAI were used during the mathematical work for literature discovery and brainstorming, and during the writing for language editing.
An LLM also drew attention to the issue discussed in Remark~\ref{rem:johnson-image}.
Every cited source was checked against the original, and all mathematical claims and proofs were independently rederived and verified by the authors.
The authors take full responsibility for the manuscript.

\section{Background}
\label{sec:inputs}

This section fixes the notation and records the torsion results used throughout the paper.
We also recall the two cellular models and the representation-theoretic facts that enter the proofs of Theorems~\ref{thmB} and~\ref{thmC}.

\subsection{Notation}
\label{sec:notation}

We establish the coefficient, configuration-space, and surface notation used in the rest of the paper.

Throughout this paper, $p$ is an odd prime, $\Z_{(p)}$ denotes the localization of $\Z$ at $p$, and $\F_p = \Z_{(p)}/p$ denotes the field with $p$ elements.
A finitely generated abelian group $M$ has no $p$-torsion if and only if $M\otimes\Z_{(p)}$ is a free $\Z_{(p)}$-module.
Homology without specified coefficients is integral homology, and $\HBM_*$ denotes the Borel--Moore homology of a space.
For a chain complex $C$, we write $H_*(C)$ for its homology.
For a finitely generated abelian group $M$, let $\Tors_p M$ denote its $p$-primary torsion subgroup.

Since $\Z_{(p)}$ is flat over $\Z$, there is for every space $X$ a natural isomorphism $H_i(X;\Z)\otimes_{\Z}\Z_{(p)}\cong H_i(X;\Z_{(p)})$.
When $H_i(X;\Z)$ is finitely generated, it restricts to a natural isomorphism
\begin{equation}
  \Tors_p H_i(X;\Z)
  \xrightarrow{\ \sim\ }
  \Tors H_i(X;\Z_{(p)}),
  \label{eq:localization-torsion}
\end{equation}
onto the torsion submodule of the localized group.
We use~\eqref{eq:localization-torsion} throughout without further comment, and we write the same symbol for a $p$-primary torsion class and for its image in localized homology; naturality makes this compatible with all the induced maps used below.

For a space $X$ and an integer $k$ called the \emph{weight}, we set:
\begin{equation*}
  B_k(X) \coloneqq
  \{(x_1,\ldots,x_k)\in X^k : x_i\neq x_j\text{ for }i\neq j\} \bigm/ \mathfrak S_k.
\end{equation*}

We let $\Sigma_1$ denote the closed torus.
We fix throughout a closed disk $D \subset \Sigma_1$ and let $q \in D$ be its center point.
We let $\Sigma_{1,1} = \Sigma_1 \setminus \mathring{D}$ denote a torus with one boundary component, and let $\Sigma_{1,1}^\circ$ denote its interior, a once-punctured torus; thus $\Sigma_{1,1}^\circ = \Sigma_1\setminus D$.
When only the homeomorphism type is relevant, we identify $\Sigma_{1,1}^\circ$ with the homeomorphic deleted-point torus $\Sigma_1\setminus\{q\}$.
The puncture-filling inclusion is denoted by
\begin{equation*}
  j:B_k(\Sigma_{1,1}^\circ) \hookrightarrow B_k(\Sigma_1).
\end{equation*}

For each fixed weight, the cellular models recalled in Sections~\ref{sec:bs-action} and~\ref{sec:napolitano} have finitely many cells.
Consequently, all integral homology groups of the configuration spaces used below are finitely generated.
We use this when applying the structure theorem over $\Z_{(p)}$ and the universal coefficient theorem.

% \victor{Est-ce qu'on enlève cette notation et on garde simplement: $B_k(\Sigma_1)$, $B_k(\Sigma_{1,1}^\circ)$, et $B_k^\odot(\Sigma_1)$ ?}

\subsection{Known torsion results}
\label{sec:known-torsion}

We collect the known results on $p$-torsion in the homology of $B_k(\Sigma_1)$ and
$B_k(\Sigma_{1,1}^\circ)$, notably those of
Napolitano~\cite{napolitanoCohomologyConfigurationSpaces2003},
Bianchi--Stavrou~\cite{bianchiHomologyConfigurationSpaces2024}, and
Chen--Zhang~\cite{chenModHomologyUnordered2024}.
We then extract the integral punctured-torus torsion group at weight
$2p$ and explain why the two adjacent mod-$p$ excess classes form a single
Bockstein pair.
% \victor{Est-ce qu'on fait appel à un moment au Bockstein de $\beta_0$ ? Je suis pas sûr. Si c'est pas le cas, je vois pas trop l'intérêt de l'introduire.}
% \najib{On n'utilise pas $\alpha_1$ dans l'obstruction elle-même. Elle intervient dans le calcul de l'homologie à coefficients entiers. Elle explique l'excès mod $p$ en degré $2p-1$ et permet de conclure que $t_{2p-1}=0$. Elle marque aussi le seuil $2p$ dans la comparaison bar--CE.}

\begin{proposition}[{Napolitano~\cite[Table~2]{napolitanoCohomologyConfigurationSpaces2003}}]
  \label{prop:napolitano}
  One has
  \begin{equation}
    H^5(B_6(\Sigma_1);\Z)\cong\Z^9\oplus(\Z/2)^8.
    \label{eq:napolitano-b6}
  \end{equation}
  Consequently, by the universal coefficient theorem, $H_4(B_6(\Sigma_1);\Z)$ has no $3$-torsion.
\end{proposition}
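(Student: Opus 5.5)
The first assertion, the isomorphism \eqref{eq:napolitano-b6}, is quoted directly from Napolitano's Table~2~\cite{napolitanoCohomologyConfigurationSpaces2003}, so there is nothing to prove there. The only work is the deduction that $H_4(B_6(\Sigma_1);\Z)$ has no $3$-torsion. For this I will use the universal coefficient theorem for cohomology.

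The universal coefficient theorem gives a natural split short exact sequence
\begin{equation*}
  0\longrightarrow \Ext^1_{\Z}\bigl(H_4(B_6(\Sigma_1);\Z),\Z\bigr)\longrightarrow H^5(B_6(\Sigma_1);\Z)\longrightarrow \Hom_{\Z}\bigl(H_5(B_6(\Sigma_1);\Z),\Z\bigr)\longrightarrow 0.
\end{equation*}
As recorded in Section~\ref{sec:notation}, the integral homology groups here are finitely generated, because Napolitano's cell complex has finitely many cells in each weight. So I can write $H_4(B_6(\Sigma_1);\Z)\cong\Z^r\oplus T$ with $T$ finite.

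Then $\Ext^1_{\Z}(H_4,\Z)\cong\Ext^1_{\Z}(T,\Z)\cong T$, non-canonically. On the other hand, $\Hom(H_5,\Z)$ is free, so the sequence identifies $T$ with the torsion subgroup of $H^5(B_6(\Sigma_1);\Z)$. By \eqref{eq:napolitano-b6}, this torsion subgroup is $(\Z/2)^8$. Hence $T\cong(\Z/2)^8$, and in particular $\Tors_3 H_4(B_6(\Sigma_1);\Z)=0$.

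I expect no real obstacle. The only points to check are the finite generation hypothesis, which lets us identify $\Ext^1(T,\Z)$ with $T$, and matching the degree conventions: torsion in $H^5$ corresponds to torsion in $H_4$, not $H_5$. Both are standard.
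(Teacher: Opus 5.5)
Your proposal is correct and matches the paper's argument: the paper (in the proof of Proposition~\ref{prop:prime-three}) uses the same universal coefficient sequence for $H^5(B_6(\Sigma_1);\Z)$, observes that the $\Hom$ term is free, and uses finite generation to identify the $\Ext^1$ term with the torsion subgroup of $H_4(B_6(\Sigma_1);\Z)$. It then reads off the absence of $3$-torsion from Napolitano's $(\Z/2)^8$.
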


\begin{theorem}[{Chen--Zhang~\cite[Theorem~1.1]{chenModHomologyUnordered2024}}]
  \label{thm:chen-zhang}
  If $k\leq p$, then $H_*(B_k(\Sigma_1);\Z)$ has no $p$-torsion.
\end{theorem}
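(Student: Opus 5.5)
The statement is Chen--Zhang's theorem, so the plan below reconstructs it from the tools this paper uses. I split into the cases $k<p$ and $k=p$. The guiding principle is to reduce to the once-punctured torus, where Bianchi--Stavrou give complete control of the mod-$p$ homology below weight $2p$. I assume, from their results, that for $k<2p$ the groups $H_*(B_k(\Sigma_{1,1}^\circ);\Z)$ have no $p$-torsion, equivalently that their mod-$p$ and rational Betti numbers agree.

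\emph{Case $k<p$.} I would use the marked-point covering $B_k^\odot(\Sigma_1)\to B_k(\Sigma_1)$, which has $k$ sheets. Since $p\nmid k$, composing the transfer with the projection is multiplication by $k$, a unit in $\Z_{(p)}$. Hence $H_*(B_k(\Sigma_1);\Z_{(p)})$ is a direct summand of $H_*(B_k^\odot(\Sigma_1);\Z_{(p)})$. Translating the marked point to $q$ gives $B_k^\odot(\Sigma_1)\cong\Sigma_1\times B_{k-1}(\Sigma_{1,1}^\circ)$. By Künneth, and because $H_*(\Sigma_1)$ is free, the right-hand side is $p$-torsion-free whenever $B_{k-1}(\Sigma_{1,1}^\circ)$ is. That holds since $k-1<2p$, so a summand of a free $\Z_{(p)}$-module is free and this case is done. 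The same argument gives Theorem~\ref{thmA} for $p<k<2p$.

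\emph{Case $k=p$.} Here the transfer fails, so I would use the puncture-filling Gysin sequence with $\Z_{(p)}$-coefficients. Its closed stratum consists of configurations containing $q$, which is homeomorphic to $B_{p-1}(\Sigma_{1,1}^\circ)$ and has an oriented normal bundle of rank $2$. The sequence reads
\begin{equation*}
  H_{i-1}(B_{p-1}(\Sigma_{1,1}^\circ))\xrightarrow{\partial}H_i(B_p(\Sigma_{1,1}^\circ))\xrightarrow{j_*}H_i(B_p(\Sigma_1))\to H_{i-2}(B_{p-1}(\Sigma_{1,1}^\circ))\xrightarrow{\partial}\cdots
\end{equation*}
All punctured terms are free, since $p,p-1<2p$. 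So $H_i(B_p(\Sigma_1);\Z_{(p)})$ is free if and only if $\operatorname{coker}\partial$ is free, that is, if and only if the rank of $\bar\partial$ over $\F_p$ equals the rank of $\partial\otimes\Q$ in every degree.

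\emph{Main obstacle.} The hard step is this rank comparison for $\partial$. Geometrically, $\partial$ adds a point near the puncture and sweeps it once around the boundary circle. In Bianchi--Stavrou's cellular or $E_2$-model of the bordered torus, this is a product with the degree-one class of the boundary loop. I would first compute $\partial$ explicitly in their mod-$p$ basis and compare with the rational basis. The only way the ranks can differ is through a power operation or Bockstein appearing in weight $\ge p$ on the target side. Weight $p$ is exactly where the first Dyer--Lashof-type class lives, but Bianchi--Stavrou's description should show no new torsion-related classes there. Failing a direct computation, I would fall back on the equality of Euler characteristics and of total mod-$p$ and rational dimensions for $B_p(\Sigma_1)$, computed independently from Napolitano's Borel--Moore complex or Bödigheimer--Cohen--Taylor-type counts. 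Equal total dimensions force $\dim_{\F_p}=\dim_\Q$ in every degree, because these can only differ by nonnegative torsion contributions.
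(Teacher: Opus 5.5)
Your case $k<p$ is correct: it is the marked-point transfer of Lemmas~\ref{lem:direct-summand} and~\ref{lem:split-marked} combined with Theorem~\ref{thm:punctured-threshold}, and that argument works for every $k<2p$ with $p\nmid k$. The paper does not reprove this statement; it imports it from Chen--Zhang, whose low-weight spectral Lie analysis treats weight $p$ directly.

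The genuine gap is the case $k=p$, which is the real content of the theorem. Your reduction there is sound. The punctured groups in weights $p-1$ and $p$ are free over $\Z_{(p)}$, and $\ker\partial$ is free, so the Gysin extension splits. Hence freeness of $H_i(B_p(\Sigma_1);\Z_{(p)})$ is equivalent to $\operatorname{rank}_{\F_p}\bar\partial=\operatorname{rank}_{\Q}(\partial\otimes\Q)$ in every degree. But you never prove this rank equality, and your heuristic for it looks in the wrong place. The source and target of $\partial$ are free with equal mod-$p$ and rational dimensions, so there are no extra mod-$p$ classes on the punctured side to rule out. What could fail is that the integral matrix of $\partial$ has an elementary divisor divisible by $p$, for instance $\partial$ acting as multiplication by $p$ on a rank-one summand. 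No dimension count on the punctured side detects that. Excluding it requires actually computing the tube map, i.e.\ the action of the boundary loop from weight $p-1$ to weight $p$, integrally or modulo $p$. Weight $p$ is exactly where mod-$p$ power operations first enter the Lie-algebra description of configuration spaces, and controlling them is the substance of Chen--Zhang's proof.

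Your fallback does not close the gap either. The Euler characteristic is the same over every field, so it says nothing about torsion. The total dimension of $H_*(B_p(\Sigma_1);\F_p)$ is not available independently. B\"odigheimer--Cohen--Taylor's field-coefficient computation gives untwisted coefficients on a surface only over $\F_2$; at odd primes it computes homology with sign-twisted coefficients. Napolitano's tables stop at weight $6$, so they settle only $p=3$ and $p=5$, and evaluating his complex modulo $p$ in weight $p$ for all $p$ is the original problem. As written, the case $k=p$ assumes what it needs to prove.
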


\begin{theorem}[{Consequence of Bianchi--Stavrou~\cite[Corollaries~C.5--C.7]{bianchiHomologyConfigurationSpaces2024}}]
  \label{thm:punctured-threshold}
  For every $k\leq2p$,
  \begin{equation*}
    \Tors_p H_i(B_k(\Sigma_{1,1}^\circ);\Z)
    \cong
    \begin{cases}
      \Z/p, & \text{if } (k,i)=(2p,2p-2),\\
      0, & \text{otherwise}.
    \end{cases}
  \end{equation*}
\end{theorem}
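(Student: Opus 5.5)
This theorem is a consequence of Bianchi--Stavrou, so the plan is to extract it from their mod-$p$ computation together with a Bockstein and universal-coefficient argument.

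Bianchi--Stavrou compute $H_*(B_k(\Sigma_{g,1});\F_p)$ for the once-bordered surface, which is homotopy equivalent to the once-punctured one. They express it as the homology of an explicit bigraded complex and compare it with the rational answer, which is known by Drummond-Cole--Knudsen. By the universal coefficient theorem, the integral homology groups are finitely generated (see Section~\ref{sec:notation}). Hence the total $p$-torsion in weight $k$ is detected by the \emph{excess}
\[
\dim_{\F_p} H_i(B_k;\F_p)-\operatorname{rank} H_i(B_k;\Z).
\]
This excess equals $t_i+t_{i-1}$, where $t_i$ is the number of cyclic $p$-primary summands in $H_i(B_k;\Z)$.

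The first step is to read off from their Corollaries~C.5--C.7 that for $k\le 2p$ the excess vanishes in every bidegree except $(k,i)=(2p,2p-2)$ and $(2p,2p-1)$. At each of these two bidegrees the excess is exactly one. Below weight $2p$ everything is torsion-free, because the first non-trivial Dyer--Lashof/Bockstein phenomenon on a surface with boundary appears only in weight $2p$. This happens via the operation $\beta Q^1$ applied to a class of weight $2$, or a class in weight $2p$ built from a degree-one class.

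The second step is to solve the system $t_i+t_{i-1}=e_i$ in weight $2p$, using $t_i=0$ for $i$ small. This yields $t_{2p-2}=1$ and $t_{2p-1}=0$. In other words, the two adjacent mod-$p$ excess classes form a single Bockstein pair $\beta(\alpha_1)=\beta_0$, up to a unit.

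It remains to determine the order of the torsion, i.e.\ to rule out $\Z/p^r$ with $r\ge2$. I would do this by showing that the first Bockstein is non-zero on the excess class in degree $2p-1$. Bianchi--Stavrou's description identifies the degree-$(2p-1)$ class as one on which $\beta$ acts non-trivially. By the Bockstein spectral sequence, a class detected by $\beta_1$ corresponds to a $\Z/p$ summand and not to a higher $p$-power summand. This gives $\Tors_p H_{2p-2}\cong\Z/p$ and vanishing $p$-torsion elsewhere.

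The main obstacle is the bookkeeping in the first step. One must translate Bianchi--Stavrou's presentation, with its bigradings, generators in terms of $H_1(\Sigma_{1,1})$, and genus-one specializations, into precise dimension counts in every bidegree with $k\le 2p$. One must also check them against the rational Betti numbers. I would first isolate the contribution of the free part and then verify that the only extra $\F_p$-classes come from the single $p$-th power phenomenon in weight $2p$. Establishing the first Bockstein nonvanishing, rather than merely the excess, is the second delicate point; there I would cite their explicit Bockstein computation.
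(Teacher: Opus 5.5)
Your proposal is correct and follows essentially the paper's route: use Bianchi--Stavrou's fake-basechange formula (Corollary~C.6) to see that for $k\le 2p$ the only mod-$p$ excess is $e_{2p-2}=e_{2p-1}=1$, solve $e_i=t_i+t_{i-1}$ to get $t_{2p-2}=1$ and $t_{2p-1}=0$, and invoke their Bockstein result (Corollary~C.5, exponent $p$) to identify the torsion group as $\Z/p$. Two small corrections: finite generation comes from the finite cellular models rather than from the universal coefficient theorem, and you should check that the fake-basechange corollary applies in genus one, which holds because $1\le p-2$ for every odd $p$.
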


By Theorem~\ref{thm:punctured-threshold}, the $p$-primary torsion subgroup of
$H_{2p-2}(B_{2p}(\Sigma_{1,1}^\circ);\Z)$ is cyclic of order $p$.
It has $p-1$ generators; let us fix one of them once and for all, written
\begin{equation}
  \tau_p\in H_{2p-2}(B_{2p}(\Sigma_{1,1}^\circ);\Z_{(p)})
  \label{eq:tau-p}
\end{equation}
under the identification~\eqref{eq:localization-torsion}, and let us write
\begin{equation}
  \beta_0=\overline{\tau_p}
  \in H_{2p-2}(B_{2p}(\Sigma_{1,1}^\circ);\F_p)
  \label{eq:beta0}
\end{equation}
for its non-zero reduction.
Nothing below depends on this choice.  Indeed, the other generators are the
$\lambda\tau_p$ with $\lambda\in(\Z/p)^\times$, and if $f$ is any homomorphism
defined on the group in~\eqref{eq:tau-p}, then $p\tau_p=0$ forces
$pf(\tau_p)=0$, so $f(\tau_p)$ lies in an $\F_p$-vector space and
$f(\lambda\tau_p)=\lambda f(\tau_p)$ vanishes if and only if $f(\tau_p)$ does.
Thus the assertions $j_*(\tau_p)=0$ and $\beta_0\neq0$ used below hold for one
generator if and only if they hold for all of them.

Bianchi--Stavrou's Bockstein theorem~\cite[Corollary~C.5]{bianchiHomologyConfigurationSpaces2024}
shows that the $p$-primary torsion in the integral homology of
$B_{2p}(\Sigma_{1,1}^\circ)$ has exponent $p$.
We also denote the torsion and excess dimensions of $B_{2p}(\Sigma_{1,1}^\circ)$ by:
\begin{equation*}
  t_i \coloneqq
  \dim_{\F_p}\operatorname{Tors}_pH_i(B_{2p}(\Sigma_{1,1}^\circ);\Z),
  \qquad
  e_i \coloneqq
  \dim_{\F_p}H_i(B_{2p}(\Sigma_{1,1}^\circ);\F_p)
  -\dim_{\Q}H_i(B_{2p}(\Sigma_{1,1}^\circ);\Q).
\end{equation*}
Then universal coefficients give $e_i=t_i+t_{i-1}$.

\begin{proof}[Derivation from Bianchi--Stavrou]
  We recall the extraction because the bookkeeping is used later.
  Since
  $1\le p-2$ for all odd $p$, Bianchi--Stavrou's fake-basechange result~\cite[Corollary~C.6]{bianchiHomologyConfigurationSpaces2024} applies to genus $1$.
  In the notation of that corollary, there is an isomorphism of weighted
  bigraded $\F_p$-vector spaces
  \begin{equation*}
    H_*(B_\bullet(\Sigma_{1,1}^\circ);\F_p)
    \cong
    \bigl(H_*(B_\bullet(\Sigma_{1,1}^\circ);\Q)\bigr)^{\F_p}
    \otimes_{\F_p}
    \F_p[\beta_0,\alpha_1,\beta_1,\alpha_2,\beta_2,\ldots],
  \end{equation*}
  where the superscript denotes a bigraded $\F_p$-vector space having in each
  bidegree the same dimension as the rational factor.
  The polynomial factor has generators
  \begin{equation*}
    \alpha_i\in H_{2p^i-1}(B_{2p^i}(\Sigma_{1,1}^\circ);\F_p),
    \quad i\geq1,
    \qquad
    \beta_i\in H_{2p^{i+1}-2}(B_{2p^{i+1}}(\Sigma_{1,1}^\circ);\F_p),
    \quad i\geq0.
  \end{equation*}
  Thus the only nonconstant polynomial generators of weight $\le2p$ are
  $\beta_0$, in degree $2p-2$, and $\alpha_1$, in degree $2p-1$.
  No nonconstant product occurs in weight $\le2p$.

  For $k<2p$, only the constant of the polynomial factor contributes, so the
  mod-$p$ Betti numbers equal the rational Betti numbers in every degree and
  universal coefficients force no $p$-torsion.
  At $k=2p$, each of $\beta_0$ and $\alpha_1$ occurs once, tensored with the
  one-dimensional weight-zero, degree-zero part of the rational factor.
  Hence $e_{2p-2}=e_{2p-1}=1$ and $e_i=0$ for
  $i\notin\{2p-2,2p-1\}$.
  Bianchi--Stavrou's low-degree no-torsion result~\cite[Corollary~C.7]{bianchiHomologyConfigurationSpaces2024} gives $t_i=0$ for $i<2p-2$.
  Hence $t_{2p-2}=1$, then $t_{2p-1}=0$, and all higher $t_i$ vanish by the excess equation and the fact that $B_k(\Sigma_{1,1}^\circ)$ has homological dimension $k$.
  Since the torsion has exponent $p$, the unique torsion group is $\Z/p$.
\end{proof}

The mod-$p$ classes $\beta_0$ and $\alpha_1$ should be read as a Bockstein
pair, not as two independent integral features. The integral feature is the
single summand $\Z/p\{\tau_p\}$ in degree $2p-2$. The mod-$p$ class
$\beta_0$ is its reduction, and the mod-$p$ class $\alpha_1$ in degree
$2p-1$ is the universal-coefficient shadow of the same torsion group. Thus
$t_{2p-2}=1$ and $t_{2p-1}=0$, even though the mod-$p$ excess appears in
both adjacent degrees.

\subsection{The Bianchi--Stavrou equivariant action model}
\label{sec:bs-action}

We recall the part of Bianchi--Stavrou's cellular model needed for the
equivariant obstruction in the proof of Theorem~\ref{thmB}.
The model first identifies the punctured-torus cellular chains with a two-sided
bar complex, then makes the mapping-class action explicit on its surface
factor.  The quadratic part of this action defines a finite Johnson quotient;
filtering it out later reduces the source calculation to the standard
$\SL_2(\F_p)$-action.
For $r\geq1$, let $\mathcal R_r=\bigvee_{i=1}^rS^1$, based at the wedge point.
The weighted \emph{unordered Moriyama} algebra
$\operatorname{UMor}_\bullet(r)$ is assembled from the reduced cellular
chains of the one-point compactifications of
$B_n(\mathcal R_r\setminus\{*\})$, in weight $-n$~\cite[Definition~3.4]{bianchiHomologyConfigurationSpaces2024}.
Its differential is zero~\cite[Proposition~3.3]{bianchiHomologyConfigurationSpaces2024},
and superposition of configurations gives an
isomorphism of weighted rings~\cite[Proposition~3.9]{bianchiHomologyConfigurationSpaces2024}
\begin{equation*}
  \operatorname{UMor}_\bullet(r)
  \cong
  \Lambda_{\Z}(x_1,\ldots,x_r)
  \otimes
  \Gamma_{\Z}(y_1,\ldots,y_r),
\end{equation*}
where $x_i$ represents one point on the $i$th open arc and $y_i^{[m]}$
represents $2m$ points there.  Thus $x_i$ has weight $-1$ and
$y_i^{[m]}$ has weight $-2m$; their regraded degrees are equal to their
weights.

We now specialize to the once-bordered torus.  Put
\begin{equation*}
  \Gamma_{1,1} \coloneqq
  \operatorname{Mod}(\Sigma_{1,1},\partial\Sigma_{1,1}),
  \qquad
  G \coloneqq
  \pi_1(\Sigma_{1,1})=\langle\gamma_a,\gamma_b\rangle,
  \qquad
  H_{\Z} \coloneqq
  H_1(\Sigma_{1,1};\Z).
\end{equation*}
Bianchi--Stavrou stratify the one-point compactification of $B_n(\Sigma_{1,1}^\circ)$ by
records $t=(b,P,v)$.  Such a record describes $b$ vertical bars in a
rectangle, with $P_i\geq1$ points on the $i$th bar, and $v_a,v_b\geq0$
points on the two open arcs of a bouquet representing
$\gamma_a,\gamma_b$~\cite[Definitions~4.1--4.3 and Proposition~4.4]{bianchiHomologyConfigurationSpaces2024}.
The corresponding cell has Borel--Moore degree $n+b$, where
$\sum_iP_i+v_a+v_b=n$.  It is placed in weight $-n$ and in regraded
homological degree $(n+b)-2n=b-n$~\cite[Notation~4.5]{bianchiHomologyConfigurationSpaces2024}.
Let
\begin{equation}\label{eq:bs-cellular-complex}
  \cC_{\Z}
  \coloneqq
  \bigoplus_{n\geq0}
  \widetilde C_*^{\mathrm{cell}}
  \bigl((B_n(\Sigma_{1,1}^\circ))^+;\Z\bigr),
\end{equation}
where the displayed chain degree is this regraded degree.

Let
\begin{equation*}
  \cA\coloneqq\operatorname{UMor}_\bullet(1)
  =\Lambda_{\Z}(x)\otimes\Gamma_{\Z}(y),
  \qquad
  \cU\coloneqq\operatorname{UMor}_\bullet(2)
  =\Lambda_{\Z}(x_a,x_b)\otimes\Gamma_{\Z}(y_a,y_b).
\end{equation*}
The boundary word $\zeta=[\gamma_a,\gamma_b]\in G$ induces a ring homomorphism
$\cA\to\cU$ and hence makes $\cU$ a left
$\cA$-module.  In genus one it is given by~\cite[Definition~4.12,
Lemma~4.13, and Section~6.1]{bianchiHomologyConfigurationSpaces2024}
\begin{equation}
  x\longmapsto0,
  \qquad
  y\longmapsto2x_ax_b,
  \qquad
  y^{[m]}\longmapsto0\quad(m\geq2).
  \label{eq:boundary-word-action}
\end{equation}
This is the genus-one specialization of the general assignment
$y^{[m]}\mapsto\Omega_{2m}$: here $\Omega_2=2x_ax_b$, while
$\Omega_{2m}\in\Lambda^{2m}H_{\Z}$ vanishes for $m\geq2$ because
$\Lambda^{>2}H_{\Z}=0$ in genus one.
Bianchi--Stavrou identify the cellular complex with the reduced two-sided bar
complex~\cite[Proposition~6.1]{bianchiHomologyConfigurationSpaces2024}
\begin{equation}
  \cC_{\Z}
  \cong
  \overline B_\bullet(\Z,\cA,\cU),
  \qquad
  \overline B_b(\Z,\cA,\cU)
  \coloneqq
  \Z\otimes\bar\cA^{\otimes b}\otimes\cU,
  \label{eq:bs-bar-complex}
\end{equation}
where $\bar\cA$ is the augmentation ideal; a term of bar length $b$
and weight $-n$ has regraded chain degree $b-n$.  If
$P_i=2m_i+\epsilon_i$ and $v_j=2m'_j+\epsilon'_j$, with
$\epsilon_i,\epsilon'_j\in\{0,1\}$, then $t=(b,P,v)$ corresponds to
\begin{equation*}
  e_{(b,P,v)}
  \longleftrightarrow
  1\otimes
  x^{\epsilon_1}y^{[m_1]}\otimes\cdots\otimes
  x^{\epsilon_b}y^{[m_b]}\otimes
  \prod_{j\in\{a,b\}}x_j^{\epsilon'_j}y_j^{[m'_j]}.
\end{equation*}
The initial bar face vanishes.  The other faces merge adjacent bar entries
or apply~\eqref{eq:boundary-word-action} to the last entry before multiplying
the surface factor.  Geometrically these are, respectively, collisions of
adjacent bars and the rightmost bar reaching the
bouquet~\cite[Proposition~4.14]{bianchiHomologyConfigurationSpaces2024}.

Bianchi--Stavrou's cellular approximation gives a $\Gamma_{1,1}$-action by chain
maps on $\cC_{\Z}$.  Under~\eqref{eq:bs-bar-complex}, it fixes every
bar entry and acts on $\cU$ through the induced action on
$G$~\cite[Propositions~4.18 and~4.19]{bianchiHomologyConfigurationSpaces2024}.

To describe the action on $\cU$, write $[w]\in H_{\Z}$ for the
abelianization of $w\in G$, and let
\begin{equation*}
  (-)_x:H_{\Z}\longrightarrow\Z\{x_a,x_b\},
  \qquad
  (-)_y:H_{\Z}\longrightarrow\Z\{y_a,y_b\}
\end{equation*}
send $[\gamma_i]$ to $x_i$ and $y_i$, respectively, for $i\in\{a,b\}$,
and extend $(-)_x$ to exterior powers.  The \emph{content map} is
\begin{equation*}
  c:\Z[G]\longrightarrow\Lambda H_{\Z}.
\end{equation*}
It is the $\Z$-linear extension of the multiplicative map on $G$ determined by
$c(\gamma_i)=1+[\gamma_i]$ and
$c(\gamma_i^{-1})=1-[\gamma_i]$, with products taken in the order of the
letters in a word.  This is consistent with cancellation because
$(1+[\gamma_i])(1-[\gamma_i])=1$ in $\Lambda H_{\Z}$.
Let $c_2(w)\in\Lambda^2H_{\Z}$ be the exterior-degree-two component of
$c(w)$.  For $\phi\in\Gamma_{1,1}$, Bianchi--Stavrou's action formula on
$\cU$ is~\cite[Theorem~3.13]{bianchiHomologyConfigurationSpaces2024}
\begin{equation}
  x_i\longmapsto[\phi(\gamma_i)]_x,
  \qquad
  y_i\longmapsto[\phi(\gamma_i)]_y+[c_2(\phi(\gamma_i))]_x.
  \label{eq:bs-action}
\end{equation}
Here and below $i\in\{a,b\}$.  The second summand in the image of $y_i$ is
the quadratic correction term.  Although $[\phi(\gamma_i)]_x$ and
$[\phi(\gamma_i)]_y$ have the same coefficients in the respective bases,
they lie in distinct linear subspaces of $\cU$.
The action is by ring automorphisms and also determines every divided power:
since $m!y_i^{[m]}=y_i^m$ and $\cU$ is torsion-free,
$\phi(y_i^{[m]})$ is the unique element whose product with $m!$ is
$\phi(y_i)^m$.  Put $\cU_{\F_p}=\cU\otimes\F_p$, and use the same symbols
$x_i,y_i$ for the reductions of the integral generators.

Throughout the article, we set:
\begin{equation*}
  H \coloneqq
  H_{\Z}\otimes\F_p \cong H_1(\Sigma_{1,1};\F_p),
  \quad
  \T_{1,1}(p)
  \coloneqq
  \ker\bigl(\Gamma_{1,1}\longrightarrow\SL(H)\bigr).
\end{equation*}
The group $\T_{1,1}(p)$ is the mod-$p$ Torelli subgroup.  The quadratic corrections define the
homomorphism~\cite[Definition~5.8 and Corollary~5.12]{bianchiHomologyConfigurationSpaces2024}
\begin{equation*}
  \xi_\tau^p:\T_{1,1}(p)
  \longrightarrow
  \Hom_{\F_p}(H,\Lambda^2H),
  \qquad
  \xi_\tau^p(\phi)([\gamma_i])=c_2(\phi(\gamma_i))\bmod p.
\end{equation*}
Since $\xi_\tau^p$ takes values in a mod-$p$ group, the next display is an
identity in $\cU_{\F_p}$, and $(-)_x$ there denotes the reduction modulo
$p$ of the map introduced above.  For $\phi\in\T_{1,1}(p)$ one has
$[\phi(\gamma_i)]\equiv[\gamma_i]\bmod p$, so formula~\eqref{eq:bs-action}
reduces to
\begin{equation*}
  \phi(x_i)=x_i,
  \qquad
  \phi(y_i)=y_i+[\xi_\tau^p(\phi)([\gamma_i])]_x
  \qquad\text{in }\cU_{\F_p}.
\end{equation*}
The corresponding integral statement in $\cU$ is false: a mod-$p$ Torelli
element need only fix $x_i$ modulo $p$.
Bianchi--Stavrou prove that $\xi_\tau^p$ is
$\Gamma_{1,1}$-equivariant~\cite[Lemma~5.14]{bianchiHomologyConfigurationSpaces2024},
so
\begin{equation*}
  K^\xi\coloneqq\ker(\xi_\tau^p)
\end{equation*}
is normal in $\Gamma_{1,1}$.  Their proof of Proposition~5.22 checks all divided
powers $y_i^{[m]}$, including $m\geq p$, and shows that $K^\xi$ acts
trivially on $\cU_{\F_p}$.  Since the bar entries are fixed,
$K^\xi$ acts trivially on the entire mod-$p$ cellular complex.

\subsection{Napolitano's cellular decomposition}
\label{sec:napolitano}

Napolitano~\cite{napolitanoCohomologyConfigurationSpaces2003} constructs a Borel--Moore cellular decomposition for the homology of $B_k(\Sigma_1)$ which generalizes the Fuks--Vainshtein~\cite{fuksCohomologiesGroupCOS1970,vainsteinCohomologyBraidGroups1978} model for the plane.
We will use this model to prove Theorem~\ref{thmC}.
Its index records precisely the data controlling the bottom Borel--Moore cells
and their closed--punctured boundary incidences.
Let us now recall it.

Fix a base point on the circle, e.g., the north pole.
The \emph{index} of a configuration $\xi \in B_k(S^1)$ is $\ind(\xi) \coloneqq (k-v, v)$, where $v = 1$ if $\xi$ contains the base point, and $v = 0$ otherwise.

Now, consider the projection $\pi : S^1 \times \R \to \R$.
Given an element $\xi \in B_k(S^1 \times \R)$, we can consider the projection $\pi(\xi)$, which consists of $d$ ordered points; the fiber over any of these points is an unordered configuration $\xi_i$ of $k_i$ points in $S^1$, with $\sum k_i = k$.
We can then define $\ind(\xi) \coloneqq (\ind(\xi_1), \ldots, \ind(\xi_d))$.
This defines a cellular decomposition of $(B_k(S^1 \times \R))^+$, with boundaries as indicated in~\cite[Section~2.3]{napolitanoCohomologyConfigurationSpaces2003}.

Finally, the torus $\Sigma_1$ is the union of a circle $S^1$ and a cylinder $S^1 \times \R$, so a configuration $\xi$ on $\Sigma_1$ is the union of a configuration $\xi'$ on the circle and a configuration $\xi''$ on the cylinder.
The \emph{index} of $\xi$ is
\begin{equation}\label{eq:index}
  \ind(\xi) \coloneqq (\ind(\xi'), \ind(\xi'')) = (e, v; (m_1, v_1), \dots, (m_d, v_d)),
\end{equation}
where $v_i \in \{0,1\}$, $m_i \in \mathbb{N}$, $m_i+v_i \geq 1$, and $e + v + \sum m_i + \sum v_i = k$.
This, again, defines a cellular decomposition of $(B_k(\Sigma_1))^+$, with boundaries as indicated in~\cite[Section~2.3]{napolitanoCohomologyConfigurationSpaces2003}.
The Borel--Moore degree of the cell associated to the index in Equation~\eqref{eq:index} is $n = e + \sum_i (m_i+1)$.
We will use it in Section~\ref{sec:high-degree}, where we will also fix orientations and signs for this cellular decomposition.

\begin{remark}
  Both Napolitano and Bianchi--Stavrou use an ordered-bar stratification of the same Fuks--Vainshtein type.
  Napolitano's model~\cite[Sections~2.3--2.4]{napolitanoCohomologyConfigurationSpaces2003} treats the closed surface directly and retains the boundary faces arising from pinned points, the bouquet vertex, and the two ends of the cylinder.
  In the Bianchi--Stavrou model~\cite[Propositions~4.9 and~4.14]{bianchiHomologyConfigurationSpaces2024}, the adjacent-bar mergers have the same signed-shuffle incidence coefficients (the parity-binomial numbers denoted $P(a,b)$ in Section~\ref{sec:high-degree}) while the remaining surface contribution is packaged in the $\operatorname{UMor}_\bullet(2)$-factor and a single term determined by the symplectic class.
  This reorganization makes the bar description and the mapping-class-group action explicit.
  We do not identify the two complexes term by term: below we use Napolitano's model for the closed-torus calculation and the Bianchi--Stavrou model for the punctured, equivariant calculation.
\end{remark}

\subsection{Representation theory of \texorpdfstring{$\SL_2(\F_p)$}{SL\_2(F\_p)}}
\label{sec:rep-conventions}

We record the representation-theoretic conventions and the two elementary facts used later to separate the source and target of the mod-$p$ Gysin map.
We refer to~\cite{humphreysRepresentationsSl21975} for the representation theory of $\SL_2(\F_p)$.
As before, let $H = H_1(\Sigma_{1,1}; \F_p)$ be the standard two-dimensional representation of $\SL_2(\F_p)$.
For $0\leq r\leq p-1$, let the $r$th divided power of $H$ (of dimension $r+1$) be:
\begin{equation*}
  V_r \coloneqq \Gamma^r H.
\end{equation*}
For $0\leq r\leq p-1$, the canonical map from symmetric tensors to symmetric coinvariants is an $\SL_2(\F_p)$-equivariant isomorphism ($r!$ is invertible in
$\F_p$).
Thus $V_r\cong\operatorname{Sym}^r H$ is the simple module of highest weight $r$.
In particular, $V_0$ is trivial and $V_{p-1}$ is the Steinberg module.
Write $\omega$ for the nondegenerate $\SL_2(\F_p)$-invariant intersection pairing on $H$.
The map $v\mapsto\omega(v,-)$ identifies $H$ equivariantly with $H^\vee$, and hence
$V_r^\vee\cong V_r$ in this range.

For $1\leq r\leq p-2$, the modular Clebsch--Gordan formula gives:
\begin{equation}
  [H\otimes\Gamma^rH]=[\Gamma^{r+1}H]+[\Gamma^{r-1}H]
  \label{eq:clebsch-gordan}
\end{equation}
in the Grothendieck group; see~\cite[Lemma~1.3]{DotyHenke2005} (where $\Gamma^r H = L(r)$).

We shall also use two standard facts about modular representations.
If $P\trianglelefteq E$ is a normal $p$-subgroup of a finite group, then $P$ acts trivially on every simple $\F_p[E]$-module; equivalently, every such simple module is inflated from $E/P$~\cite[Section~8.3, Corollary to Proposition~26]{serreLinearRepresentations1977}.
Here \emph{inflated} means that an $E/P$-action is pulled back along the
quotient map $E\twoheadrightarrow E/P$, so that $P$ acts trivially.
Moreover, if $V$ and $W$ are finite-dimensional $E$-modules over a field and
$\phi:V\to W$ is $E$-linear, then the absence of a trivial composition factor
in $V$ implies that $\operatorname{im}\phi$ contains no non-zero $E$-fixed
vector: the image is a quotient of $V$, while a fixed vector spans a trivial
submodule.

\section{Marked-point transfer and Theorem~\ref{thmA}}
\label{sec:proof-thma}

Let us now prove Theorem~\ref{thmA} using the marked-point transfer.
The first lemma splits the homology of the closed configuration space off the
marked cover, and the second identifies that cover with a product whose
punctured factor is below the Bianchi--Stavrou torsion threshold.

\begin{definition}
  Let $X$ be a topological space. The marked-point unordered configuration space of $X$ is
  \begin{equation*}
    B_k^\odot(X)
    \coloneqq
    \{(Q,x)\in B_k(X)\times X:x\in Q\}.
  \end{equation*}
\end{definition}

\begin{lemma}
  \label{lem:direct-summand}
  If $p\nmid k$, then $H_*(B_k(\Sigma_1);\Z_{(p)})$ is a direct summand of $H_*(B_k^\odot(\Sigma_1);\Z_{(p)})$.
\end{lemma}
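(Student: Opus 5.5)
The plan is to use the standard transfer for the finite covering $\pi:B_k^\odot(\Sigma_1)\to B_k(\Sigma_1)$, $(Q,x)\mapsto Q$.

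First, $\pi$ is a $k$-sheeted covering map. The fibre over $Q$ is the set of $k$ points of $Q$. Local triviality follows from the fact that near a configuration $Q$ one can choose disjoint disk neighbourhoods of its points. Over such a neighbourhood, $\pi^{-1}$ splits into $k$ sheets, each labelled by the disk containing the marked point. This can also be seen as $B_k^\odot(\Sigma_1)\cong F_k(\Sigma_1)/\mathfrak S_{k-1}$ covering $F_k(\Sigma_1)/\mathfrak S_k$, with $\mathfrak S_{k-1}$ the stabilizer of the first coordinate. This group-theoretic description gives the covering property immediately, since $\mathfrak S_k$ acts freely on the ordered configuration space.

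Second, for any finite $k$-sheeted covering there is a transfer
\[
\operatorname{tr}:H_*(B_k(\Sigma_1);\Z_{(p)})\to H_*(B_k^\odot(\Sigma_1);\Z_{(p)}).
\]
On singular chains, it sends a simplex $\sigma$ to the sum of its $k$ lifts; lifts exist and are unique given a starting sheet, because simplices are simply connected. This is a chain map. The composite $\pi_*\circ\operatorname{tr}$ is multiplication by $k$ already on chains, since each lift projects back to $\sigma$.

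Third, since $p\nmid k$, $k$ is a unit in $\Z_{(p)}$. Hence $s:=k^{-1}\operatorname{tr}$ satisfies $\pi_*\circ s=\id$. Therefore $\pi_*$ is a split surjection, and $s$ identifies $H_*(B_k(\Sigma_1);\Z_{(p)})$ with a direct summand:
\[
H_*(B_k^\odot(\Sigma_1);\Z_{(p)})\cong H_*(B_k(\Sigma_1);\Z_{(p)})\oplus\ker\pi_*.
\]

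There is no real obstacle here. The only point needing care is the verification that $\pi$ is a genuine covering, which is handled cleanly by the quotient description of $F_k(\Sigma_1)$ by $\mathfrak S_{k-1}\subset\mathfrak S_k$.
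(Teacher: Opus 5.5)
Your proof is correct and follows the paper's argument exactly: the forgetful map is a $k$-sheeted covering, and its transfer satisfies $\pi_*\circ\tr=k\cdot\id$. Since $k$ is a unit in $\Z_{(p)}$, the rescaled transfer splits $\pi_*$. Your verification of the covering property via $F_k(\Sigma_1)/\mathfrak S_{k-1}\to F_k(\Sigma_1)/\mathfrak S_k$ fills in a step the paper simply asserts.
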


\begin{proof}
  The forgetful map $\pi:B_k^\odot(\Sigma_1)\to B_k(\Sigma_1)$ is a $k$-sheeted covering.
  Its transfer satisfies
  \begin{equation*}
    \pi_*\circ\tr=k \cdot \id_{H_*(B_k(\Sigma_1);\Z_{(p)})}.
  \end{equation*}
  Since $k$ is a unit in $\Z_{(p)}$, the rescaled transfer splits $\pi_*$.
\end{proof}

\begin{lemma}
  \label{lem:split-marked}
  There is a homeomorphism
  \begin{equation*}
    B_k^\odot(\Sigma_1)\cong \Sigma_1 \times B_{k-1}(\Sigma_{1,1}^\circ).
  \end{equation*}
\end{lemma}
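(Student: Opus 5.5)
The plan is to use the group structure of the torus. Regard $\Sigma_1=\R^2/\Z^2$ as an abelian topological group, and take the fixed center $q$ of the disk $D$ as the base point; by the convention of Section~\ref{sec:notation}, $\Sigma_{1,1}^\circ$ is identified with $\Sigma_1\setminus\{q\}$. Define
\begin{equation*}
  \Phi:B_k^\odot(\Sigma_1)\longrightarrow \Sigma_1\times B_{k-1}(\Sigma_1\setminus\{q\}),
  \qquad
  (Q,x)\longmapsto\bigl(x,\ (Q\setminus\{x\})-x+q\bigr).
\end{equation*}
In words, $\Phi$ records the marked point and then translates the configuration so that the marked point moves to $q$. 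Since translation is a homeomorphism of $\Sigma_1$, the translated set $(Q\setminus\{x\})-x+q$ consists of $k-1$ distinct points, none of which equals $q$. So $\Phi$ lands in the stated target. The candidate inverse is
\begin{equation*}
  \Psi(x,R)=\bigl((R-q+x)\cup\{x\},\ x\bigr),
\end{equation*}
and a direct check shows that $\Phi$ and $\Psi$ are mutually inverse as maps of sets.

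The remaining work is continuity of $\Phi$ and $\Psi$, handled in three steps.
\begin{enumerate}
  \item Pass to ordered configurations with the marked point listed first. Write $B_k^\odot(\Sigma_1)$ as the quotient of the ordered configuration space $F_k(\Sigma_1)$ by $\mathfrak S_{k-1}$, acting on the last $k-1$ coordinates; the map $(x_1,\dots,x_k)\mapsto(\{x_1,\dots,x_k\},x_1)$ is a continuous bijection onto $B_k^\odot(\Sigma_1)$.
  \item Check the lifted maps on ordered tuples. There, $\Phi$ lifts to $(x_1,\dots,x_k)\mapsto(x_1,\,x_2-x_1+q,\dots,x_k-x_1+q)$. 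This map is continuous and $\mathfrak S_{k-1}$-equivariant, so it descends to the quotients. The same argument applies to $\Psi$.
  \item Identify the subspace topology on $B_k^\odot(\Sigma_1)$ with the quotient topology. Both quotient maps are finite coverings, so the marked space is a $(k-1)!$-fold quotient and the two topologies agree.
\end{enumerate}

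The only step needing care is the third. I would handle it by noting that $\{(Q,x):x\in Q\}$ is locally the graph of a choice of one of $k$ locally continuous sheets of the covering $F_k\to B_k$. Alternatively, one can avoid point-set issues by viewing everything as quotients of $F_k(\Sigma_1)$, and observe that a continuous bijection between these spaces, with continuous inverse defined the same way, is a homeomorphism.

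Finally, combining Lemma~\ref{lem:split-marked} with Lemma~\ref{lem:direct-summand} and the Künneth formula over $\Z_{(p)}$ (using that $H_*(\Sigma_1)$ is free) reduces Theorem~\ref{thmA} to Theorem~\ref{thm:punctured-threshold} with $k-1<2p$.
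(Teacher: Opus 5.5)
Your proposal is correct and is essentially the paper's proof. The paper also uses the group structure of $\Sigma_1$, with $(Q,x)\mapsto(x,(Q-x)\setminus\{0\})$ and inverse $(x,R)\mapsto(\{x\}\cup(x+R),x)$, and then composes with a homeomorphism $\Sigma_1\setminus\{0\}\cong\Sigma_{1,1}^\circ$; you instead translate the marked point to $q$ directly, and your continuity argument via ordered configurations and local sheets of the covering supplies point-set details that the paper simply asserts.
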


\begin{proof}
  Regard $\Sigma_1$ as an abelian topological group with identity element $0$.
  The maps
  \begin{align*}
    \Sigma_1 \times B_{k-1}(\Sigma_1\setminus\{0\})
    &\longrightarrow B_k^\odot(\Sigma_1),
    & (x,R)&\longmapsto(\{x\}\cup(x+R),x),\\
    B_k^\odot(\Sigma_1)
    &\longrightarrow \Sigma_1 \times B_{k-1}(\Sigma_1\setminus\{0\}),
    & (Q,x)&\longmapsto(x,(Q-x)\setminus\{0\})
  \end{align*}
  are mutually inverse homeomorphisms.
  Composing with a homeomorphism $\Sigma_1\setminus\{0\}\cong\Sigma_{1,1}^\circ$, which exists since both spaces are once-punctured tori, gives the result.
\end{proof}

\begin{proof}[Proof of Theorem~\ref{thmA}]
  Suppose $p<k<2p$.
  Since the homology is finitely generated, Theorem~\ref{thm:punctured-threshold} implies that $H_*(B_{k-1}(\Sigma_{1,1}^\circ);\Z_{(p)})$ is free because $k-1<2p$.
  By Lemma~\ref{lem:split-marked} and the Künneth theorem over $\Z_{(p)}$, $H_*(B_k^\odot(\Sigma_1);\Z_{(p)})$ is free.
  Since $p\nmid k$, Lemma~\ref{lem:direct-summand} makes $H_*(B_k(\Sigma_1);\Z_{(p)})$ a direct summand of a free $\Z_{(p)}$-module, hence free.
  Localization therefore shows that $H_*(B_k(\Sigma_1);\Z)$ has no $p$-primary torsion.
\end{proof}

Together with Theorem~\ref{thm:chen-zhang}, Theorem~\ref{thmA} gives the
combined range $k\leq2p-1$ stated in the abstract.

\section{The Gysin exact sequence}
\label{sec:gysin}

The puncture-filling inclusion gives a Gysin sequence whose connecting map controls whether the punctured torsion class survives on the closed torus.
After writing this sequence, we reduce survival to a mod-$p$ equivariant separation problem and identify the action on its target line.

Recall the choices $q \in D \subset \Sigma_1$ and the identification $\Sigma_{1,1} = \Sigma_1\setminus \mathring{D}$, so that $\Sigma_{1,1}^\circ = \Sigma_1\setminus D$.
Inside the unordered configuration space $B_{2p}(\Sigma_1)$, the configurations containing the point $q$ form a closed submanifold of codimension $2$; deleting $q$ from such a configuration identifies it with a configuration space of the \emph{deleted-point} torus,
\begin{equation}
  \{Q\in B_{2p}(\Sigma_1):q\in Q\}\cong B_{2p-1}(\Sigma_1\setminus\{q\}).
  \label{eq:puncture-submanifold}
\end{equation}
Its open complement is $B_{2p}(\Sigma_1\setminus\{q\})$, and its normal bundle is the trivial oriented plane bundle with fiber $T_q\Sigma_1$.

The standard collar inclusions
$B_k(\Sigma_{1,1}^\circ)\hookrightarrow B_k(\Sigma_1\setminus\{q\})$
are homotopy equivalences whose composites with
$B_k(\Sigma_1\setminus\{q\})\hookrightarrow B_k(\Sigma_1)$ are the
puncture-filling maps $j$.
The Gysin exact sequence of~\eqref{eq:puncture-submanifold} therefore becomes
\begin{equation}
  \cdots \to
  H_{2p-3}(B_{2p-1}(\Sigma_{1,1}^\circ);\Z_{(p)})
  \xrightarrow{\partial}
  H_{2p-2}(B_{2p}(\Sigma_{1,1}^\circ);\Z_{(p)})
  \xrightarrow{j_*}
  H_{2p-2}(B_{2p}(\Sigma_1);\Z_{(p)})
  \to \cdots
  \label{eq:gysin}
\end{equation}

\subsection{Reduction modulo \texorpdfstring{$p$}{p}}
\label{sec:mod-p-reduction}

We convert the integral survival question into the assertion that $\beta_0$ is not in the image of the mod-$p$ connecting map.
Since $B_{2p-1}(\Sigma_{1,1}^\circ)$ lies below the punctured threshold, its homology with $\Z_{(p)}$-coefficients is free; this gives the coefficient-change identification for the domain of $\partial$ used below.
The codomain need not be torsion-free: it contains the order-$p$ class $\tau_p$, whose reduction modulo $p$ is non-zero.

Let $\tau_p$ be the generator of the order-$p$ torsion subgroup of
$H_{2p-2}(B_{2p}(\Sigma_{1,1}^\circ);\Z_{(p)})$ fixed in~\eqref{eq:tau-p}.
By exactness,
\begin{equation}
  j_*(\tau_p)\neq0
  \quad\Longleftrightarrow\quad
  \tau_p\notin\operatorname{im}\partial.
  \label{eq:gysin-survival}
\end{equation}

The universal coefficient theorem over $\Z_{(p)}$ gives a natural short exact sequence
\begin{equation}
  0\longrightarrow
  H_i(Y;\Z_{(p)})\otimes_{\Z_{(p)}}\F_p
  \longrightarrow H_i(Y;\F_p)
  \longrightarrow
  \operatorname{Tor}_1^{\Z_{(p)}}
  \bigl(H_{i-1}(Y;\Z_{(p)}),\F_p\bigr)
  \longrightarrow0.
  \label{eq:uct-mod-p}
\end{equation}
Since $2p-1<2p$, Theorem~\ref{thm:punctured-threshold}
implies that $H_*(B_{2p-1}(\Sigma_{1,1}^\circ);\Z_{(p)})$ is free.
Applying~\eqref{eq:uct-mod-p}
in degree $2p-3$ therefore yields a natural
identification
\begin{equation}
  M\coloneqq H_{2p-3}(B_{2p-1}(\Sigma_{1,1}^\circ);\F_p)
  \cong
  H_{2p-3}(B_{2p-1}(\Sigma_{1,1}^\circ);\Z_{(p)})
  \otimes_{\Z_{(p)}}\F_p.
  \label{eq:mod-p-source}
\end{equation}
Set
\begin{equation*}
  N\coloneqq
  H_{2p-2}(B_{2p}(\Sigma_{1,1}^\circ);\Z_{(p)})
  \otimes_{\Z_{(p)}}\F_p
\end{equation*}
and let $\bar\partial=\partial\otimes_{\Z_{(p)}}\F_p:M\to N$.
Under the natural coefficient-change injection
\[
  N\hookrightarrow H_{2p-2}(B_{2p}(\Sigma_{1,1}^\circ);\F_p),
\]
the element $\overline{\tau_p}\in N$ maps to the class $\beta_0$
of~\eqref{eq:beta0} by naturality of coefficient change.
Indeed, finite generation and Theorem~\ref{thm:punctured-threshold} give a
decomposition
\begin{equation*}
  H_{2p-2}(B_{2p}(\Sigma_{1,1}^\circ);\Z_{(p)})
  \cong \Z_{(p)}^r\oplus\Z_{(p)}/(p)\{\tau_p\}
\end{equation*}
for some $r$.  Thus $\tau_p$ does not lie in $p$ times this homology group, so
its image in $N$ is non-zero; the coefficient-change injection identifies that
image with $\beta_0$.

The reduction maps fit into the commutative square
\begin{equation}
  \begin{tikzcd}[column sep=large]
    H_{2p-3}(B_{2p-1}(\Sigma_{1,1}^\circ);\Z_{(p)})
    \arrow[r,"\partial"]
    \arrow[d,"x\mapsto \overline{x}"']
    & H_{2p-2}(B_{2p}(\Sigma_{1,1}^\circ);\Z_{(p)})
    \arrow[d,"y\mapsto \overline{y}"] \\
    M \arrow[r,"\bar\partial"'] & N
  \end{tikzcd}
  \label{eq:mod-p-gysin-square}
\end{equation}
We use only the commutativity of this square; we do not need tensoring~\eqref{eq:gysin}
with $\F_p$ to preserve exactness.

\begin{lemma}
  \label{lem:mod-p-reduction}
  If $\beta_0\notin\operatorname{im}\bar\partial$, then $0\neq j_*(\tau_p)\in H_{2p-2}(B_{2p}(\Sigma_1);\Z_{(p)})$.
\end{lemma}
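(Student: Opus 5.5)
We argue by contraposition, using only the exactness of~\eqref{eq:gysin} at the middle term, the equivalence~\eqref{eq:gysin-survival}, and the commutative square~\eqref{eq:mod-p-gysin-square}. Suppose $j_*(\tau_p)=0$. By~\eqref{eq:gysin-survival} there is an integral class $x\in H_{2p-3}(B_{2p-1}(\Sigma_{1,1}^\circ);\Z_{(p)})$ with $\partial(x)=\tau_p$.

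Reduce this equation modulo $p$. Commutativity of~\eqref{eq:mod-p-gysin-square} gives $\bar\partial(\overline{x})=\overline{\partial(x)}=\overline{\tau_p}$ in $N$. The target of $\bar\partial$ is the group $N=H_{2p-2}(B_{2p}(\Sigma_{1,1}^\circ);\Z_{(p)})\otimes\F_p$. We regard $\beta_0$ as an element of $N$ through the coefficient-change injection $N\hookrightarrow H_{2p-2}(B_{2p}(\Sigma_{1,1}^\circ);\F_p)$, under which $\overline{\tau_p}\mapsto\beta_0$, as established just before the lemma. Hence $\overline{\tau_p}$ is exactly the element that the statement calls $\beta_0$, so $\beta_0=\bar\partial(\overline{x})\in\operatorname{im}\bar\partial$. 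This contradicts the hypothesis, and therefore $j_*(\tau_p)\neq0$.

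The only point needing care is the bookkeeping of where $\beta_0$ lives. The hypothesis ``$\beta_0\notin\operatorname{im}\bar\partial$'' must be read inside $N$, or equivalently inside $H_{2p-2}(B_{2p}(\Sigma_{1,1}^\circ);\F_p)$ after composing with the injection. These two readings agree because the map $N\to H_{2p-2}(\,\cdot\,;\F_p)$ is injective and sends $\overline{\tau_p}$ to $\beta_0$.

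The argument does not need the identification~\eqref{eq:mod-p-source} of $M$, nor any exactness of the sequence after reduction modulo $p$. It uses only that the reduction $\overline{x}$ of the integral preimage lies in $M$, which holds for any integral class. There is no real obstacle here. The substantive work is verifying the hypothesis $\beta_0\notin\operatorname{im}\bar\partial$, which is deferred to the equivariance argument.
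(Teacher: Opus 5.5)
Your proof is correct and is essentially the paper's argument: assuming $j_*(\tau_p)=0$, exactness of the Gysin sequence gives $\tau_p=\partial x$, and commutativity of the reduction square then gives $\bar\partial(\overline{x})=\overline{\tau_p}=\beta_0$, contradicting the hypothesis. Your remark about reading $\beta_0$ inside $N$ through the coefficient-change injection matches the identification the paper makes just before the lemma.
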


\begin{proof}
  Suppose, to the contrary, that $j_*(\tau_p)=0$.
  Exactness of~\eqref{eq:gysin} then gives an element
  $x\in H_{2p-3}(B_{2p-1}(\Sigma_{1,1}^\circ);\Z_{(p)})$ such that
  $\partial x=\tau_p$.
  Commutativity of~\eqref{eq:mod-p-gysin-square} now gives
  \begin{equation*}
    \bar\partial(\overline{x})
    =\overline{\partial x}
    =\overline{\tau_p}
    =\beta_0,
  \end{equation*}
  contradicting $\beta_0\notin\operatorname{im}\bar\partial$.
  Hence $j_*(\tau_p)\neq0$.
\end{proof}

Theorem~\ref{thmB} therefore reduces to showing that
$\beta_0$ is not in the image of $\bar\partial$.

\subsection{Mapping-class equivariance and the target line}
\label{sec:equivariance}

We place the mod-$p$ Gysin map in the category of modules over the finite Johnson quotient $\widetilde G$.
We first show that the target line spanned by $\beta_0$ is a trivial $\widetilde G$-representation for every odd prime.
The restriction $p\geq5$ enters only later, when we exclude trivial composition factors from the source.

Continue with the notation of Section~\ref{sec:bs-action}, and write
$H=H_1(\Sigma_{1,1};\F_p)\cong\F_p^2$.
Because $\Gamma_{1,1}$ fixes the boundary pointwise, it should not be confused with
$\SL_2(\Z)$: its integral homological action surjects onto $\SL_2(\Z)$, but
the Dehn twist about the boundary component lies in the kernel. Set
\begin{equation}
  A=\operatorname{im}(\xi_\tau^p),
  \qquad
  \widetilde G=\Gamma_{1,1}/K^\xi.
  \label{eq:johnson-quotient}
\end{equation}
Reduction modulo $p$ of the homological action gives a surjection
$\Gamma_{1,1}\twoheadrightarrow\SL(H)=\SL_2(\F_p)$~\cite[Section~2.2]{farbPrimerMappingClass2012}.
Since $K^\xi\subset\T_{1,1}(p)$ and
$\T_{1,1}(p)/K^\xi\cong A$, there is an exact sequence
\begin{equation}
  1\longrightarrow A\longrightarrow\widetilde G
  \longrightarrow\SL_2(\F_p)\longrightarrow1.
  \label{eq:finite-extension}
\end{equation}
Thus $\widetilde G$ is finite and $A$ is an elementary abelian $p$-group.

Extending boundary-fixing diffeomorphisms across $D$ by the identity preserves
the submanifold~\eqref{eq:puncture-submanifold} and its oriented normal bundle.
The collar inclusions used to write~\eqref{eq:gysin} are equivariant under
such extensions.
Hence the Gysin sequence~\eqref{eq:gysin} and its reduction $\bar\partial$ are
$\Gamma_{1,1}$-equivariant. By Section~\ref{sec:bs-action}, $K^\xi$ acts trivially on
the mod-$p$ cellular complex and therefore on $M$. It also acts trivially on
$N$, by equivariance of the coefficient-change injection
$N\hookrightarrow H_{2p-2}(B_{2p}(\Sigma_{1,1}^\circ);\F_p)$. Thus $M$ and $N$ are
$\F_p[\widetilde G]$-modules and $\bar\partial:M\to N$ is
$\widetilde G$-linear.

\begin{lemma}
  \label{lem:johnson-image}
  As an $\SL_2(\F_p)$-module, $A$ is either $0$ or the standard module $H$.
\end{lemma}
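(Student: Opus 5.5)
We must show that $A=\operatorname{im}(\xi_\tau^p)\subseteq\Hom_{\F_p}(H,\Lambda^2H)$ is an $\SL_2(\F_p)$-submodule isomorphic to $0$ or to $H$. The plan is to identify the ambient module, use the $\Gamma_{1,1}$-equivariance of $\xi_\tau^p$ to see that $A$ is a submodule, and then use simplicity of $H$.

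\textbf{Step 1: the ambient module is $H$.} In genus one, $\Lambda^2H$ is one-dimensional, spanned by $[\gamma_a]\wedge[\gamma_b]$. The group $\SL_2(\F_p)$ acts on it by the determinant, hence trivially. Therefore
\[
  \Hom_{\F_p}(H,\Lambda^2H)\cong H^\vee\otimes\Lambda^2H\cong H^\vee .
\]
The intersection pairing $\omega$ identifies $H^\vee$ equivariantly with $H$ (Section~\ref{sec:rep-conventions}). So the ambient module is the standard module $H$, which is $V_1$, and this is simple.

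\textbf{Step 2: $A$ is a submodule.} By \cite[Lemma~5.14]{bianchiHomologyConfigurationSpaces2024}, $\xi_\tau^p$ is $\Gamma_{1,1}$-equivariant:
\[
  \xi_\tau^p(\psi\phi\psi^{-1})=\psi\cdot\xi_\tau^p(\phi)\qquad\text{for }\psi\in\Gamma_{1,1},\ \phi\in\T_{1,1}(p).
\]
Since $\T_{1,1}(p)$ is normal in $\Gamma_{1,1}$, the image $A$ is stable under $\Gamma_{1,1}$. Also, $\T_{1,1}(p)$ acts trivially on $\Hom(H,\Lambda^2H)$, so this action factors through $\Gamma_{1,1}\twoheadrightarrow\SL_2(\F_p)$. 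Because $\xi_\tau^p$ is a homomorphism, $A$ is a subgroup, and since its values are mod $p$ it is an $\F_p$-subspace. Hence $A$ is an $\SL_2(\F_p)$-submodule of $H$.

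\textbf{Step 3: conclude by simplicity.} The module $H$ is simple: for $p$ odd, no line in $\F_p^2$ is stable under $\SL_2(\F_p)$, because the upper and lower unipotent subgroups have different fixed lines. So $A$ is either $0$ or all of $H$.

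\textbf{Main obstacle.} The only delicate point is Step 2, namely making sure the equivariance is compatible with the identifications used. Concretely, the $\Gamma_{1,1}$-action on $\Hom(H,\Lambda^2H)$ in Bianchi--Stavrou must be the one induced from $H$, and this action must factor through $\SL_2(\F_p)$. I would check this directly from the defining formula $\xi_\tau^p(\phi)([\gamma_i])=c_2(\phi(\gamma_i))$ and the multiplicativity of the content map $c$, as a sanity check on \cite[Lemma~5.14]{bianchiHomologyConfigurationSpaces2024}.

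Deciding which of the two alternatives actually occurs is not needed here.
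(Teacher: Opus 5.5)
Your proposal is correct and follows essentially the same route as the paper. Both identify $\Hom_{\F_p}(H,\Lambda^2H)\cong H^\vee\otimes\det(H)\cong H$ via symplectic self-duality and the triviality of the determinant, use Bianchi--Stavrou's Lemma~5.14 to see that $A$ is an $\SL_2(\F_p)$-submodule, and conclude by irreducibility of $H$. Your extra details (normality of $\T_{1,1}(p)$, factoring the action through $\SL_2(\F_p)$, and the unipotent fixed-line argument for simplicity) make explicit what the paper leaves implicit.
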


\begin{remark}\label{rem:johnson-image}
  Bianchi--Stavrou's Proposition~5.17~\cite{bianchiHomologyConfigurationSpaces2024} would give
  $A\cong\Lambda^3H=0$ in our notation, but the cited proof of the generation statement used there does not establish the genus-one case. More precisely, it uses
  \cite[Theorem~5.2]{bianchiHomologyConfigurationSpaces2024}, attributed to Cooper~\cite{cooperTwoModP2015} and Perron~\cite{perronFiltrationJohnsonGroupe2008}, which states that $\T_{g,1}(p)$ is generated by $\T_{g,1}$ and the $p$th powers of all Dehn twists.
  Perron's note contains no proof but points to a theorem of Bass--Milnor--Serre~\cite{bassSolutionCongruence1967}.
  Cooper~\cite[Lemma~5.1]{cooperTwoModP2015} gave an argument for the corresponding statement which invokes a normal-generation result stated as~\cite[Theorem~5.4]{cooperTwoModP2015} and attributed to Bass--Milnor--Serre~\cite{bassSolutionCongruence1967}.
  The underlying symplectic theorem is stated only in rank at least two~\cite[Theorem~12.4]{bassSolutionCongruence1967}; hence this proof does not cover genus one.
  In fact, the unrestricted statement is false in genus one for $p\geq7$.

  Indeed, project the symplectic representation of $\Gamma_{1,1}$ to $\mathrm{PSL}_2(\Z)$, and let $T$ denote the image of a non-separating Dehn twist.
  The ordinary Torelli group maps trivially.
  The subgroup generated by the $p$th powers of all Dehn twists is normal, since
  $fD_c^pf^{-1}=D_{f(c)}^p$ for every mapping class $f$ and Dehn twist $D_c$.
  Separating twists act trivially on homology and all non-separating twists are
  conjugate, so its image in $\mathrm{PSL}_2(\Z)$ is precisely the normal closure
  $\langle\!\langle T^p\rangle\!\rangle$.
  For odd $p$, the projective image of $\T_{1,1}(p)$ is precisely the projective principal congruence subgroup
  \begin{equation*}
    \overline\Gamma(p)
    =\ker\bigl(\mathrm{PSL}_2(\Z)\longrightarrow
    \mathrm{PSL}_2(\F_p)\bigr).
  \end{equation*}
  Indeed, a projective class trivial modulo $p$ has a representative congruent
  to either $I$ or $-I$, and changing its sign if necessary gives a representative
  congruent to $I$.
  Thus, if \cite[Theorem~5.2]{bianchiHomologyConfigurationSpaces2024} held in genus one,
  this normal closure would equal $\overline\Gamma(p)$.
  However, the standard presentation
  $\mathrm{PSL}_2(\Z)=\langle s,r\mid s^2=r^3=1\rangle$, with $T=sr$,
  gives
  \begin{equation*}
    \mathrm{PSL}_2(\Z)/\langle\!\langle T^p\rangle\!\rangle
    \cong \mathrm{D}(2,3,p) = \langle s,r\mid s^2=r^3=(sr)^p=1\rangle,
  \end{equation*}
  where $\mathrm{D}(2,3,p)$ is the orientation-preserving triangle group (or von Dyck group).
  This triangle group on the right is hyperbolic, and hence infinite, for
  $p\geq7$, since $\frac12+\frac13+\frac1p<1$, whereas
  $\mathrm{PSL}_2(\Z)/\overline\Gamma(p)\cong\mathrm{PSL}_2(\F_p)$
  is finite.
  Thus the two subgroups cannot coincide.

  The Bianchi--Stavrou inputs used here are independent of this generation
  statement.  Proposition~4.19 precedes it; Corollary~5.12 and Lemma~5.14 are
  proved from Proposition~5.11; and the implication from Proposition~5.22 that
  $\ker(\xi_\tau^p)$ acts trivially on
  $\operatorname{UMor}_\bullet(2)\otimes\F_p$ is proved directly from the ring
  action and its values on all divided powers.  Only the further description
  of this kernel by Dehn-twist generators passes through Proposition~5.17 and
  Theorem~5.2, and we do not use that description.
  Lemma~\ref{lem:johnson-image} uses only the equivariance of $\xi_\tau^p$ to
  obtain the dichotomy $A=0$ or $A=H$, and either case suffices below.
\end{remark}

\begin{proof}[Proof of Lemma~\ref{lem:johnson-image}]
  In genus $1$, the target of $\xi_\tau^p$ is
  \begin{equation*}
    \Hom(H, \Lambda^2 H) \cong H^\vee \otimes \det(H)\cong H.
  \end{equation*}
  The second isomorphism uses the symplectic self-duality of $H$ and the triviality of the determinant on
  $\SL_2(\F_p)$.
  By~\cite[Lemma~5.14]{bianchiHomologyConfigurationSpaces2024}, $A$ is an $\SL_2(\F_p)$-submodule of this standard module.
  The standard module is irreducible, so $A=0$ or $A=H$.
\end{proof}

\begin{lemma}
  \label{lem:perfect-quotient}
  If $p\geq5$, then $\widetilde G$ is perfect (i.e., its abelianization vanishes).
\end{lemma}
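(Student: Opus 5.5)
We need to show $\widetilde G$ is perfect for $p\ge5$, using the extension~\eqref{eq:finite-extension} $1\to A\to\widetilde G\to\SL_2(\F_p)\to1$ and Lemma~\ref{lem:johnson-image}. The plan is to show that the commutator subgroup $[\widetilde G,\widetilde G]$ surjects onto $\SL_2(\F_p)$ and also contains $A$.

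\emph{Step 1: the quotient is perfect.} We first recall that $\SL_2(\F_p)$ is perfect for $p\ge5$. This is standard: it is generated by transvections $u(t)=\begin{pmatrix}1&t\\0&1\end{pmatrix}$. With $d=\operatorname{diag}(a,a^{-1})$ one has $d\,u(t)\,d^{-1}u(t)^{-1}=u((a^2-1)t)$. Choosing $a\in\F_p^\times$ with $a^2\neq1$, which is possible since $p\ge5$, every transvection is a commutator. Consequently the image of $[\widetilde G,\widetilde G]$ in $\SL_2(\F_p)$ is all of $\SL_2(\F_p)$, so $\widetilde G=[\widetilde G,\widetilde G]\cdot A$. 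If $A=0$ we are done.

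\emph{Step 2: $A\subseteq[\widetilde G,\widetilde G]$ when $A=H$.} Since $A$ is abelian and normal, conjugation by $\widetilde G$ on $A$ factors through $\SL_2(\F_p)$. By Lemma~\ref{lem:johnson-image} and the equivariance of $\xi_\tau^p$ \cite[Lemma~5.14]{bianchiHomologyConfigurationSpaces2024}, this action is the standard module $H$. For $g\in\widetilde G$ and $a\in A$, the commutator $gag^{-1}a^{-1}=(\bar g-1)\cdot a$ (written additively) lies in $[\widetilde G,\widetilde G]$. Hence $[\widetilde G,\widetilde G]\supseteq (\bar g-1)H$ for all $\bar g\in\SL_2(\F_p)$. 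Taking $\bar g=-I$, which is available since $p$ is odd, gives $(\bar g-1)H=-2H=H$, so $A\subseteq[\widetilde G,\widetilde G]$. In other words, the coinvariants $H_{\SL_2(\F_p)}$ vanish.

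\emph{Conclusion.} Combining Steps 1 and 2, $\widetilde G=[\widetilde G,\widetilde G]A=[\widetilde G,\widetilde G]$. Equivalently, the five-term sequence $H_1(A)_{\SL_2}\to H_1(\widetilde G)\to H_1(\SL_2(\F_p))\to 0$ has vanishing outer terms.

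The only delicate point is making sure the conjugation action of $\widetilde G$ on $A$ really is the $\SL_2(\F_p)$-module structure used in Lemma~\ref{lem:johnson-image}. This follows from the fact that $\xi_\tau^p$ is a homomorphism which is equivariant for conjugation in $\Gamma_{1,1}$ and for the natural action on $\Hom(H,\Lambda^2H)$. Since $\T_{1,1}(p)$ acts trivially on that target, the action descends to $\SL_2(\F_p)$. The rest is routine.
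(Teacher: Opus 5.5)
Your proof is correct and essentially the paper's: perfectness of $\SL_2(\F_p)$ via the commutators $[d,u(t)]$, plus vanishing of the coinvariants of $A\in\{0,H\}$ in $A_{\SL_2(\F_p)}\to\widetilde G^{\mathrm{ab}}\to\SL_2(\F_p)^{\mathrm{ab}}\to0$. The only variation is that you kill $H_{\SL_2(\F_p)}$ with the central element $-I$, since $(-I)-1=-2$ is a unit, rather than with $u(1)$ and $\ell(1)$ as the paper does. One small fix in Step 1: the upper unipotents $u(t)$ alone do not generate $\SL_2(\F_p)$, so you should also treat the lower ones $\ell(t)$, as the paper does, or invoke normality of the commutator subgroup.
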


\begin{proof}
  Since $A$ is abelian, conjugation by $\widetilde G$ induces an $\SL_2(\F_p)$-action on $A$.
  The map
  $A\to\widetilde G^{\mathrm{ab}}$ kills every element $g\cdot a-a$ and
  therefore factors through the coinvariants $A_{\SL_2(\F_p)} = A/\langle g\cdot a-a\rangle_{g\in \SL_2(\F_p),\ a\in A}$.
  Moreover, the quotient map $\widetilde G\to \SL_2(\F_p)$ induces an exact sequence
  \begin{equation}
    A_{\SL_2(\F_p)}\longrightarrow \widetilde G^{\mathrm{ab}}
    \longrightarrow \SL_2(\F_p)^{\mathrm{ab}}\longrightarrow 0.
    \label{eq:abelianization-johnson-extension}
  \end{equation}

  We first reprove the classical fact that $\SL_2(\F_p)$ is perfect for $p \geq 5$.
  Choose $\lambda\in\F_p^\times$ with $\lambda^2\neq1$, and set
  \begin{equation*}
    d=
    \begin{pmatrix}\lambda&0\\0&\lambda^{-1}
    \end{pmatrix},\qquad
    u(t)=
    \begin{pmatrix}1&t\\0&1
    \end{pmatrix},\qquad
    \ell(t)=
    \begin{pmatrix}1&0\\t&1
    \end{pmatrix}.
  \end{equation*}
  Direct calculation gives
  \begin{equation*}
    [d,u(t)]=u\bigl((\lambda^2-1)t\bigr),\qquad
    [d,\ell(t)]=\ell\bigl((\lambda^{-2}-1)t\bigr).
  \end{equation*}
  Both displayed coefficients are non-zero.
  Hence every upper and lower unipotent matrix is a commutator.
  These matrices generate $\SL_2(\F_p)$, so $\SL_2(\F_p)$ is perfect and $\SL_2(\F_p)^{\mathrm{ab}}=0$.

  It remains to treat the left-hand term.
  By Lemma~\ref{lem:johnson-image}, either $A=0$, in which case $A_{\SL_2(\F_p)}=0$, or $A=H$.
  In the latter case, let $e,f$ be the standard basis of $H$.
  Then
  \begin{equation*}
    (u(1)-1)f=e,\qquad (\ell(1)-1)e=f.
  \end{equation*}
  Thus the relations defining the coinvariants contain both basis vectors, and consequently $(H)_{\SL_2(\F_p)}=0$.
  Both outer terms in~\eqref{eq:abelianization-johnson-extension} therefore vanish, so $\widetilde G^{\mathrm{ab}}=0$.
  Equivalently, $\widetilde G$ is perfect.
\end{proof}

\begin{lemma}
  \label{lem:target-fixed}
  For every odd prime $p$, the line $\F_p\{\beta_0\}\subset N = H_{2p-2}(B_{2p}(\Sigma_{1,1}^\circ);\Z_{(p)}) \otimes_{\Z_{(p)}}\F_p$ is the trivial one-dimensional representation of $\widetilde G$.
\end{lemma}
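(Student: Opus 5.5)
The plan is to show that every $\phi\in\Gamma_{1,1}$ fixes $\beta_0$ in $N$. This suffices because the action on $N$ factors through $\widetilde G$.

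\emph{Step 1: the line is invariant.} The subgroup $\Tors H_{2p-2}(B_{2p}(\Sigma_{1,1}^\circ);\Z_{(p)})=\Z/p\{\tau_p\}$ is characteristic, so it is preserved by every automorphism. In particular it is preserved by the $\Gamma_{1,1}$-action on integral homology, which comes from boundary-fixing homeomorphisms acting on $B_{2p}(\Sigma_{1,1}^\circ)$. Hence $\phi(\tau_p)=\chi(\phi)\tau_p$ for a character $\chi:\Gamma_{1,1}\to\F_p^\times$. Reducing modulo $p$ and using naturality of the square relating $\tau_p$ to $\beta_0$, the line $\F_p\{\beta_0\}\subset N$ is $\widetilde G$-stable, and $\widetilde G$ acts on it through $\chi$. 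The character factors through $\widetilde G$ because $K^\xi$ acts trivially on $N$.

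\emph{Step 2: the character is trivial.} The group $\F_p^\times$ is abelian of order prime to $p$, so $\chi$ factors through $\widetilde G^{\mathrm{ab}}$. For $p\ge5$, Lemma~\ref{lem:perfect-quotient} gives $\widetilde G^{\mathrm{ab}}=0$, so $\chi=1$.

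For $p=3$, and to give a uniform argument valid for all odd $p$, I would instead compute directly using the abelianization of $\Gamma_{1,1}$. The group $\Gamma_{1,1}$ is the braid group $B_3=\langle T_a,T_b\mid T_aT_bT_a=T_bT_aT_b\rangle$, whose abelianization is $\Z$, generated by the class of a single non-separating Dehn twist. So it suffices to check $\chi(T_a)=1$. I see two ways to do this.

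\textbf{(i)} $T_a^p$ lies in $\T_{1,1}(p)$, and $\chi(T_a)$ has order dividing $p-1$. This alone does not pin down $\chi(T_a)$, so it is not enough.

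\textbf{(ii)} Work in the Bianchi--Stavrou bar model. Since $\beta_0$ is the polynomial generator tensored with the weight-zero part $1\in\cU$ of the rational factor, it should be represented by a cycle in $\overline B(\Z,\cA,\cU)\otimes\F_p$ whose $\cU$-components can be chosen to lie in the weight-zero, degree-zero summand $\F_p\cdot1$. That summand is fixed by every ring automorphism. Since $\Gamma_{1,1}$ fixes the bar entries, $\phi$ fixes this cycle on the nose. This is the main obstacle: I need a representative of $\beta_0$ involving only bar entries and the unit of $\cU$. I would obtain it from the fake-basechange isomorphism, which arises from a comparison with the planar (disk) case. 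There the class comes from the $\Tor^{\cA}(\Z,\Z)$-part, namely bar words tensored with $1$. Failing that, I would argue via naturality under the inclusion of a disk $D'\subset\Sigma_{1,1}^\circ$ disjoint from the support of a representative twist. The class $\beta_0$ would be the image of the first $p$-torsion class of $B_{2p}(D')$ found by Vainshtein. A twist $T_a$ supported away from $D'$ fixes the image of $H_*(B_{2p}(D'))$. Since all non-separating twists are conjugate and $\chi$ is a class function, this gives $\chi\equiv1$.

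Either way, every element of $\widetilde G$ fixes $\beta_0$, which proves the lemma for all odd $p$.
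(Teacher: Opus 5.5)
\textbf{Gap at $p=3$.} Your Step~1 and your treatment of $p\geq5$ match the paper. The torsion subgroup is characteristic, so it gives a character $\chi$ of $\widetilde G$, and $\chi$ is trivial because $\widetilde G$ is perfect (Lemma~\ref{lem:perfect-quotient}). The problem is the case $p=3$, which the lemma also covers.

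Route (ii) is not carried out. You do not construct a cobar representative of $\beta_0$ supported on $1\in\cU$. Nor do you prove that the image of Vainshtein's disk class in $H_{2p-2}(B_{2p}(\Sigma_{1,1}^\circ))$ is non-zero; you only say it ``would be''. Either version has a further obligation. The fixed class you obtain must lie on the specific line $\F_p\{\overline{\tau_p}\}$, the reduction of the torsion subgroup. A fixed vector of $H_{2p-2}(B_{2p}(\Sigma_{1,1}^\circ);\F_p)$ off that line says nothing about $\chi$. So as written, the $p=3$ case is unproved.

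\textbf{Route (i) already works.} The fix is one line, and it is the route you set aside as insufficient. You already noted that $\chi$ factors through $\widetilde G$. By \eqref{eq:finite-extension}, $\T_{1,1}(p)$ maps onto $A$, which is an elementary abelian $p$-group. So every value of $\chi$ on $A$ has order dividing both $p$ and $|\F_p^\times|=p-1$, hence is trivial. In particular $\chi(T_a)^p=\chi(T_a^p)=1$, since $T_a^p\in\T_{1,1}(p)$. Together with $\chi(T_a)^{p-1}=1$, this forces $\chi(T_a)=1$. Your observation that $\Gamma_{1,1}\cong B_3$ has abelianization generated by a single twist then finishes the proof.

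This is essentially the paper's $p=3$ argument. There it is phrased as: $\chi|_A=1$, so $\chi$ factors through $\SL_2(\F_3)$, which is generated by the order-$3$ elements $u(1)$ and $\ell(1)$. The argument works verbatim for every odd $p$. It gives the uniform proof you were after, with no chain-level or disk-inclusion input.
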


\begin{proof}
  By Theorem~\ref{thm:punctured-threshold}, the $p$-primary torsion
  subgroup of $H_{2p-2}(B_{2p}(\Sigma_{1,1}^\circ);\Z_{(p)})$ is $T_p=\Z_{(p)}/(p)\{\tau_p\}$.
  This subgroup is characteristic.
  % Indeed, every mapping class acts on homology by an automorphism, and $p^r x=0$ implies $p^r(g\cdot x)=g\cdot(p^r x)=0$.
  Hence the $\Gamma_{1,1}$-action preserves $T_p$ and therefore also its
  non-zero image $\F_p\{\beta_0\}\subset N$ under the direct-sum decomposition
  established in Section~\ref{sec:mod-p-reduction}.
  Since the action on $N$ factors through $\widetilde G$, this is a $\widetilde G$-stable line.
  Let $\chi:\widetilde G\to\F_p^\times$ be its character.
  If $p\geq5$, then $\chi$ is trivial because $\widetilde G$ is perfect by Lemma~\ref{lem:perfect-quotient}.

  Suppose that $p=3$.
  Since $A$ is an elementary abelian $3$-group and $\F_3^\times$ has order $2$, the restriction of $\chi$ to $A$ is trivial.
  Thus $\chi$ factors through $\SL_2(\F_3)$.
  The upper and lower unipotent matrices $u(1)$ and $\ell(1)$ generate $\SL_2(\F_3)$ and both have order $3$.
  Their images under a character with values in $\F_3^\times$ are therefore trivial, so $\chi$ is trivial in this case as well.
  Thus every element of $\widetilde G$ fixes $\beta_0$, as claimed.
\end{proof}

\section{The source representation}
\label{sec:source-representation}

We compute the composition factors of the $\widetilde G$-module $M=H_{2p-3}(B_{2p-1}(\Sigma_{1,1}^\circ);\F_p)$, the source of the mod-$p$ Gysin map $\bar\partial:M\to N$.
We use a filtration to reduce the action to $\SL_2(\F_p)$ on the associated graded, identify the resulting complex with a small Chevalley--Eilenberg complex, and compute the required bidegree.

\subsection{The divided-power filtration}
\label{sec:divided-power-filtration}

We filter the Bianchi--Stavrou complex by divided-power degree in its surface variables.
On the associated graded, the quadratic correction term in~\eqref{eq:bs-action}
disappears, while the composition factors of the source homology are preserved.

Let $\cC=\cC_{\Z}\otimes\F_p$, where $\cC_{\Z}$ is the integral
Bianchi--Stavrou complex~\eqref{eq:bs-cellular-complex}, and write
$\cC^{(k)}$ for its summand of weight $-k$.  Each fixed weight is
finite-dimensional, and $\cC$ uses the regraded cellular degree, so
equivariant Poincar\'e--Lefschetz duality gives~\cite[Proposition~2.3]{bianchiHomologyConfigurationSpaces2024}
\begin{equation}
  H_q\bigl(\cC^{(k)}\bigr)
  \cong \HBM_{2k+q}(B_k(\Sigma_{1,1}^\circ);\F_p),
  \qquad
  H_q\bigl(\cC^{(k)}\bigr)^\vee
  \cong H_{-q}(B_k(\Sigma_{1,1}^\circ);\F_p).
  \label{eq:bs-duality-regrading}
\end{equation}
In particular, $M=H_{2p-3}(B_{2p-1}(\Sigma_{1,1}^\circ);\F_p)$ is dual to
$H_{-(2p-3)}(\cC^{(2p-1)})$, whose Borel--Moore degree is $2p+1$.

After reducing~\eqref{eq:bs-bar-complex} modulo $p$, a basis term has the form
\begin{equation*}
  1\otimes a_1\otimes\cdots\otimes a_b\otimes
  x_{i_1}\cdots x_{i_t}y_a^{[r]}y_b^{[s]},
\end{equation*}
where each $a_i=x^{\epsilon_i}y^{[m_i]}\neq1$ is a basis monomial of
$\bar\cA\otimes\F_p$.  Its \emph{surface divided-power degree} is $r+s$;
divided powers in the bar entries $a_i$ are not counted.  Let
$F_d\cC$ be spanned by terms of surface divided-power degree at most
$d$.  This is an increasing finite filtration in each fixed weight.  With
$F_{-1}\cC=0$, we write
\begin{equation*}
  \gr_F\cC=\bigoplus_{d\geq0} F_d\cC/F_{d-1}\cC
\end{equation*}
for the associated-graded chain complex.

\begin{lemma}
  \label{lem:filtered-action}
  The $\widetilde G$-action preserves the divided-power filtration on $\cC$, and its
  quadratic correction term in~\eqref{eq:bs-action} strictly lowers the surface divided-power degree.
  The associated-graded action factors through $\SL_2(\F_p)$ and is the
  standard linear action on both pairs $(x_a,x_b)$ and $(y_a,y_b)$.
\end{lemma}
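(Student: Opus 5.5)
The argument is a direct computation with the action formula~\eqref{eq:bs-action}, using that the action fixes bar entries and is by ring automorphisms of $\cU_{\F_p}$. Since $K^\xi$ acts trivially on $\cC$ (Section~\ref{sec:bs-action}), it suffices to work with representatives $\phi\in\Gamma_{1,1}$ acting on a basis term $1\otimes a_1\otimes\cdots\otimes a_b\otimes x_{i_1}\cdots x_{i_t}y_a^{[r]}y_b^{[s]}$. The bar part $a_1\otimes\cdots\otimes a_b$ is untouched, so I only need to track the surface factor.

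\textbf{Step 1: the generators.} By~\eqref{eq:bs-action}, each $x_i$ maps to a linear combination of $x_a,x_b$, so exterior monomials keep surface divided-power degree $0$. Each $y_i$ maps to
\[
\phi(y_i)=L_i+Q_i,\qquad L_i=[\phi(\gamma_i)]_y,\quad Q_i=[c_2(\phi(\gamma_i))]_x,
\]
where $L_i$ is linear in $y_a,y_b$ and $Q_i$ is a multiple of $x_ax_b$, of divided-power degree $0$.

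\textbf{Step 2: divided powers.} The key point is the formula for $\phi(y_i^{[m]})$, which must be handled with care because $m$ may be $\ge p$, where $m!$ is not invertible mod $p$. I would work integrally in $\cU$, which is torsion-free, and then reduce. The divided-power identities $(u+v)^{[m]}=\sum_{j}u^{[m-j]}v^{[j]}$ and $(\lambda y_a+\mu y_b)^{[n]}=\sum \lambda^{k}\mu^{n-k}y_a^{[k]}y_b^{[n-k]}$ hold integrally in $\Gamma$. Moreover $Q_i^{[j]}=0$ for $j\ge2$: the element $Q_i^{[j]}$ is determined by $j!\,Q_i^{[j]}=Q_i^j$, and $Q_i^j=0$ because $(x_ax_b)^2=0$ and $\cU$ is torsion-free. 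This gives
\[
\phi(y_i^{[m]})=L_i^{[m]}+L_i^{[m-1]}Q_i,
\]
an integral identity that reduces mod $p$. Expanding $L_i^{[m]}$ with the binomial identity expresses it as a combination of $y_a^{[k]}y_b^{[m-k]}$, of surface degree exactly $m$. The correction $L_i^{[m-1]}Q_i$ has degree $m-1$.

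\textbf{Step 3: products.} Multiplying over the factors of a monomial, degrees add because $y_a^{[k]}y_a^{[l]}=\binom{k+l}{k}y_a^{[k+l]}$. Hence $\phi(F_d\cC)\subseteq F_d\cC$, and every term involving some $Q_i$ lies in $F_{d-1}\cC$. Modulo $F_{d-1}$, the action therefore becomes $x_i\mapsto[\phi(\gamma_i)]_x$ and $y_i\mapsto[\phi(\gamma_i)]_y$, extended multiplicatively to divided powers. This is the standard linear action of the matrix of $\phi$ on $H$, simultaneously on both pairs $(x_a,x_b)$ and $(y_a,y_b)$, and it depends only on the image of $\phi$ in $\SL(H)$. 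In particular the mod-$p$ Torelli group, and so $A$, acts trivially on $\gr_F\cC$, so the graded action factors through $\SL_2(\F_p)$.

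\textbf{Step 4: compatibility with the differential.} Finally I would check that the differential preserves $F$, so that $\gr_F\cC$ is a chain complex and the statement makes sense. The bar faces multiply the surface factor by the image of a bar entry under~\eqref{eq:boundary-word-action}, which lies in $\Lambda(x_a,x_b)$ and has degree $0$. Merging adjacent bar entries does not touch the surface factor. So $F$ is a filtration by subcomplexes, and it is preserved by the action.

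I expect the main obstacle to be Step 2 for $m\ge p$. There one cannot divide $\phi(y_i)^m$ by $m!$ mod $p$, so the integral lift and the vanishing of $Q_i^{[j]}$ for $j\ge 2$ are essential.
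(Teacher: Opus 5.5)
Your proposal is correct and is essentially the paper's proof. Like the paper, you lift to the torsion-free integral algebra $\cU$, use $(x_ax_b)^2=0$ to get $\phi(y_i^{[m]})=L_i^{[m]}+Q_iL_i^{[m-1]}$ for all $m$ (including $m\geq p$), reduce modulo $p$, and observe that the linear part preserves surface divided-power degree and depends only on the image in $\SL_2(\F_p)$, while the correction lowers that degree by one. The differences are cosmetic: the paper gets the divided-power formula by expanding $(L_i+Q_i)^m$ and cancelling $m!$ in $\cU$, which is also what justifies your binomial identity with $v=Q_i\notin\Gamma(y_a,y_b)$, and your Step~4 appears in the paper as the separate next lemma.
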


\begin{proof}
  Fix $\phi\in\Gamma_{1,1}$ and $i\in\{a,b\}$.  Put
  $\widetilde L_i=[\phi(\gamma_i)]_y$ and
  $\widetilde z=x_ax_b\in\cU$.  Since $\Lambda^2H_{\Z}$ is free on
  $[\gamma_a]\wedge[\gamma_b]$, there is a unique $q_i\in\Z$ such that
  $[c_2(\phi(\gamma_i))]_x=q_i\widetilde z$.  Thus~\eqref{eq:bs-action}
  gives $\phi(y_i)=\widetilde L_i+q_i\widetilde z$ in $\cU$.
  Let $L_i\in\F_p\{y_a,y_b\}$, $\lambda_i\in\F_p$, and
  $z\in\cU_{\F_p}$ be the respective reductions modulo $p$ of
  $\widetilde L_i$, $q_i$, and $\widetilde z$.  Then
  $\phi(y_i)=L_i+\lambda_i z$ in $\cU_{\F_p}$.

  Since the action is by ring automorphisms and $m!y_i^{[m]}=y_i^m$, for
  $m\geq1$ we have in the torsion-free integral algebra $\cU$
  \begin{align*}
    m!\,\phi(y_i^{[m]})
    &=(\widetilde L_i+q_i\widetilde z)^m\\
    &=\widetilde L_i^m
    +mq_i\widetilde z\widetilde L_i^{m-1}\\
    &=m!\bigl(\widetilde L_i^{[m]}
    +q_i\widetilde z\widetilde L_i^{[m-1]}\bigr),
  \end{align*}
  where the middle equality uses $\widetilde z^2=0$.  Cancelling $m!$
  integrally and then
  reducing modulo $p$ gives, for $m\geq1$,
  \begin{equation*}
    \phi\bigl(y_i^{[m]}\bigr)
    =
    L_i^{[m]}+\lambda_izL_i^{[m-1]}.
  \end{equation*}
  In particular, this remains valid when $m\geq p$.  Over either coefficient
  ring,
  \begin{equation*}
    (\alpha y_a+\beta y_b)^{[m]}
    =
    \sum_{r+s=m}\alpha^r\beta^s y_a^{[r]}y_b^{[s]}.
  \end{equation*}
  Thus the linear part preserves surface divided-power degree, whereas each
  term involving the quadratic correction lowers it by one.  The action is
  multiplicative and fixes the bar entries, so it preserves
  $F_d\cC$.

  On the associated graded, all the lower-degree correction terms disappear.
  By~\eqref{eq:bs-action}, the remaining action is the mod-$p$ homological
  action on each of the two distinct linear spans of $(x_a,x_b)$ and
  $(y_a,y_b)$.
  Since $A$ is the kernel in~\eqref{eq:finite-extension}, it acts trivially,
  and the action factors through
  $\widetilde G/A\cong\SL_2(\F_p)$ as the standard linear action on both pairs.
\end{proof}

\begin{lemma}
  \label{lem:homology-associated-graded}
  The differential on $\cC$ is homogeneous of surface divided-power degree
  zero.  Consequently,
  \begin{equation}
    \gr_F H_*(\cC)\cong H_*(\gr_F\cC).
    \label{eq:homology-associated-graded}
  \end{equation}
\end{lemma}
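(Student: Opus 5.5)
The plan is to check the face maps of the bar complex~\eqref{eq:bs-bar-complex} one at a time. Each should send a basis term of surface divided-power degree $d$ to a combination of terms of the same surface degree $d$. Once the differential is known to be homogeneous, the filtration $F_d\cC$ splits $\cC$ as a direct sum of subcomplexes $\cC_d$, where $\cC_d$ is spanned by the basis terms of surface degree exactly $d$. Homology then splits as $H_*(\cC)=\bigoplus_d H_*(\cC_d)$. The filtration induced on homology is $F_dH_*(\cC)=\bigoplus_{d'\le d}H_*(\cC_{d'})$, so
\[
\gr_F H_*(\cC)\cong\bigoplus_d H_*(\cC_d)\cong H_*(\gr_F\cC),
\]
and~\eqref{eq:homology-associated-graded} follows. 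Equivalently, the spectral sequence of the filtration degenerates at the first page with no extension problems.

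The face maps fall into three kinds. The initial face vanishes, so there is nothing to check. The interior faces merge adjacent bar entries $a_i\otimes a_{i+1}\mapsto \pm a_ia_{i+1}$ in $\bar\cA$. They leave the surface factor $x_{i_1}\cdots x_{i_t}y_a^{[r]}y_b^{[s]}$ untouched, so the surface degree $r+s$ is unchanged. The product $a_ia_{i+1}$ might vanish, but it never produces surface variables.

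The last face applies~\eqref{eq:boundary-word-action} to the final entry $a_b$ and multiplies the result into the surface factor. By~\eqref{eq:boundary-word-action}, the image of any basis monomial $x^{\epsilon}y^{[m]}$ of $\bar\cA$ lies in the subalgebra generated by $x_a,x_b$. Indeed, $x\mapsto0$, $y\mapsto 2x_ax_b$, $y^{[m]}\mapsto0$ for $m\ge2$, and hence $xy^{[m]}\mapsto0$. The image contains no $y_a,y_b$, so multiplying it into $\cU_{\F_p}$ leaves the exponents $r,s$ of the divided powers $y_a^{[r]}y_b^{[s]}$ fixed. The only possible effect is to multiply by $2x_ax_b$, which is zero whenever $x_a$ or $x_b$ is already present. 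Hence this face also preserves surface degree.

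I expect no real obstacle; the main care is bookkeeping. One has to confirm that the cellular differential identified by Bianchi--Stavrou (their Proposition~4.14 and Proposition~6.1) has exactly these faces and no further terms that could create surface $y$-variables. For example, a collision of the rightmost bar with the bouquet must be encoded solely through the ring map $\cA\to\cU$, which is how the model is described in Section~\ref{sec:bs-action}. Given that, homogeneity is immediate. It is worth stressing that homogeneity holds for the differential even though the $\widetilde G$-action is only filtered (Lemma~\ref{lem:filtered-action}). So the splitting $H_*(\cC)\cong\bigoplus_d H_*(\cC_d)$ is a splitting of vector spaces, not of $\widetilde G$-modules. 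This is still enough to identify composition factors via the associated graded.
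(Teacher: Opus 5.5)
Your proposal is correct and follows essentially the same route as the paper. In both, the internal faces leave the surface factor untouched, and the final face multiplies it by an element of $\Lambda(x_a,x_b)$ via~\eqref{eq:boundary-word-action}, so $\cC$ splits as a direct sum of subcomplexes of exact surface divided-power degree and $\gr_F H_*(\cC)\cong H_*(\gr_F\cC)$ follows degree by degree. Your closing remark that this splitting is only one of vector spaces is accurate. What Corollary~\ref{cor:filtered-composition-factors} actually needs is that the resulting identification is the canonical one from the degenerate spectral sequence, which is natural for the filtration-preserving $\widetilde G$-action.
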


\begin{proof}
  The bar differential is homogeneous in surface divided-power degree.
  Indeed, its internal faces leave the surface factor unchanged, while its
  final face multiplies that factor by the image of
  \eqref{eq:boundary-word-action}, which lies in
  $\Lambda(x_a,x_b)$.  Hence $\cC$ is the direct sum of its
  subcomplexes of exact surface divided-power degree.  Taking homology
  degree by degree gives~\eqref{eq:homology-associated-graded}.
\end{proof}

\begin{corollary}
  \label{cor:filtered-composition-factors}
  The filtration on $\cC$ induces a finite $\widetilde G$-stable filtration
  on
  \begin{equation*}
    M=H_{2p-3}(B_{2p-1}(\Sigma_{1,1}^\circ);\F_p).
  \end{equation*}
  The action on its associated graded $\gr_F M$ factors through
  $\SL_2(\F_p)$, and the Jordan--Hölder multiset of $M$ as a
  $\widetilde G$-module is obtained by inflating the Jordan--Hölder multiset
  of $\gr_F M$ along
  $\widetilde G\twoheadrightarrow\SL_2(\F_p)$.
\end{corollary}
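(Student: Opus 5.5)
Via the duality \eqref{eq:bs-duality-regrading}, $M$ is the $\F_p$-dual of $H_q(\cC^{(2p-1)})$ with $q=-(2p-3)$. We will construct the filtration on $V\coloneqq H_q(\cC^{(2p-1)})$ and then transport it to $M$ by duality.

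\emph{Filtration on $V$.} Set
\[
F_dV\coloneqq\operatorname{im}\bigl(H_q(F_d\cC^{(2p-1)})\to H_q(\cC^{(2p-1)})\bigr).
\]
By Lemma~\ref{lem:filtered-action}, $\Gamma_{1,1}$ preserves each chain-level $F_d\cC$. Since the action on $\cC$ factors through $\widetilde G$ (Section~\ref{sec:bs-action}), each $F_dV$ is a $\widetilde G$-submodule. The weight is fixed, so the filtration is finite, exhausting, and starts at $F_{-1}V=0$.

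\emph{The graded pieces.} Lemma~\ref{lem:homology-associated-graded} splits $\cC$ as a direct sum of subcomplexes $\cC_{[d]}$ of exact surface divided-power degree $d$. Hence $F_d\cC=\bigoplus_{e\le d}\cC_{[e]}$, and
\[
F_dV/F_{d-1}V\cong H_q(\cC_{[d]})\cong H_q(F_d\cC/F_{d-1}\cC).
\]
The splitting itself need not be $\widetilde G$-stable, because the quadratic correction lowers $d$. The isomorphism is still $\widetilde G$-equivariant, however, since both ends are functorial quotients: the map $H_q(F_d\cC)\to H_q(F_d\cC/F_{d-1}\cC)$ is equivariant and induces this isomorphism. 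By Lemma~\ref{lem:filtered-action}, $\widetilde G$ acts on $F_d\cC/F_{d-1}\cC$ through $\SL_2(\F_p)$, so $\gr_FV$ is an $\SL_2(\F_p)$-module inflated to $\widetilde G$.

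\emph{Dualization.} On $M\cong V^\vee$, take the annihilator filtration $F^{d}M\coloneqq (F_{d-1}V)^\perp$. This is a $\widetilde G$-stable finite filtration with $\gr M\cong(\gr V)^\vee$ equivariantly, using the contragredient action, and this action still factors through $\SL_2(\F_p)$. Re-index it as an increasing filtration. One point to check is that the duality isomorphism in \eqref{eq:bs-duality-regrading} is $\Gamma_{1,1}$-equivariant, possibly up to an orientation character. The mapping classes fix the boundary and preserve orientation, so Poincar\'e--Lefschetz duality is natural for them; we take this from \cite[Proposition~2.3]{bianchiHomologyConfigurationSpaces2024}.

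\emph{Jordan--H\"older.} Refining the filtration by composition series of each graded piece shows that the composition factors of $M$ are the union of those of the $\gr$ pieces, counted with multiplicity. A composition series of an $\SL_2(\F_p)$-module inflates to a composition series over $\widetilde G$, since inflation preserves simplicity because $\widetilde G\to\SL_2(\F_p)$ is surjective. This gives the Jordan--H\"older multiset claimed in Corollary~\ref{cor:filtered-composition-factors}.

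The main obstacle is the equivariance of the identification $\gr_FV\cong H(\gr_F\cC)$ in the presence of the non-split action, together with the equivariance of duality. The first is handled by working only with the canonical maps $H(F_d)\to H(F_d/F_{d-1})$. The second rests on naturality of duality for orientation-preserving, boundary-fixing homeomorphisms.
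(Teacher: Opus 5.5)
Your proposal is correct and follows the paper's route: transport the surface divided-power filtration to $M$ through the duality \eqref{eq:bs-duality-regrading}, use Lemmas~\ref{lem:filtered-action} and~\ref{lem:homology-associated-graded} to see that the graded pieces are inflated $\SL_2(\F_p)$-modules, and take the multiset union of Jordan--H\"older factors. Beyond the paper's condensed argument, you also check two points it leaves implicit: that $\gr_F V\cong H_*(\gr_F\cC)$ is equivariant even though the degree splitting is not, and that the annihilator filtration on $V^\vee$ carries the contragredient action.
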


\begin{proof}
  Passing to the cochain dual via~\eqref{eq:bs-duality-regrading} gives a
  $\widetilde G$-stable filtration on $M$.
  By Lemmas~\ref{lem:filtered-action}
  and~\ref{lem:homology-associated-graded}, its successive quotients are
  $\SL_2(\F_p)$-modules inflated along
  $\widetilde G\twoheadrightarrow\SL_2(\F_p)$.  The Jordan--Hölder multiset
  of a finite filtered module is the multiset union of those of its
  successive quotients, which proves the claim.
  % Unnecessary:
  % The filtration need not split $\widetilde G$-equivariantly: the quadratic
  % correction term may still change the extension data between successive pieces,
  % but it cannot change their simple factors.
\end{proof}

\subsection{The low-weight Chevalley--Eilenberg model and comparison}
\label{sec:source-input}

Lie and Chevalley--Eilenberg models for configuration-space homology arise from Knudsen's description via factorization homology, and their positive-characteristic low-weight forms are developed through spectral-Lie methods by Chen--Zhang and Zhang~\cite{knudsenBettiNumbersStability2017,chenModHomologyUnordered2024,zhangQuillenHomology2025}.
We identify the associated-graded Bianchi--Stavrou complex in weights below $2p$ with the corresponding small Chevalley--Eilenberg complex.
The resulting $\SL_2(\mathbb F_p)$-equivariant comparison turns the required source representation into a concrete bidegree calculation.

Since $\Sigma_{1,1}^\circ$ is connected and noncompact, its compactly supported
cohomology has no degree-zero class. Using the natural identification
$H_c^*(\Sigma_{1,1}^\circ;\F_p)\cong
H^*(\Sigma_{1,1},\partial\Sigma_{1,1};\F_p)$, choose a basis
\begin{equation*}
  H_c^*(\Sigma_{1,1}^\circ;\F_p)
  =\F_p\{a,b,c\},
  \qquad
  |a|=|b|=1,
  \qquad
  |c|=2,
\end{equation*}
with, up to graded commutativity, the single non-zero product
\begin{equation}
  a\smile b=c.
  \label{eq:punctured-cup-product}
\end{equation}
Via the intersection pairing, identify
$H_c^1(\Sigma_{1,1}^\circ;\F_p)$ with the standard module
$H=H_1(\Sigma_{1,1};\F_p)$, and use the same letters $a,b$ for the resulting
symplectic basis.
The class $c=a\smile b$ spans $\det(H)$ and is fixed by
$\SL_2(\F_p)$.

Let $\mathfrak l_{\mathrm{small}}$ be the two-step \emph{shifted} Lie algebra with
basis $x_2,w_2$, where $x_2$ has degree $2$ and weight $1$, $w_2$ has degree
$3$ and weight $2$, and
\begin{equation*}
  [x_2,x_2]=w_2,
  \qquad
  [x_2,w_2]=[w_2,w_2]=0.
\end{equation*}
For $p\geq5$, this is the free shifted Lie algebra on $x_2$.
For $p=3$, $\mathfrak l_{\mathrm{small}}$ denotes the explicitly defined
two-step algebra above; the comparison lemma below uses this presentation and
does not require it to be free on $x_2$.
Set
\begin{equation}
  \mathfrak g_{\mathrm{small}}
  =H_c^{-*}(\Sigma_{1,1}^\circ;\F_p)
  \otimes\mathfrak l_{\mathrm{small}}.
  \label{eq:punctured-shifted-lie-algebra}
\end{equation}

The superscript $-*$ records the regrading from cohomological to homological
degrees: a class in $H_c^j$ is placed in homological degree $-j$.
We suppress the tensor symbol in pure tensors, e.g., $ax_2 = a\otimes x_2$.
If $\theta\in H_c^j(\Sigma_{1,1}^\circ;\F_p)$ and $v$ is homogeneous in
$\mathfrak l_{\mathrm{small}}$, then
\begin{equation}
  \deg(\theta v)=|v|-j,
  \qquad
  \operatorname{weight}(\theta v)=\operatorname{weight}(v).
  \label{eq:decorated-class-degree}
\end{equation}
The six elements in Table~\ref{tab:punctured-ce-generators} form a homogeneous
basis of $\mathfrak g_{\mathrm{small}}$.

\begin{table}[htbp]
  \centering
  \begin{tabular}{@{}lllll@{}}
    \toprule
    Element & Weight & Degree & $\SL_2(\F_p)$-type & CE factor \\
    \midrule
    $E_0=cx_2$ & $1$ & $0$ & $\mathbf 1$ & $\Gamma$ (even) \\
    $O_a=ax_2,\ O_b=bx_2$ & $1$ & $1$ & $H$ & $\Lambda$ (odd) \\
    $O_2=cw_2$ & $2$ & $1$ & $\mathbf 1$ & $\Lambda$ (odd) \\
    $F_a=aw_2,\ F_b=bw_2$ & $2$ & $2$ & $H$ & $\Gamma$ (even) \\
    \bottomrule
  \end{tabular}
  \caption{The Chevalley--Eilenberg cogenerators of
  $\mathfrak g_{\mathrm{small}}$.}
  \label{tab:punctured-ce-generators}
\end{table}

The Chevalley--Eilenberg chain complex of
$\mathfrak g_{\mathrm{small}}$ is
\begin{equation}
  \mathcal E
  \coloneqq CE(\mathfrak g_{\mathrm{small}})
  =\Gamma(E_0,F_a,F_b)\otimes\Lambda(O_a,O_b,O_2).
  \label{eq:punctured-ce-algebra}
\end{equation}
We use the same symbols for the six basis elements of
$\mathfrak g_{\mathrm{small}}$ and for the corresponding Chevalley--Eilenberg
cogenerators.
Each odd cogenerator occurs at most once in a monomial, while an even cogenerator $z$
has divided powers $z^{[r]}$ for $r\geq0$.
Weight and degree are additive;
in particular,
\begin{equation*}
  F_a^{[i]}F_b^{[r-i]},
  \qquad 0\leq i\leq r,
\end{equation*}
form the standard basis of $\Gamma^r H$.

The cup product~\eqref{eq:punctured-cup-product} gives the only non-zero bracket between the elements in Table~\ref{tab:punctured-ce-generators}.
Thus
\begin{equation}
  [O_a,O_b]=O_2 \text{ in } \mathfrak{g}_{\mathrm{small}},
  \qquad
  d(O_aO_b)=O_2 \text{ in } \mathcal E,
  \label{eq:punctured-ce-differential}
\end{equation}
and the differential vanishes on each cogenerator.

We now compare $\mathcal E$ with the associated graded of the
Bianchi--Stavrou complex $\gr_F\cC$ introduced in Section~\ref{sec:divided-power-filtration}.

Write $\mathcal E^{(k)}$ for the weight-$k$ summand of $\mathcal E$.
For each weight $k$, define a chain complex $\mathcal D^{(k)}$ by
\begin{equation*}
  \mathcal D^{(k)}_d
  =\operatorname{Hom}_{\F_p}
  \bigl((\gr_F\cC^{(k)})_{-d},\F_p\bigr),
\end{equation*}
with differential dual to that of $\gr_F\cC^{(k)}$.
By Lemma~\ref{lem:homology-associated-graded} and equivariant
Poincar\'e--Lefschetz duality, for every $d$ there is an
$\SL_2(\F_p)$-equivariant isomorphism
\begin{equation*}
  H_d(\mathcal D^{(k)})
  \cong \gr_F H_d(B_k(\Sigma_{1,1}^\circ);\F_p),
\end{equation*}
where $F$ is the filtration on configuration-space homology induced by
the surface divided-power filtration $F_\bullet\cC^{(k)}$ under this
duality.

\begin{lemma}
  \label{lem:bs-ce-comparison}
  For every $k<2p$, there is an $\SL_2(\F_p)$-equivariant quasi-isomorphism
  \begin{equation}
    \mathcal D^{(k)}\simeq\mathcal E^{(k)}.
    \label{eq:bs-ce-quasi-isomorphism}
  \end{equation}
  In particular, for every $d$, there is an isomorphism of $\SL_2(\F_p)$-modules
  \begin{equation}
    \gr_F H_d(B_k(\Sigma_{1,1}^\circ);\F_p)
    \cong H_d\bigl(\mathcal E^{(k)}\bigr).
    \label{eq:low-weight-bs-ce-comparison}
  \end{equation}
\end{lemma}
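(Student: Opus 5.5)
The plan is to compute both sides as explicit small complexes and match them term by term. The key observation comes from Lemma~\ref{lem:homology-associated-graded}: the Bianchi--Stavrou differential is homogeneous of surface divided-power degree zero. So $\gr_F\cC^{(k)}$ is, as a chain complex, just the weight-$(-k)$ part of the mod-$p$ reduced bar complex
\begin{equation*}
  \overline B_\bullet(\F_p,\cA_{\F_p},\cU_{\F_p}),
\end{equation*}
with $\cU_{\F_p}$ an $\cA_{\F_p}$-module via~\eqref{eq:boundary-word-action}. Only the group action changes, and by Lemma~\ref{lem:filtered-action} it becomes the standard linear $\SL_2(\F_p)$-action on $(x_a,x_b)$ and on $(y_a,y_b)$, fixing all bar entries. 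Hence $H_*(\gr_F\cC^{(k)})$ is the weight-$(-k)$ part of $\operatorname{Tor}^{\cA_{\F_p}}_*(\F_p,\cU_{\F_p})$, equivariantly. The complex $\mathcal D^{(k)}$ then computes the dual of this Tor group.

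\textbf{Step 1: truncate $\cA$.} Over $\F_p$ one has
\begin{equation*}
  \Gamma_{\F_p}(y)\cong\bigotimes_{i\ge0}\F_p[y^{[p^i]}]/\bigl((y^{[p^i]})^p\bigr).
\end{equation*}
The generators $y^{[p^i]}$ with $i\ge1$ have weight $-2p^i\le-2p$, so they do not affect weights $-k$ with $k<2p$. In this range I will therefore replace $\cA_{\F_p}$ by $\Lambda(x)\otimes\F_p[y]/(y^p)$. For the truncated polynomial algebra, I will use its $2$-periodic minimal resolution. Its first class $\sigma y$ has weight $-2$, and the next, transpotence, class has weight $-2p$, which is out of range.

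\textbf{Step 2: a small Koszul complex.} For $\Lambda(x)$ the minimal resolution contributes the divided powers $\Gamma(\sigma x)$ on a class of weight $-1$. This yields a small Koszul-type complex
\begin{equation*}
  \mathcal K=\Gamma(\sigma x)\otimes\Lambda(\sigma y)\otimes\cU_{\F_p},
  \qquad
  d(\sigma y)=2x_ax_b,
\end{equation*}
with $\sigma x$ a cycle, since $x$ acts by zero. I will then build an explicit comparison map $\overline B_\bullet\to\mathcal K$, or the reverse. The natural candidate is the standard map sending a bar word in $x$ and $y$ to its divided-power and exterior shadow, and it is quasi-isomorphic in weights $>-2p$. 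Because this map only involves bar entries and the $\cU$-factor is carried along unchanged, it is $\SL_2(\F_p)$-equivariant.

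\textbf{Step 3: dualize and match with $\mathcal E$.} Dualizing $\mathcal K$ gives the following dictionary:
\begin{itemize}
  \item $x_a,x_b\leftrightarrow O_a,O_b$ (odd, weight $1$, type $H$);
  \item $y_a,y_b\leftrightarrow F_a,F_b$ (even, weight $2$, type $H$);
  \item $\sigma x\leftrightarrow E_0$ (even, weight $1$, trivial);
  \item $\sigma y\leftrightarrow O_2$ (odd, weight $2$, trivial).
\end{itemize}
The dual of $d(\sigma y)=2x_ax_b$ is $d(O_aO_b)=2O_2$, which after rescaling $O_2$ by the unit $2$ is~\eqref{eq:punctured-ce-differential}. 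I still need to check the following points.
\begin{itemize}
  \item The regraded degrees $b-n$ of Section~\ref{sec:bs-action}, negated, agree with Table~\ref{tab:punctured-ce-generators}.
  \item The dual of divided powers is compatible with $\Gamma$ on the $\mathcal E$ side. Below weight $2p$ the relevant exponents of $F_a,F_b$ are $<p$, so $\Gamma^r H\cong\operatorname{Sym}^rH\cong V_r$ is self-dual, as recorded in Section~\ref{sec:rep-conventions}. The same holds for $E_0$ and $\sigma x$.
  \item The identification of $(x_a,x_b)$ with $(a,b)$ is equivariant; the duality $H\cong H^\vee$ via $\omega$ handles this.
\end{itemize}
The second statement~\eqref{eq:low-weight-bs-ce-comparison} then follows by combining this with the isomorphism $H_d(\mathcal D^{(k)})\cong\gr_FH_d(B_k(\Sigma_{1,1}^\circ);\F_p)$ stated just before the lemma.

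\textbf{Main obstacle.} I expect Steps 1--2 to be the hard part: proving that the truncated Koszul model is equivariantly quasi-isomorphic to the bar complex exactly in weights $>-2p$, and not merely that they have the same Betti numbers. I would first try an explicit filtration of the bar complex by the number of $x$-letters, reducing to the known minimal resolutions of $\Lambda(x)$ and $\F_p[y]/(y^p)$. The remaining work is then checking the sign and the factor $2$ in the last face.
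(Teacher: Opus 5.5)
Your proposal is correct and is essentially the paper's argument. In weights below $2p$ you replace $\Gamma(y)$ by a polynomial-type algebra: you use the subalgebra $\F_p[y]/(y^p)$ and its periodic resolution, whose second generator lies in weight $-2p$, whereas the paper maps $\F_p[t]\to\Gamma(y)$ and uses the two-term Koszul resolution. You then let $\Lambda(x)$ contribute the divided powers of $E_0$ and dualize using $H^\vee\cong H$, self-duality of $\Gamma^rH$ for $r<p$, and rescaling $O_2$ by $2$. The only other difference is that you stay on the $\operatorname{Tor}$ side with one combined Koszul complex, instead of first splitting the cobar complex via Bianchi--Stavrou's Eilenberg--Zilber/K\"unneth decomposition.

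The step you flag as the main obstacle is just the comparison theorem for free resolutions of $\F_p$ over the truncated algebra, tensored with $\cU$. Equivariance is then automatic: $\SL_2(\F_p)$ acts only on $\cU$, and it does so by $\cA$-linear automorphisms because it fixes $x_ax_b$.
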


\begin{proof}
  We first identify the dual cellular model with an associated-graded cobar
  complex and split it into three factors.  We then compare the factors with
  the three factors of $\mathcal E$; the bound $k<2p$ is used precisely in the
  polynomial--divided-power comparison for the middle factor.

  For an augmented algebra $R$ and a left $R$-module $U$, the reduced
  two-sided bar complex and the corresponding free $R$-resolution are
  \begin{equation*}
    \bar{B}_\bullet(\F_p,R,U)
    =\F_p\otimes\bar{R}^{\otimes\bullet}\otimes U,
    \qquad
    B_\bullet(R,R,U)
    =R\otimes\bar{R}^{\otimes\bullet}\otimes U.
  \end{equation*}
  The first computes $\F_p\otimes_R^{\mathbf L}U$; since each weight is
  finite-dimensional, its weightwise dual is the cobar cochain complex
  $\operatorname{Hom}_R(B_\bullet(R,R,U),\F_p)$ computing
  $\operatorname{Ext}_R(U,\F_p)$.

  Applying this construction to~\eqref{eq:bs-bar-complex},
  Bianchi--Stavrou identify the weightwise dual of $\cC$ with the cobar
  complex used in their
  Theorem~B~\cite[Propositions~6.1 and~6.3]{bianchiHomologyConfigurationSpaces2024}.
  Since the surface divided-power filtration is a direct sum by exact degree,
  as in the proof of Lemma~\ref{lem:homology-associated-graded}, the same
  description identifies $\mathcal D^{(k)}$ with the weight-$k$ summand of the
  corresponding associated-graded cobar complex.
  The Eilenberg--Zilber map and the K\"unneth decomposition in the proof of
  that theorem give a quasi-isomorphism from this cobar complex to the tensor
  product of the three cobar complexes computing
  \begin{equation}
    \operatorname{Ext}_{\Lambda(x)}(\F_p,\F_p)
    \otimes
    \operatorname{Ext}_{\Gamma(y)}(M_1,\F_p)
    \otimes
    \operatorname{Hom}_{\F_p}
    \bigl(\Gamma(y_a,y_b),\F_p\bigr),
    \label{eq:bs-ext-kunneth}
  \end{equation}
  where
  \begin{equation*}
    M_1=\Lambda(x_a,x_b),
    \qquad
    y\cdot1=2x_ax_b,
  \end{equation*}
  and $y$ annihilates $x_a,x_b,x_ax_b$~\cite[Definition~6.7 and the proof of
  Theorem~B]{bianchiHomologyConfigurationSpaces2024}.
  After passage to the
  associated graded, $\SL_2(\F_p)$ acts linearly on both pairs
  $(x_a,x_b)$ and $(y_a,y_b)$ by
  Lemma~\ref{lem:filtered-action}.
  It fixes the boundary variables $x,y$, preserves
  $M_1=\Lambda(x_a,x_b)$ and $\Gamma(y_a,y_b)$ separately, and the relation
  $y\cdot1=2x_ax_b$ is equivariant because $x_ax_b$ spans the trivial
  determinant representation.  Thus it preserves the three-factor
  decomposition~\eqref{eq:bs-ext-kunneth}.  The bar and cobar constructions,
  the Eilenberg--Zilber map, and the K\"unneth comparison are natural for these
  algebra and module automorphisms, so this decomposition is
  $\SL_2(\F_p)$-equivariant.

  We compare the three cobar factors underlying~\eqref{eq:bs-ext-kunneth}
  with $\mathcal E$.
  First, the normalized bars
  \begin{equation*}
    [x|\cdots|x]^\vee,
    \qquad r\text{ copies of }x,
  \end{equation*}
  have zero differential because $x^2=0$.
  Sending each such bar to
  $E_0^{[r]}$ identifies the first factor, as a bigraded complex, with
  $\Gamma(E_0)$.

  Second, the weightwise dual of $\Gamma(y_a,y_b)$ gives the third factor
  in~\eqref{eq:bs-ext-kunneth}.
  In weight $k<2p$, its homogeneous degree in
  $y_a,y_b$ is at most $p-1$.
  All relevant factorials are therefore
  invertible in $\F_p$, and the usual symmetrization map, together with the
  symplectic identification $H^\vee\cong H$, gives an
  $\SL_2(\F_p)$-equivariant identification of this factor with
  \begin{equation*}
    \Gamma(F_a,F_b).
  \end{equation*}

  It remains to examine the middle factor.
  Let $\mathcal P=\F_p[t]$, with $t$ of weight $-2$, and regard $M_1$ as a
  $\mathcal P$-module by
  \begin{equation*}
    t\cdot1=2x_ax_b,
    \qquad
    t\cdot x_a=t\cdot x_b=t\cdot x_ax_b=0.
  \end{equation*}
  The homomorphism $\mathcal P\to\Gamma(y)$ sending $t$ to $y=y^{[1]}$ induces a map
  \begin{equation}
    \bar{B}_\bullet(\F_p,\mathcal P,M_1)
    \longrightarrow
    \bar{B}_\bullet(\F_p,\Gamma(y),M_1).
    \label{eq:polynomial-divided-power-bar-map}
  \end{equation}
  Give both algebras the trivial $\SL_2(\F_p)$-action.  The action on $M_1$
  commutes with their module actions, so this bar map and the standard Koszul
  replacement used below are $\SL_2(\F_p)$-equivariant.
  This map is an isomorphism in every total weight $-k$ with $k<2p$.
  Indeed, every bar tensor in such a weight involves divided powers whose
  exponents have sum less than $p$.  For $m<p$, the image of $t^m$ is
  \begin{equation*}
    y^m=m!\,y^{[m]},
  \end{equation*}
  and $m!$ is a unit in $\F_p$.  Moreover, whenever $m+m'<p$,
  \begin{equation*}
    \bigl(m!\,y^{[m]}\bigr)\bigl(m'!\,y^{[m']}\bigr)
    =(m+m')!\,y^{[m+m']},
  \end{equation*}
  so the internal bar faces agree under~\eqref{eq:polynomial-divided-power-bar-map}.
  The final bar face agrees because the $\mathcal P$-action just displayed is the
  restriction of the $\Gamma(y)$-action on $M_1$.

  We may therefore replace the middle cobar factor in weight $k<2p$ by the
  dual of the polynomial bar complex.
  The standard two-term Koszul resolution of the augmentation module over
  $\mathcal P$ gives the quasi-isomorphic cochain complex
  \begin{equation*}
    M_1^\vee\otimes\Lambda(\kappa_0)
    \cong
    \Lambda(O_a,O_b)\otimes\Lambda(\kappa_0).
  \end{equation*}
  Here $\kappa_0$ is the Koszul generator of weight $2$ and cochain degree
  $1$.
  The second identification again uses the symplectic self-duality
  $H^\vee\cong H$.
  The module relation $t\cdot1=2x_ax_b$ dualizes to
  \begin{equation*}
    d(O_aO_b)=2\kappa_0.
  \end{equation*}
  Since $p$ is odd, we may set $O_2=2\kappa_0$; the differential then becomes
  $d(O_aO_b)=O_2$, as in~\eqref{eq:punctured-ce-differential}.

  The strict weight bound is visible in the decomposition
  \begin{equation*}
    \Gamma(y)
    \cong
    \bigotimes_{i\geq0}\F_p[y_i]/(y_i^p),
    \qquad y_i=y^{[p^i]}.
  \end{equation*}
  The Koszul generator $\kappa_0$ for $y_0=y$ is the generator identified
  with $O_2$ above.  At weight $2p$, the relation $y_0^p=0$ contributes the
  degree-two cobar class $\beta_0$, while the new algebra generator
  $y_1=y^{[p]}$ contributes the degree-one cobar class $\alpha_1$.
  These are precisely the first features of the divided-power cobar complex
  absent from the polynomial Koszul model, which is why the comparison is
  restricted to $k<2p$.

  Tensoring the three comparisons gives an $\SL_2(\F_p)$-equivariant
  quasi-isomorphism with
  \begin{equation*}
    \Gamma(E_0,F_a,F_b)\otimes\Lambda(O_a,O_b,O_2)
  \end{equation*}
  and its differential is precisely the Chevalley--Eilenberg differential
  of~\eqref{eq:punctured-ce-differential}.
  Combining this quasi-isomorphism
  with~\eqref{eq:homology-associated-graded} and the duality regrading gives~\eqref{eq:low-weight-bs-ce-comparison}.
\end{proof}

\subsection{The associated-graded source calculation}
\label{sec:associated-graded-source}

We now compute the weight-$(2p-1)$, degree-$(2p-3)$ part of
$H_*(\mathcal E)$ and identify its $\SL_2(\F_p)$-module structure.

\begin{lemma}
  \label{lem:punctured-exterior-homology}
  The exterior part of the complex~\eqref{eq:punctured-ce-algebra} has homology
  \begin{equation}
    H\bigl(\Lambda(O_a,O_b,O_2),d\bigr)
    =\operatorname{span}
    \{1,O_a,O_b,O_aO_2,O_bO_2,O_aO_bO_2\}.
    \label{eq:punctured-exterior-homology}
  \end{equation}
\end{lemma}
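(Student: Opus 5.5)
The plan is to compute the homology of the eight-dimensional exterior complex $\Lambda(O_a,O_b,O_2)$ directly from the differential~\eqref{eq:punctured-ce-differential}. There $d$ vanishes on each cogenerator and $d(O_aO_b)=O_2$. Since $\mathcal E$ is a Chevalley--Eilenberg complex, $d$ is a coderivation rather than a derivation. Its values on monomials are therefore determined by the single bracket $[O_a,O_b]=O_2$: $d$ of a monomial is the signed sum over pairs of factors, replacing each pair by its bracket. The only non-zero bracket is the one displayed, so only a monomial containing both $O_a$ and $O_b$ can have non-zero differential.

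I will list the basis monomials by exterior length and read off the differential:
\begin{itemize}
  \item In lengths $0$ and $1$, the elements $1,O_a,O_b,O_2$ are cycles.
  \item In length $2$, one has $d(O_aO_b)=O_2$. Both $O_aO_2$ and $O_bO_2$ are cycles, because $[O_a,O_2]=[O_b,O_2]=0$ and $O_2$ pairs non-trivially with nothing.
  \item In length $3$, the monomial $O_aO_bO_2$ is a cycle. Its differential would be $\pm[O_a,O_b]O_2=\pm O_2O_2$, which vanishes because $O_2$ is odd, so $O_2^2=0$ in the exterior factor.
\end{itemize}

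The complex thus splits as a direct sum of the acyclic two-term piece $O_aO_b\mapsto O_2$ and six one-dimensional pieces with zero differential. These six pieces are spanned by $1,O_a,O_b,O_aO_2,O_bO_2,O_aO_bO_2$, which gives~\eqref{eq:punctured-exterior-homology}.

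The only point needing care is the sign and vanishing in length three, where the coderivation formula must not be confused with a Leibniz rule. Under a derivation convention one would get $d(O_aO_bO_2)=O_2\cdot O_2$, which is still zero, so the conclusion holds under either convention. I will also record that the weights and degrees from Table~\ref{tab:punctured-ce-generators} are additive. Finally, as an $\SL_2(\F_p)$-module the homology is $\mathbf 1\oplus H\oplus H\oplus\mathbf 1$, because $O_2$ and $O_aO_b$ span trivial lines; this description is what the later source calculation needs.
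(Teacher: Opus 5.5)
Your proof is correct and follows the paper's own argument: enumerate the eight basis monomials, observe that $d(O_aO_b)=O_2$ is the only non-zero differential (with $d(O_aO_bO_2)=\pm O_2^2=0$), and conclude that the six remaining monomials are cycles not lying in the one-dimensional image $\F_p\{O_2\}$. Your aside about a ``derivation convention'' is slightly off, since a derivation vanishing on the cogenerators would also kill $O_aO_b$, but it plays no role in the argument.
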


\begin{proof}
  The exterior algebra has basis
  \begin{equation*}
    1,\ O_a,\ O_b,\ O_2,\ O_aO_b,\ O_aO_2,\ O_bO_2,\ O_aO_bO_2.
  \end{equation*}
  By~\eqref{eq:punctured-ce-differential}, its only non-zero differential is
  $d(O_aO_b)=O_2$.
  Hence $O_aO_b$ is not a cycle and $O_2$ is a boundary.
  The other six basis elements are cycles; in particular,
  \begin{equation*}
    d(O_aO_bO_2)=O_2^2=0,
  \end{equation*}
  because $O_2$ is exterior.
  The image of the differential is the line
  spanned by $O_2$, so none of these six cycles is a boundary.
\end{proof}

\begin{lemma}
  \label{lem:source-bidegree-basis}
  In weight $2p-1$ and degree $2p-3$, the homology of $\mathcal E$ has a basis
  consisting of the following two families:
  \begin{align}
    E_0^{[2]}O_\varepsilon
    F_a^{[i]}F_b^{[p-2-i]},
    &\qquad
    O_\varepsilon\in\{O_a,O_b\},
    \quad 0\leq i\leq p-2,
    \label{eq:first-source-family}\\
    E_0O_aO_bO_2
    F_a^{[i]}F_b^{[p-3-i]},
    &\qquad
    0\leq i\leq p-3.
    \label{eq:second-source-family}
  \end{align}
  Their dimensions are respectively $2(p-1)$ and $p-2$.
\end{lemma}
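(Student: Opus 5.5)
The plan is to use the tensor decomposition $\mathcal E=\Gamma(E_0)\otimes\Gamma(F_a,F_b)\otimes\Lambda(O_a,O_b,O_2)$. By \eqref{eq:punctured-ce-differential}, the differential vanishes on every cogenerator and is nonzero only on the exterior factor. The divided-power factors $\Gamma(E_0,F_a,F_b)$ therefore carry zero differential. Since the differential is a coderivation, $d$ acts only on the exterior part. The Künneth theorem over $\F_p$ then gives
\begin{equation*}
  H_*(\mathcal E)\cong\Gamma(E_0,F_a,F_b)\otimes H\bigl(\Lambda(O_a,O_b,O_2),d\bigr),
\end{equation*}
and Lemma~\ref{lem:punctured-exterior-homology} supplies the six exterior classes $1,O_a,O_b,O_aO_2,O_bO_2,O_aO_bO_2$.

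Next I would read off the weights and degrees from Table~\ref{tab:punctured-ce-generators}. A monomial $E_0^{[e]}F_a^{[i]}F_b^{[r-i]}\cdot\omega$ has weight $e+2r+w(\omega)$ and degree $2r+\deg(\omega)$. The six exterior classes have (weight, degree) equal to $(0,0)$, $(1,1)$ twice, $(3,2)$ twice, and $(4,3)$. Setting the weight to $2p-1$ and the degree to $2p-3$ gives $e=\mathrm{weight}-\mathrm{degree}-(w(\omega)-\deg(\omega))$, so $e=2-(w-\deg)$. I would then run through the cases.

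\begin{itemize}
  \item For $\omega=1$, we get $e=2$ and $2r=2p-3$. This is impossible by parity.
  \item For $\omega=O_\varepsilon$, we get $e=2$ and $2r=2p-4$, so $r=p-2$. This is the first family, with $2(p-1)$ elements.
  \item For $\omega=O_\varepsilon O_2$, we get $e=1$ and $2r=2p-5$. This is impossible by parity.
  \item For $\omega=O_aO_bO_2$, we get $e=1$ and $2r=2p-6$, so $r=p-3$. This is the second family, with $p-2$ elements.
\end{itemize}

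For the dimensions, the monomials $F_a^{[i]}F_b^{[r-i]}$ form a basis of $\Gamma^r H$, which has dimension $r+1$. This gives $2(p-1)$ and $p-2$ respectively. For $p=3$ the second family is $E_0O_aO_bO_2$ alone, which is consistent with $p-2=1$.

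I do not expect a real obstacle. The only step needing care is justifying that $H_*$ of the tensor product splits: the differential on $\mathcal E$ must act as $\mathrm{id}\otimes d$. This holds because the only bracket is $[O_a,O_b]=O_2$, so the Chevalley--Eilenberg differential removes the pair $O_aO_b$ and leaves the divided-power factors untouched.
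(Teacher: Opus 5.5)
Your proposal is correct and follows essentially the same route as the paper: reduce to $\Gamma(E_0,F_a,F_b)\otimes H(\Lambda(O_a,O_b,O_2),d)$ via Lemma~\ref{lem:punctured-exterior-homology}, and require weight minus degree to equal $2$. The paper phrases this as a ``defect'' count to which only $E_0$ and $O_2$ contribute, and then eliminates the exterior classes $1$ and $O_\varepsilon O_2$ by the parity of the remaining $F$-weight, exactly as in your case list. Your explicit check that the differential acts as $\mathrm{id}\otimes d$, because the only bracket is $[O_a,O_b]=O_2$, spells out a step the paper leaves implicit.
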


\begin{proof}
  For a homogeneous generator $z$, set the \emph{defect}:
  \begin{equation*}
    \operatorname{def}(z)=\operatorname{weight}(z)-\deg(z).
  \end{equation*}
  Table~\ref{tab:punctured-ce-generators} gives
  \begin{equation*}
    \operatorname{def}(E_0)=\operatorname{def}(O_2)=1,
    \qquad
    \operatorname{def}(O_a)=\operatorname{def}(O_b)=\operatorname{def}(F_a)=\operatorname{def}(F_b)=0.
  \end{equation*}
  Since the required weight and degree differ by $2$, a contributing monomial
  must have total defect $2$.
  By
  Lemma~\ref{lem:punctured-exterior-homology}, this means that the exponent of
  $E_0$ plus the exponent of $O_2$ is $2$.
  As $O_2$ is exterior, the only
  possibilities for these two exponents are $(2,0)$ and $(1,1)$.

  Suppose first that the exterior class contains no $O_2$.
  The possible
  exterior classes are $1,O_a,O_b$, and the defect condition forces the factor
  $E_0^{[2]}$.
  For the class $1$, the weight remaining for the $F$-variables is
  $2p-3$, which is odd and therefore impossible because both $F_a$ and $F_b$
  have weight $2$.
  For $O_a$ or $O_b$, the remaining weight is
  \begin{equation*}
    (2p-1)-2-1=2(p-2),
  \end{equation*}
  which gives exactly the monomials in~\eqref{eq:first-source-family}.

  Now suppose that the exterior class contains $O_2$.
  The possible homology
  classes are
  \begin{equation*}
    O_aO_2,\qquad O_bO_2,\qquad O_aO_bO_2.
  \end{equation*}
  The defect condition forces one factor of $E_0$.
  For $O_aO_2$ and $O_bO_2$,
  the remaining weight is $2p-5$, again odd.
  For $O_aO_bO_2$, it is
  \begin{equation*}
    (2p-1)-1-1-1-2=2(p-3),
  \end{equation*}
  giving exactly~\eqref{eq:second-source-family}.

  The first family has two choices of $O_\varepsilon$ and $p-1$
  choices of $i$, whereas the second has $p-2$ choices of $i$.
\end{proof}

\begin{proposition}
  \label{prop:source-calculation}
  In weight $2p-1$ and degree $2p-3$,
  \begin{equation}
    \gr_F M
    \cong
    \bigl(H\otimes\Gamma^{p-2}H\bigr)\oplus\Gamma^{p-3}H,
    \label{eq:source-associated-graded}
  \end{equation}
  as an $\SL_2(\F_p)$-module.
  Hence the $\widetilde G$-composition factors of $M$ are the inflations of the composition factors in~\eqref{eq:source-associated-graded}.
\end{proposition}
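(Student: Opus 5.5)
The plan is to combine the comparison Lemma~\ref{lem:bs-ce-comparison} with the explicit basis of Lemma~\ref{lem:source-bidegree-basis}, and then read off the $\SL_2(\F_p)$-action on each family.

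First, since $2p-1<2p$, Lemma~\ref{lem:bs-ce-comparison} applies in weight $k=2p-1$ and degree $d=2p-3$. It gives an $\SL_2(\F_p)$-equivariant isomorphism
\[
\gr_F M \cong H_{2p-3}\bigl(\mathcal E^{(2p-1)}\bigr).
\]
So it suffices to identify the right-hand side as a representation. Lemma~\ref{lem:source-bidegree-basis} provides a basis. This space is the homology of a complex on which $\SL_2(\F_p)$ acts, and in this bidegree it splits equivariantly by monomial type. Indeed, the action is linear on the pairs $(O_a,O_b)$ and $(F_a,F_b)$ and fixes $E_0$ and $O_2$, so it preserves the exponents of $E_0$ and $O_2$.

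Concretely, I would split $\mathcal E$ as
\[
\Gamma(E_0)\otimes\Gamma(F_a,F_b)\otimes\Lambda(O_a,O_b,O_2).
\]
The differential involves only the exterior factor, so the Künneth theorem gives an equivariant isomorphism
\[
H(\mathcal E)\cong\Gamma(E_0)\otimes\Gamma(F_a,F_b)\otimes H(\Lambda(O_a,O_b,O_2),d).
\]
By Lemma~\ref{lem:punctured-exterior-homology}, the exterior homology decomposes into $\SL_2(\F_p)$-submodules:
\begin{itemize}
\item $\F_p\{1\}$ is trivial;
\item $\F_p\{O_a,O_b\}$ is isomorphic to $H$;
\item $\F_p\{O_aO_2,O_bO_2\}$ is isomorphic to $H\otimes\mathbf 1$;
\item $\F_p\{O_aO_bO_2\}$ is isomorphic to $\det H\otimes\mathbf 1$, which is trivial.
\end{itemize}
The only care needed is that $O_2$ and $E_0$, which come from $c$, are invariant. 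This holds because $c$ spans $\det H$.

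The bidegree count of Lemma~\ref{lem:source-bidegree-basis} then picks out two summands:
\begin{itemize}
\item $E_0^{[2]}\otimes\F_p\{O_a,O_b\}\otimes\Gamma^{p-2}\{F_a,F_b\}\cong H\otimes\Gamma^{p-2}H$;
\item $E_0\otimes O_aO_bO_2\otimes\Gamma^{p-3}\{F_a,F_b\}\cong\Gamma^{p-3}H$.
\end{itemize}
Here I use that the span of the $F_a^{[i]}F_b^{[r-i]}$ is $\Gamma^rH$, via the identification $H^\vee\cong H$ already built into the comparison lemma. This yields~\eqref{eq:source-associated-graded}.

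The final sentence of the proposition follows directly from Corollary~\ref{cor:filtered-composition-factors}. The main point to verify is that the identification of the $(O_a,O_b)$ and $(F_a,F_b)$ spans with $H$ is compatible with the actual action transported through the comparison quasi-isomorphism, including the dualization of the cobar side. Only the isomorphism class matters, and $H^\vee\cong H$ and $(\Gamma^rH)^\vee\cong\Gamma^rH$ hold for $r\le p-1$, so possible dual twists do not affect the result. Apart from this, the proof is bookkeeping.
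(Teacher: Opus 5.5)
Your proposal is correct and follows the paper's proof. You apply Lemma~\ref{lem:bs-ce-comparison} at $k=2p-1<2p$, then read off the $\SL_2(\F_p)$-types of the two families in Lemma~\ref{lem:source-bidegree-basis}: $E_0$, $O_2$ and $O_aO_b$ are invariant, $\operatorname{span}\{O_a,O_b\}\cong H$, and the $F$-monomials give $\Gamma^rH$. You conclude with Corollary~\ref{cor:filtered-composition-factors}, exactly as the paper does; your Künneth splitting only makes explicit why the two families are $\SL_2(\F_p)$-stable summands, which the paper leaves implicit.
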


As a dimension check, in agreement with Bianchi--Stavrou's fake-basechange formula, we have:
\begin{equation*}
  \dim(\gr_F M)
  =2\dim\Gamma^{p-2}H+\dim\Gamma^{p-3}H
  =2(p-1)+(p-2)=3p-4.
\end{equation*}

\begin{proof}
  Since $2p-1<2p$, Lemma~\ref{lem:bs-ce-comparison} identifies $\gr_F M$ with
  the bidegree computed in
  Lemma~\ref{lem:source-bidegree-basis}.

  In the first family, $E_0^{[2]}$ is invariant,
  $\operatorname{span}\{O_a,O_b\}\cong H$, and the monomials
  \begin{equation*}
    F_a^{[i]}F_b^{[p-2-i]},
    \qquad 0\leq i\leq p-2,
  \end{equation*}
  form the standard basis of $\Gamma^{p-2}H$.
  Hence this family is
  $H\otimes\Gamma^{p-2}H$.

  In the second family, $E_0$ and $O_2$ are invariant, while $O_aO_b$ spans
  $\Lambda^2H=\det(H)$ and is therefore invariant under $\SL_2(\F_p)$.
  The remaining $F$-monomials form
  $\Gamma^{p-3}H$.
  Thus the second family is $\Gamma^{p-3}H$, proving~\eqref{eq:source-associated-graded}.

  The final assertion about $\widetilde G$-composition factors follows from
  Corollary~\ref{cor:filtered-composition-factors} and the definition of inflation in
  Section~\ref{sec:rep-conventions}.
\end{proof}

\section{Proof of Theorem~\ref{thmB}: torsion on the critical line}

We now prove Theorem~\ref{thmB} by showing that the punctured-torus torsion class $\tau_p$ survives to a non-zero class in $H_{2p-2}(B_{2p}(\Sigma_1);\Z)$ for $p \geq 5$.
Proposition~\ref{prop:survival} proves the assertion for $p\geq5$ using representation theory, and Proposition~\ref{prop:prime-three} proves the assertion for $p=3$ using Napolitano's calculation.

\subsection{No trivial source factors and survival for \texorpdfstring{$p\geq5$}{p >= 5}}
\label{sec:survival}

We combine the source calculation with the modular Clebsch--Gordan formula to exclude trivial composition factors.
Equivariance of the Gysin map then prevents its image from containing the fixed target class $\beta_0$.

\begin{lemma}
  \label{lem:no-trivial-source-factor}
  If $p\geq5$, then
  $\bigl(H\otimes\Gamma^{p-2}H\bigr)\oplus\Gamma^{p-3}H$
  has composition factors
  $V_{p-1}$, $V_{p-3}$, $V_{p-3}$,
  and in particular has no trivial composition factor.
\end{lemma}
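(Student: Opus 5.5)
The plan is to compute the Grothendieck-group class of the module directly from the modular Clebsch--Gordan formula~\eqref{eq:clebsch-gordan}, and then read off the composition factors using the simplicity facts recorded in Section~\ref{sec:rep-conventions}. Since composition factors are additive in direct sums and in short exact sequences, it suffices to treat the two summands $H\otimes\Gamma^{p-2}H$ and $\Gamma^{p-3}H$ separately.

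\emph{The first summand.} I will apply~\eqref{eq:clebsch-gordan} with $r=p-2$. This is legitimate because $1\le p-2\le p-2$ holds as soon as $p\geq3$, so in particular for $p\geq5$. It gives
\begin{equation*}
  [H\otimes\Gamma^{p-2}H]=[\Gamma^{p-1}H]+[\Gamma^{p-3}H]=[V_{p-1}]+[V_{p-3}].
\end{equation*}

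\emph{The second summand.} Since $0\le p-3\le p-1$, the module $\Gamma^{p-3}H=V_{p-3}$ is itself simple, by the identification $V_r\cong\operatorname{Sym}^rH$ as the simple module of highest weight $r$ for $0\le r\le p-1$. Adding the two contributions gives the multiset $\{V_{p-1},V_{p-3},V_{p-3}\}$.

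\emph{Excluding the trivial factor.} The simple modules $V_r$ for $0\le r\le p-1$ are pairwise non-isomorphic, since already their dimensions $r+1$ differ. The trivial module is $V_0$. For $p\geq5$ we have $p-3\geq2>0$ and $p-1>0$, so none of the factors is $V_0$. This is exactly where $p\geq5$ enters: for $p=3$ the factor $V_{p-3}=V_0$ would be trivial.

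As a sanity check, the dimensions should add up to $p+2(p-2)=3p-4$, which matches the dimension count after Proposition~\ref{prop:source-calculation}. There is no real obstacle in this argument. The only points to verify carefully are that the index range of~\eqref{eq:clebsch-gordan} covers $r=p-2$, and that the Grothendieck-group identity determines the composition factors as a multiset; the latter holds because classes of simples form a basis of the Grothendieck group.
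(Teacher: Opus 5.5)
Your proposal is correct and takes the same route as the paper. Both apply the modular Clebsch--Gordan formula~\eqref{eq:clebsch-gordan} with $r=p-2$, add the simple summand $\Gamma^{p-3}H=V_{p-3}$, and note that $p-1$ and $p-3$ are positive for $p\geq5$, so $V_0$ does not occur. Your extra checks are accurate and only spell out what the paper leaves implicit: the range condition for $r=p-2$, simples forming a basis of the Grothendieck group, and the dimension count $3p-4$.
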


\begin{proof}
  Applying~\eqref{eq:clebsch-gordan} with $r=p-2$ gives
  \[
    [H\otimes\Gamma^{p-2}H]
    =[V_{p-1}]+[V_{p-3}]
  \]
  in the Grothendieck group.
  Hence
  \[
    \bigl[
      (H\otimes\Gamma^{p-2}H)\oplus\Gamma^{p-3}H
    \bigr]
    =[V_{p-1}]+2[V_{p-3}].
  \]
  Since $p\geq5$, both $p-1$ and $p-3$ are positive; thus neither simple module appearing here is $V_0$, the trivial module.
\end{proof}

\begin{proposition}
  \label{prop:survival}
  If $p\geq5$, then $\beta_0\notin\operatorname{im}\bar\partial$.
  Consequently $j_*(\tau_p)$ is a non-zero order-$p$ class in
  $H_{2p-2}(B_{2p}(\Sigma_1);\Z_{(p)})$, and therefore
  $H_{2p-2}(B_{2p}(\Sigma_1);\Z)$ has $p$-torsion.
\end{proposition}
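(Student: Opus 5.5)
The argument is short, because all ingredients are already in place; the proof is a matter of assembling them.

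\emph{Step 1: composition factors of the source.} By Proposition~\ref{prop:source-calculation}, the $\widetilde G$-composition factors of $M$ are the inflations along $\widetilde G\twoheadrightarrow\SL_2(\F_p)$ of the composition factors of $(H\otimes\Gamma^{p-2}H)\oplus\Gamma^{p-3}H$. By Lemma~\ref{lem:no-trivial-source-factor}, for $p\geq5$ these are $V_{p-1},V_{p-3},V_{p-3}$. Each is a simple, non-trivial $\SL_2(\F_p)$-module. Inflation along a surjection preserves both simplicity and non-triviality, so the inflated modules are simple, non-trivial $\widetilde G$-modules. Hence $M$ has no trivial $\widetilde G$-composition factor.

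\emph{Step 2: equivariant separation.} By Section~\ref{sec:equivariance}, $\bar\partial:M\to N$ is $\widetilde G$-linear between finite-dimensional $\F_p[\widetilde G]$-modules. By Lemma~\ref{lem:target-fixed}, $\beta_0$ is a non-zero $\widetilde G$-fixed vector of $N$. Apply the elementary fact recorded in Section~\ref{sec:rep-conventions}. The image $\operatorname{im}\bar\partial$ is a quotient of $M$, so it has no trivial composition factor. If $\beta_0$ lay in this image, then $\F_p\{\beta_0\}$ would be a trivial submodule of it, which is a contradiction. Therefore $\beta_0\notin\operatorname{im}\bar\partial$.

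\emph{Step 3: integral consequences.} Lemma~\ref{lem:mod-p-reduction} now gives $j_*(\tau_p)\neq0$ in $H_{2p-2}(B_{2p}(\Sigma_1);\Z_{(p)})$. Since $p\tau_p=0$, we have $p\,j_*(\tau_p)=0$, so $j_*(\tau_p)$ is a non-zero class of order exactly $p$. Next, $H_{2p-2}(B_{2p}(\Sigma_1);\Z)$ is finitely generated, and by~\eqref{eq:localization-torsion} its $p$-primary torsion maps isomorphically onto the torsion of the localized group. So $H_{2p-2}(B_{2p}(\Sigma_1);\Z)$ has non-zero $p$-torsion. By the discussion after~\eqref{eq:beta0}, the conclusion is independent of the chosen generator.

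\emph{Main obstacle.} The only delicate point is Step~1. One must make sure the Jordan--Hölder argument is carried out for $\widetilde G$, not merely for $\SL_2(\F_p)$. This is exactly what Corollary~\ref{cor:filtered-composition-factors} provides: the filtration is $\widetilde G$-stable, and $A$ acts trivially on its graded pieces. Beyond that, the only thing to check is that the hypothesis $p\geq5$ makes $p-3>0$, so that $V_{p-3}\neq V_0$.
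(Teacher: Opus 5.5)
Your proposal is correct and takes essentially the same route as the paper. Both arguments proceed in three steps: $M$ has no trivial $\widetilde G$-composition factor by Proposition~\ref{prop:source-calculation} and Lemma~\ref{lem:no-trivial-source-factor}; the fixed vector $\beta_0$ of Lemma~\ref{lem:target-fixed} is then separated from the image of the $\widetilde G$-linear map $\bar\partial$; and Lemma~\ref{lem:mod-p-reduction} together with~\eqref{eq:localization-torsion} gives the integral conclusion, with your remarks on inflation and on the exact order of $j_*(\tau_p)$ only making explicit what the paper leaves implicit.
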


\begin{proof}
  Proposition~\ref{prop:source-calculation} and Lemma~\ref{lem:no-trivial-source-factor} show that $M$ has no trivial $\widetilde G$-composition factor.
  Since $\bar\partial:M\to N$ is $\widetilde G$-linear, the same is true of its image, which is a quotient of $M$.
  On the other hand, Lemma~\ref{lem:target-fixed} says that $\beta_0$ is a non-zero fixed vector of $N$.
  Therefore $\beta_0\notin\operatorname{im}\bar\partial$.

  Lemma~\ref{lem:mod-p-reduction} now gives $j_*(\tau_p)\neq0$.
  Since $\tau_p$ has order $p$, its non-zero image also has order $p$, so
  $\Tors H_{2p-2}(B_{2p}(\Sigma_1);\Z_{(p)})\neq0$.
  By~\eqref{eq:localization-torsion}, $H_{2p-2}(B_{2p}(\Sigma_1);\Z)$ has
  $p$-torsion.
\end{proof}

\subsection{The exceptional prime \texorpdfstring{$p=3$}{p = 3}}
\label{sec:prime-three}

The source--target separation used for $p\geq5$ fails at $p=3$.
We therefore use Napolitano's integral computation (Proposition~\ref{prop:napolitano}) to determine whether the punctured torsion class survives filling.

At $p=3$, the associated-graded source has composition factors
\begin{equation*}
  [H\otimes\Gamma^1H\oplus\Gamma^0H]
  =
  [V_2]+2 [V_0],
\end{equation*}
so the source contains two trivial composition factors.
Although $\beta_0$ remains a fixed vector by Lemma~\ref{lem:target-fixed}, equivariance no longer prevents it from lying in $\operatorname{im}\bar\partial$: this image is a quotient of the source and may inherit one of its trivial composition factors.
Using a result of Napolitano~\cite{napolitanoCohomologyConfigurationSpaces2003}, we now show that the torsion class $\tau_3$ is indeed in the image of the connecting homomorphism $\partial$.

\begin{proposition}
  \label{prop:prime-three}
  The group $H_4(B_6(\Sigma_1);\Z)$ has no $3$-torsion, hence $j_*(\tau_3)=0$ and, equivalently, $\tau_3\in\operatorname{im}\partial$.
\end{proposition}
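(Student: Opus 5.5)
The plan has three steps. First, Proposition~\ref{prop:napolitano} gives that $H_4(B_6(\Sigma_1);\Z)$ has no $3$-torsion. Second, exactness of the Gysin sequence~\eqref{eq:gysin} at $p=3$ turns this into the vanishing $j_*(\tau_3)=0$. Third, the equivalence~\eqref{eq:gysin-survival} yields $\tau_3\in\operatorname{im}\partial$.

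For the first step, we apply the universal coefficient theorem for cohomology. All integral homology groups of $B_6(\Sigma_1)$ are finitely generated, by the finite cellular model of Section~\ref{sec:napolitano}. Hence
\begin{equation*}
  H^5(B_6(\Sigma_1);\Z)\cong\Hom\bigl(H_5(B_6(\Sigma_1);\Z),\Z\bigr)\oplus\Ext\bigl(H_4(B_6(\Sigma_1);\Z),\Z\bigr).
\end{equation*}
Here the first summand is free and the second is isomorphic to the torsion subgroup of $H_4$. Napolitano's value $\Z^9\oplus(\Z/2)^8$ in~\eqref{eq:napolitano-b6} therefore forces $\Tors H_4(B_6(\Sigma_1);\Z)\cong(\Z/2)^8$. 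In particular $\Tors_3 H_4(B_6(\Sigma_1);\Z)=0$, and by~\eqref{eq:localization-torsion} the group $H_4(B_6(\Sigma_1);\Z_{(3)})$ is torsion-free.

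For the second step, $\tau_3$ has order $3$ in $H_4(B_6(\Sigma_{1,1}^\circ);\Z_{(3)})$. Hence $3\,j_*(\tau_3)=j_*(3\tau_3)=0$, so $j_*(\tau_3)$ is a torsion element of the torsion-free $\Z_{(3)}$-module $H_4(B_6(\Sigma_1);\Z_{(3)})$. This forces $j_*(\tau_3)=0$.

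For the third step, exactness of~\eqref{eq:gysin} at $p=3$ gives $\tau_3\in\ker j_*=\operatorname{im}\partial$, which is~\eqref{eq:gysin-survival} read in the negative direction. By the remark following~\eqref{eq:tau-p}, the conclusion does not depend on the choice of generator.

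The argument has essentially no obstacle. The only point needing care is the bookkeeping in the universal coefficient identification: cohomological degree $5$ detects the torsion of homological degree $4$ through the $\Ext$ term. The $3$-torsion of $H_4$ must therefore be read off $H^5$, not $H^4$, and Napolitano's table shows that this group carries only $2$-primary torsion.
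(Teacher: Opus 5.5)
Your proposal is correct and follows the paper's proof essentially step for step. Universal coefficients identify the torsion of $H^5(B_6(\Sigma_1);\mathbb{Z})$ with that of $H_4$, so Napolitano's value rules out $3$-torsion. Hence $j_*(\tau_3)$, which has order dividing $3$, vanishes, and exactness of the Gysin sequence places $\tau_3$ in $\operatorname{im}\partial$.
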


\begin{proof}
  For $B_6(\Sigma_1)$, Napolitano's calculation (Proposition~\ref{prop:napolitano}) gives
  $H^5(B_6(\Sigma_1);\Z)\cong \Z^9\oplus(\Z/2)^8$.
  Hence $H^5(B_6(\Sigma_1);\Z)$ has no $3$-torsion. By the universal coefficient theorem, there is a short exact sequence:
  \begin{equation*}
    0\to
    \Ext^1_{\Z}(H_4(B_6(\Sigma_1);\Z),\Z)
    \to
    H^5(B_6(\Sigma_1);\Z)
    \to
    \Hom_{\Z}(H_5(B_6(\Sigma_1);\Z),\Z)
    \to0.
  \end{equation*}
  The right-hand term is free, and finite generation together with the structure
  theorem identifies the left-hand term with the torsion subgroup of
  $H_4(B_6(\Sigma_1);\Z)$.
  Therefore $\Tors_3H_4(B_6(\Sigma_1);\Z)=0$.

  On the other hand, Theorem~\ref{thm:punctured-threshold} gives $\Tors_3 H_4(B_6(\Sigma_{1,1}^\circ);\Z)=\Z/3\{\tau_3\}$.
  The image $j_*(\tau_3)\in H_4(B_6(\Sigma_1);\Z)$ has order dividing $3$.
  Since $H_4(B_6(\Sigma_1);\Z)$ has no $3$-torsion, this image must be zero.
  By exactness in~\eqref{eq:gysin-survival}, this is equivalent to
  $\tau_3\in\operatorname{im}\partial$.
\end{proof}

\section{Proof of Theorem~\ref{thmC}: no torsion in high degree}
\label{sec:high-degree}

We now prove Theorem~\ref{thmC} by analyzing the Borel--Moore chain complex of Napolitano's cellular decomposition~\cite{napolitanoCohomologyConfigurationSpaces2003}.
After translating ordinary high-degree torsion into bottom-degree Borel--Moore torsion, we identify the relevant punctured cells and compute the closed--punctured connecting map.

Note that Napolitano computes Borel--Moore homology, not ordinary homology.
The unordered configuration space $B_k(\Sigma_1)$ is an orientable real
$2k$-manifold. Therefore, Borel--Moore Poincaré duality gives
\begin{equation*}
  \HBM_i(B_k(\Sigma_1);\Z_{(p)}) \cong H^{2k-i}(B_k(\Sigma_1);\Z_{(p)}).
\end{equation*}
The universal coefficient theorem over $\Z_{(p)}$ gives
\begin{equation*}
  0\to
  \Ext^1_{\Z_{(p)}}(H_{q-1}(B_k(\Sigma_1);\Z_{(p)}),\Z_{(p)})
  \to
  H^q(B_k(\Sigma_1);\Z_{(p)})
  \to
  \Hom_{\Z_{(p)}}(H_q(B_k(\Sigma_1);\Z_{(p)}),\Z_{(p)})
  \to0.
\end{equation*}
By finite generation, the structure theorem over $\Z_{(p)}$ identifies the
$\Ext$-term with the $p$-primary torsion in
$H_{q-1}(B_k(\Sigma_1);\Z)$, while the $\Hom$-term is a finite free
$\Z_{(p)}$-module.  The short exact sequence therefore splits as a sequence of
$\Z_{(p)}$-modules.
Consequently, we have the following.

\begin{lemma}
  \label{lem:bm-uct}
  For every $n$ and $r$, the $p$-primary torsion of $H_r(B_n(\Sigma_1);\Z)$ is recorded in the $p$-primary torsion of $\HBM_{2n-r-1}(B_n(\Sigma_1);\Z_{(p)})$.
  In particular, torsion-freeness of the latter group implies that $H_r(B_n(\Sigma_1);\Z)$ has no $p$-torsion.
\end{lemma}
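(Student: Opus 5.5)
Set $q=r+1$ and chain the two dualities stated just before the lemma. First, Borel--Moore Poincaré duality for the orientable $2n$-manifold $B_n(\Sigma_1)$ gives
\[
  \HBM_{2n-r-1}(B_n(\Sigma_1);\Z_{(p)})\cong H^{r+1}(B_n(\Sigma_1);\Z_{(p)}).
\]
Second, the universal coefficient sequence over $\Z_{(p)}$ in cohomological degree $r+1$ identifies the Ext term with the $p$-primary torsion of $H_r(B_n(\Sigma_1);\Z)$. The Hom term is finite free. Hence the torsion submodule of $H^{r+1}(B_n(\Sigma_1);\Z_{(p)})$ is precisely $\Ext^1_{\Z_{(p)}}(H_r(B_n(\Sigma_1);\Z_{(p)}),\Z_{(p)})$.

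The key steps are as follows. I will use finite generation, from the finitely many cells of Napolitano's model, together with the structure theorem over the PID $\Z_{(p)}$. This lets me write
\[
  H_r(B_n(\Sigma_1);\Z_{(p)})\cong\Z_{(p)}^{s}\oplus\bigoplus_j\Z_{(p)}/(p^{a_j}).
\]
Then the Ext term is $\bigoplus_j\Z_{(p)}/(p^{a_j})$, which is non-canonically isomorphic to $\Tors H_r(B_n(\Sigma_1);\Z_{(p)})$. By \eqref{eq:localization-torsion}, the latter is $\Tors_pH_r(B_n(\Sigma_1);\Z)$. Because the Hom term is free, the sequence splits. Therefore the torsion submodule of $H^{r+1}(B_n(\Sigma_1);\Z_{(p)})$ is exactly the Ext term: a free quotient kills torsion, and a split extension of a free module by a torsion module has exactly that torsion module as its torsion. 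Transporting this along the Poincaré duality isomorphism gives
\[
  \Tors\HBM_{2n-r-1}(B_n(\Sigma_1);\Z_{(p)})\cong\Tors_pH_r(B_n(\Sigma_1);\Z).
\]
This is the meaning of ``recorded''.

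The ``in particular'' clause follows directly. If $\HBM_{2n-r-1}$ with $\Z_{(p)}$-coefficients is torsion-free, the Ext term vanishes. Hence $H_r(B_n(\Sigma_1);\Z_{(p)})$ is free, so $H_r(B_n(\Sigma_1);\Z)$ has no $p$-torsion.

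The only point needing care is the orientability of $B_n(\Sigma_1)$, which is required for untwisted Poincaré duality. The paper asserts it; it holds because the configuration space of an oriented surface is a complex manifold and so is oriented. I would also check that Poincaré duality holds with $\Z_{(p)}$-coefficients, which is immediate by flatness of $\Z_{(p)}$ or by applying duality directly with these coefficients. No step is a real obstacle.
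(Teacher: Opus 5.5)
Your proposal is correct and follows the paper's argument: Borel--Moore Poincar\'e duality gives $\HBM_{2n-r-1}(B_n(\Sigma_1);\Z_{(p)})\cong H^{r+1}(B_n(\Sigma_1);\Z_{(p)})$, and the universal coefficient sequence over $\Z_{(p)}$ splits because its $\Hom$-term is free, while finite generation and the structure theorem identify its $\Ext$-term with $\Tors_p H_r(B_n(\Sigma_1);\Z)$. Your added remarks on orientability (via the complex structure) and on using $\Z_{(p)}$-coefficients directly in duality are sound, and they supply points the paper leaves implicit.
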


\subsection{The cell model and the closed--punctured exact sequence}
\label{sec:cell-model}

We fix orientations and write the cellular differential in the form needed for the torsion calculation.
Separating cells according to whether the base north pole is occupied then yields a short exact sequence relating the closed and punctured complexes.

Recall from Section~\ref{sec:napolitano} that Napolitano's cells are indexed by tuples of the form:
\begin{equation}\label{eq:napolitano-tuple}
  (e,v;(m_1,v_1),\ldots,(m_d,v_d)),
\end{equation}
with weight and Borel--Moore degree:
\begin{equation}
  k = e+v+\sum_i(m_i+v_i),
  \qquad
  n = e+\sum_i(m_i+1).
  \label{eq:napolitano-indexing}
\end{equation}
If we view $\Sigma_1 = S^1 \cup (S^1 \times \R)$, $v$ counts whether the north pole of $S^1$ is occupied, $e$ counts the number of points on the base circle (but not at the north pole), and $(m_i,v_i)$ counts the number of points in the $i$th fiber of the cylinder and whether that fiber contains the north pole.
We will say that the $i$th fiber is \emph{pinned} if $v_i=1$.
A point not at the north pole is called \emph{off-pole}.

We fix the cell orientation used below and record all five signed boundary
formulas.  The general signs will not be rederived here; the exact signs needed
in the proof are derived after restriction to pure cells in
Lemma~\ref{lem:connecting-pure}.
Orient a cell by listing first the $e$ ordered arc coordinates and then, for each fiber from left to right, its position followed by its $m_i$ ordered off-pole coordinates.
Let us denote
\begin{equation*}
  P(a,b)=
  \begin{cases}
    0,&a,b\text{ both odd},\\
    \displaystyle\binom{\lfloor(a+b)/2\rfloor}{\lfloor a/2\rfloor},
    &\text{otherwise}.
  \end{cases}
\end{equation*}
Consider the cell indexed by~\eqref{eq:napolitano-tuple}.
It has five types of codimension-one boundary cells (if the corresponding condition is not satisfied, then the face does not exist):
\begin{enumerate}
  \item Two adjacent fibers $i$, $i+1$ can merge unless both are pinned.
    The indexing tuple of the boundary face replaces $(m_i,v_i),(m_{i+1},v_{i+1})$ with $(m_i+m_{i+1},\max(v_i,v_{i+1}))$, with coefficient $(-1)^{e+i-1+\sum_{j\le i}m_j}P(m_i,m_{i+1})$.
  \item If $v_i=0$ and $m_i\ge1$, an off-pole point can reach the north pole.
    The boundary face replaces $(m_i,0)$ by $(m_i-1,1)$, with coefficient $(-1)^{e+i-1+\sum_{j<i}m_j}\bigl(1+(-1)^{m_i}\bigr)$.
  \item If $v=0$ and $e\ge1$, a point on the base circle can reach the base north pole.
    The boundary face replaces $(e,0)$ by $(e-1,1)$, leaving the fibers unchanged, with coefficient $-\bigl(1+(-1)^e\bigr)$.
  \item If the base north pole and the leftmost north pole are not both occupied, the
    leftmost fiber can slide onto the base circle.
    Its $m_1$ off-pole points join the base circle, its potential pin is transferred to the base north pole, and the coefficient is $-(-1)^eP(e,m_1)$.
  \item The analogous rightmost slide has coefficient $(-1)^{e+S(1+m_d)}P(e,m_d)$, where $S=(d-1)+\sum_{j<d}m_j$.
\end{enumerate}
All other codimension-one degenerations either collide points or escape an
interior fiber and therefore land at the compactification basepoint.  The
incidence types and magnitudes come from Napolitano's
construction~\cite[Sections~2.3--2.4]{napolitanoCohomologyConfigurationSpaces2003};
the displayed signs are those obtained from the orientation convention above.
For the pure cells used below, types (1)--(2) are absent, and the exact signs of
types (3)--(5) are derived directly in the proof of
Lemma~\ref{lem:connecting-pure}.

The full (``closed'') complex $C^{\mathrm{cl}}(k)$ is the Borel--Moore chain complex of the cellular decomposition of $B_k(\Sigma_1)$.
The punctured complex $C^{\mathrm{pu}}(k)$ is the quotient obtained by taking $v=0$ and discarding
summands that hit $v=1$.
Both are graded by the Borel--Moore degree $n$ of~\eqref{eq:napolitano-indexing}, and we write $H_i(-)$ for the homology of a chain complex, so that
\begin{equation*}
  H_i(C^{\mathrm{cl}}(k))\cong\HBM_i(B_k(\Sigma_1)),
  \qquad
  H_i(C^{\mathrm{pu}}(k))\cong\HBM_i\bigl(B_k(\Sigma_1\setminus\{\ast\})\bigr),
\end{equation*}
where $\ast$ denotes the base north pole.
For punctured cells, $n-k=d-\sum_i v_i\ge0$, so the punctured complex for $B_k(\Sigma_{1,1}^\circ)$ has no cells in Borel--Moore degree below $k$.
For closed cells, $n-k=d-\sum_i v_i-v\ge -1$, so the closed complex has no cells in Borel--Moore degree below $k-1$.

The subcomplex of closed cells with $v=1$ is naturally identified
with the punctured complex in weight $k-1$.
Thus there is a short exact sequence of Borel--Moore chain complexes
\begin{equation}
  0\longrightarrow C^{\mathrm{pu}}(k-1) \longrightarrow C^{\mathrm{cl}}(k)
  \longrightarrow C^{\mathrm{pu}}(k) \longrightarrow 0.
  \label{eq:closed-punctured-complexes}
\end{equation}

\subsection{Bottom-degree punctured cells}
\label{sec:all-pinned}

We describe the bottom Borel--Moore homology of the punctured complex in terms of all-pinned cells.
We show that pure all-pinned cells generate in weights at most $2p$ and form a basis in weight $2p-1$.

Recall that there are no cells in Borel--Moore degree $< k$ in the punctured complex.
The bottom-degree punctured cells are the \emph{all-pinned cells}:
\begin{equation}
  X(e;m_1,\dots,m_d)
  =(e,0;(m_1,1),\ldots,(m_d,1)),
  \qquad
  e+d+\sum_i m_i=k.
  \label{eq:x-cell}
\end{equation}
The differential vanishes on these cells: merging two pinned fibers is forbidden, and every other codimension-one face (sliding a pinned fiber onto the base circle, or moving a base point to the base north pole) occupies the base north pole and is therefore discarded in the punctured complex.

Let
\begin{equation*}
  W_k \coloneqq H_k(C^{\mathrm{pu}}(k)) \; (\cong H^k(B_k(\Sigma_{1,1}^\circ))).
\end{equation*}
In Borel--Moore degree $k+1$, the punctured complex consists of cells with exactly one unpinned fiber.
Thus $W_k = C_k^{\mathrm{pu}}(k) / d(C_{k+1}^{\mathrm{pu}}(k))$ has three types of relations that we now describe.
Suppose that
\begin{equation}
  Y(e; m_1, \dots, m_d; j) \coloneqq (e,0; (m_1,1), \dots, (m_{j-1},1), (m_j,0), (m_{j+1},1), \dots, (m_d,1))
  \label{eq:y-cell}
\end{equation}
is a \emph{one-unpinned} punctured cell.
Necessarily $m_j\geq1$, since the indexing condition $m_j+v_j\geq1$ and the
unpinned condition $v_j=0$ hold.
Then the boundary of $Y(e; m_1, \dots, m_d; j)$ has three possible types of terms (we will not need the precise signs here):
\begin{enumerate}
  \item merging two adjacent fibers, if possible, with coefficients $\pm P(m_{j-1}, m_j)$ or $\pm P(m_j, m_{j+1})$;
  \item sliding an off-pole point to the north pole, if $m_j\ge1$, with coefficient $\pm(1+(-1)^{m_j})$;
  \item sliding the unpinned fiber onto the base circle, if the unpinned fiber is the leftmost or rightmost fiber, with coefficient $\pm P(e, m_j)$.
\end{enumerate}

Let us write $\Pi(e,d) \coloneqq X(e;0^d)$ ($e+d=k$) for the ``pure'' all-pinned cell with $d$ fibers and $e$ points on the base circle.

\begin{lemma}
  \label{lem:pure-generate}
  If $k\leq2p$, then the classes $\Pi(e,d)$ with $e+d=k$ generate $W_k \otimes \Z_{(p)}$.
\end{lemma}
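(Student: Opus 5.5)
The plan is to work directly with the presentation $W_k=C^{\mathrm{pu}}_k(k)/d(C^{\mathrm{pu}}_{k+1}(k))$. I will show, by a double induction, that every all-pinned cell $X(e;m_1,\dots,m_d)$ with some $m_j\geq1$ is congruent, modulo boundaries of one-unpinned cells $Y$ and after tensoring with $\Z_{(p)}$, to a combination of all-pinned cells that are ``smaller''. The order I intend to use is lexicographic:
\begin{itemize}[label={}]
  \item first, the total off-pole mass $s=\sum_i m_i$ (smaller is better);
  \item second, the positional moment $\mu=\sum_i i\,m_i$ (smaller is better).
\end{itemize}
The minimal elements, with $s=0$, are exactly the pure cells $\Pi(e,d)$. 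A useful preliminary observation is that, in the punctured complex, the type (3) faces of a one-unpinned cell (sliding the unpinned fiber onto the base circle) transfer a pin to the base north pole. Like the type (3) faces of all-pinned cells, these are therefore discarded. Hence $d\,Y$ involves only adjacent merges (1) and the pole face (2). This keeps every relation inside the all-pinned cells with the same $e$.

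The key step is a splitting relation. Given $X=X(e;m)$ with $m_j=M\geq1$, I choose the first $j$ with $m_j\geq1$. I then consider the one-unpinned cell obtained by replacing the pinned fiber $(M,1)$ with two adjacent fibers: an unpinned fiber $(a,0)$, followed on its right by a pinned fiber $(M-a,1)$. The weight is unchanged, and the number of fibers is $d+1$. I take $a=2$ if $M\geq2$ and $a=1$ if $M=1$. The boundary of this cell then has the following terms.
\begin{enumerate}
  \item The right merge recovers $X$ with coefficient $\pm P(a,M-a)$. For $a=1$ this is $P(1,0)=1$. For $a=2$ it is $\binom{\lfloor M/2\rfloor}{1}=\lfloor M/2\rfloor$. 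Since $M\leq k-d\leq 2p-1$, we get $\lfloor M/2\rfloor\leq p-1$, so this coefficient is a unit in $\Z_{(p)}$. This is precisely where the hypothesis $k\leq2p$ enters.
  \item The pole face exists only for $a=2$, with coefficient $2$. It produces an all-pinned cell in which the block $(M,1)$ is replaced by $(1,1),(M-2,1)$. This cell has off-pole mass $s-1$.
  \item The left merge with $(m_{j-1},1)$, if $j>1$, gives $(m_{j-1}+a,1),(M-a,1)$. This has the same mass $s$ but a strictly smaller moment $\mu$, since $a$ units of mass move one fiber to the left.
\end{enumerate}
Solving the relation for $X$, using the unit coefficient in (1), expresses $X$ through cells that are strictly smaller in the chosen order. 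Induction on $(s,\mu)$ then shows that the pure cells $\Pi(e,d)$, $e+d=k$, generate $W_k\otimes\Z_{(p)}$.

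The main obstacle I expect is making sure that no other terms sneak in. Concretely, the identification of faces must be exhaustive. Merging two pinned fibers is forbidden, interior escapes go to the basepoint, and merges of the unpinned fiber only ever produce pinned fibers. I also need the order to be well-founded as $e$ and $d$ vary; the mass $s$ determines $e+d=k-s$, and $e$ is fixed throughout, so this is fine. The delicate points are the edge cases: when $j=1$ there is no left merge, and when $M-a=0$ the right pinned fiber is $(0,1)$, which is allowed. Signs are irrelevant because only the unit property of the pivot coefficient is used. Finally, I would double-check that the chosen $j$ (the first nonzero fiber) makes the left merge always decrease $\mu$ without increasing $s$. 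If this fails, the fallback is to order by $(s,-d,\mu)$ instead.
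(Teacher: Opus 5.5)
Your argument is correct and is essentially the paper's proof. The paper also filters by off-pole mass $s$ and splits the first non-zero block $m_j$ into an unpinned fiber $(b,0)$ followed by a pinned fiber $(c,1)$, with unit merge coefficient $P(b,c)$; it takes $b=1$ for odd $m_j$ and $b=2$ for even $m_j$, whereas your choice $a=2$ for all $M\geq2$ also gives a unit. It then inducts in $\gr_s W_k$ on the position of the first non-zero entry, which plays the role of your moment $\mu$.

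One claim is wrong, though harmless. The face that slides the \emph{unpinned} fiber onto the base circle is not discarded, because that fiber carries no pin to transfer to the base north pole. When $j=1$ it contributes $\pm P(e,a)\,X(e+a;M-a,m_2,\dots,m_d)$ to $dY$, so $e$ is not fixed. However, this term has mass $s-a<s$, so your lexicographic induction on $(s,\mu)$, which is well-founded on $\mathbb{N}^2$ whatever $e$ is, still closes.
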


\begin{proof}
  The strategy is to filter by the total off-pole mass and kill every positive
  associated-graded piece with a unit merge coefficient, inducting on the
  position of the first non-zero block.
  Let us set $s(X) = \sum_i m_i$, where $X$ is a Borel--Moore cell.
  Note that pure cells have $s(\Pi(e,d))=0$.
  This induces a filtration on $W_k$ and we let $\gr_s W_k$ be the associated graded.

  If $Y$ is a cell of type~\eqref{eq:y-cell}, then $dY$ is a linear combination of cells of type~\eqref{eq:x-cell}, each with $s$-value at most $s(Y)$: merging two fibers does not change $s$, sliding an off-pole point to the north pole decreases $s$ by $1$, and sliding the unpinned fiber onto the base circle decreases $s$ by $m_j > 0$.
  Therefore, in $\gr_s W_k$, the remaining relations are:
  \begin{equation}
    \pm P(m_{j-1}, m_j) X(e; m_1, \dots, m_{j-1}+m_j, \dots, m_d) \pm P(m_j, m_{j+1}) X(e; m_1, \dots, m_j+m_{j+1}, \dots, m_d) = 0.
    \label{eq:gr-s-relations}
  \end{equation}
  Note that if the unpinned fiber is at the beginning or the end, then there is only one term in~\eqref{eq:gr-s-relations}.

  Let us now prove that $\gr_s W_k \otimes \Z_{(p)} = 0$ for $s \geq 1$, from which it follows that every class in $W_k \otimes \Z_{(p)}$ is a linear combination of pure cells.
  We proceed by induction on the position of the first non-zero $m_i$ in the tuple $(m_1, \dots, m_d)$.

  Since $e+d+\sum_i m_i=k\leq2p$ and $d\geq1$, every $m_i$ is at most
  $2p-1$.  Thus, when $m_i$ is positive and even,
  $1\leq m_i/2\leq p-1$ and $m_i/2$ is a unit in $\Z_{(p)}$.

  First, consider an all-pinned cell $X(e; m_1, \dots, m_d)$ with $m_1 > 0$.
  We can find $b,c$ such that $b+c=m_1$ and $P(b,c)$ is a unit in $\Z_{(p)}$: if $m_1$ is odd, let $(b,c) = (1, m_1-1)$ (then $P(b,c)=1$), and if $m_1$ is even, let $(b,c) = (2, m_1-2)$ (then $P(b,c)=m_1/2$, which is a unit in $\Z_{(p)}$ because $m_1 < 2p$).
  Then the relation coming from $Y = Y(e; b,c, m_2, \dots, m_d; 1)$ is just $\pm P(b,c) X(e; m_1, \dots, m_d) = 0$, which implies that $X(e; m_1, \dots, m_d) = 0$ in $\gr_s W_k \otimes \Z_{(p)}$.

  Now suppose that the result has been proved for all cells with first non-zero $m_i$ at position $< j$, and consider a cell $X(e; 0, \dots, 0, m_j, \dots, m_d)$ with $m_j > 0$ and $m_i = 0$ for all $i < j$.
  Choose $m_j = b+c$ with $P(b,c)$ a unit in $\Z_{(p)}$ as above, and consider the relation coming from $Y = Y(e; 0, \dots, 0, b, c, m_{j+1}, \dots, m_d; j)$, whose unpinned fiber, in position $j$, has left neighbor $(0,1)$ and right neighbor $(c,1)$.
  The relation is
  \begin{equation*}
    \pm P(0,b) X(e; 0, \dots, 0, b, c, m_{j+1}, \dots, m_d) \pm P(b,c) X(e; 0, \dots, 0, m_j, m_{j+1}, \dots, m_d) = 0,
  \end{equation*}
  where the first cell has $j-2$ zeros and the second one has $j-1$ zeros.
  The first cell has its first non-zero entry at position $j-1$, so it vanishes in $\gr_s W_k \otimes \Z_{(p)}$ by the induction hypothesis.
  Since $P(b,c)$ is a unit in $\Z_{(p)}$, we conclude that $X(e; 0, \dots, 0, m_j, \dots, m_d) = 0$ in $\gr_s W_k \otimes \Z_{(p)}$.
\end{proof}

\begin{lemma}
  \label{lem:pure-basis}
  The $2p$ classes $\Pi(e,d)$ with $e+d=2p-1$ form a $\Z_{(p)}$-basis of $W_{2p-1}\otimes\Z_{(p)}$.
\end{lemma}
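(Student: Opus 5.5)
Lemma~\ref{lem:pure-generate} already shows that the $2p$ classes $\Pi(e,d)$ with $e+d=2p-1$ generate $W_{2p-1}\otimes\Z_{(p)}$, since $2p-1\leq 2p$. It therefore suffices to prove that $W_{2p-1}\otimes\Z_{(p)}$ is free of rank at least $2p$. A surjection $\Z_{(p)}^{2p}\to W_{2p-1}\otimes\Z_{(p)}$ onto a free module of rank at least $2p$ must be an isomorphism, because $\Z_{(p)}$ is a PID and a surjection between free modules of equal finite rank is bijective.

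\textbf{Freeness.} Since the punctured complex has no cells below Borel--Moore degree $k$, the group $W_k$ is the bottom Borel--Moore homology $\HBM_k(B_k(\Sigma_{1,1}^\circ))$. By Poincar\'e duality for the orientable $2k$-manifold $B_k(\Sigma_{1,1}^\circ)$, this is $H^k(B_k(\Sigma_{1,1}^\circ))$. The universal coefficient theorem then presents $W_k\otimes\Z_{(p)}$ as an extension of $\Hom(H_k,\Z_{(p)})$ by $\Ext(H_{k-1},\Z_{(p)})$. For $k=2p-1<2p$, Theorem~\ref{thm:punctured-threshold} says $H_*(B_{2p-1}(\Sigma_{1,1}^\circ);\Z_{(p)})$ is free. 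So the $\Ext$ term vanishes and $W_{2p-1}\otimes\Z_{(p)}$ is free of rank $b_{2p-1}=\dim_\Q H_{2p-1}(B_{2p-1}(\Sigma_{1,1}^\circ);\Q)$.

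\textbf{Rank.} It remains to show $b_{2p-1}\geq 2p$; in fact I expect equality. I would get the rational top Betti number from the Chevalley--Eilenberg model over $\Q$. The rational analogue of Lemma~\ref{lem:bs-ce-comparison}, or Knudsen's model directly, is $\Gamma(E_0,F_a,F_b)\otimes\Lambda(O_a,O_b,O_2)$ with $d(O_aO_b)=O_2$. In weight $k$ and degree $k$ we need total defect $0$, so neither $E_0$ nor $O_2$ may appear. The monomials are then products of $O_a$, $O_b$ (each to exponent at most one) with $F_a^{[i]}F_b^{[j]}$, and they must survive in homology. By Lemma~\ref{lem:punctured-exterior-homology}, $O_aO_b$ is not a cycle, so the exterior part is $O_a$ or $O_b$. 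Since $k=2p-1$ is odd, the $F$-part has weight $2p-2$, i.e.\ lies in $\Gamma^{p-1}$, which has dimension $p$. This gives $2\cdot p=2p$ classes.

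I would justify the rational count by the same fake-basechange argument used in deriving Theorem~\ref{thm:punctured-threshold}. Below weight $2p$, the mod-$p$ and rational Betti numbers agree, so $b_{2p-1}=\dim_{\F_p}H_{2p-1}(B_{2p-1};\F_p)$. Lemma~\ref{lem:bs-ce-comparison} computes the associated graded of the latter as the bidegree just counted, giving $2p$.

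The main obstacle is this rank count. One must make sure that the bidegree conventions (homological degree $2p-1$, weight $2p-1$) match $W_{2p-1}=H^{2p-1}$ via duality. One must also check that passing through $\gr_F$ does not change the dimension, which holds because a finite filtration preserves dimension. If this bookkeeping turned out awkward, the fallback is a direct argument: exhibit $2p$ linearly independent functionals on $W_{2p-1}\otimes\Z_{(p)}$ that separate the $\Pi(e,d)$. For example, use the pairing with homology classes built from $e$ points on the base circle times $d$ pinned vertical circles. Linear independence would follow from the fact that the relations coming from one-unpinned cells only involve non-pure cells after filtering.
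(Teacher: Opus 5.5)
Your proof is correct. Its skeleton is the same as the paper's: Lemma~\ref{lem:pure-generate} gives a surjection from the free module on the $2p$ pure cells, and a rank count then forces it to be an isomorphism. The differences are in how you obtain the rank and in what you ask of it.

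\textbf{How the paper argues.} The paper cites Drummond-Cole--Knudsen \cite[Proposition~3.5]{drummond-coleBettiNumbersConfiguration2017} for $\dim_\Q W_{2p-1}\otimes\Q=2p$. It then notes that the kernel of $\Z_{(p)}^{2p}\to W_{2p-1}\otimes\Z_{(p)}$ is a rank-zero submodule of a free module over a PID, hence zero. So freeness of the target comes out as a consequence rather than being an input.

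\textbf{Your freeness step.} Your separate argument (Poincar\'e duality, universal coefficients, and Theorem~\ref{thm:punctured-threshold} in weight $2p-1$) is correct but superfluous. The inequality $\dim_\Q W_{2p-1}\otimes\Q\geq 2p$ alone already suffices.

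\textbf{Your rank count.} You stay inside the paper instead of citing an external Betti-number computation. Torsion-freeness below weight $2p$ gives
\[
  b_{2p-1}=\dim_{\F_p}H_{2p-1}(B_{2p-1}(\Sigma_{1,1}^\circ);\F_p).
\]
Lemma~\ref{lem:bs-ce-comparison} then identifies the associated graded of this group with the weight-$(2p-1)$, degree-$(2p-1)$ part of $H_*(\mathcal E)$. Your defect count there is right: there is no $E_0$ and no $O_2$, the exterior part is $O_a$ or $O_b$, and the $F$-part lies in $\Gamma^{p-1}H$. This gives $2p$. The bidegree conventions also match, since weight $k$ and degree $d$ in $\mathcal E$ correspond to $H_d(B_k(\Sigma_{1,1}^\circ))$.

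\textbf{Comparison.} Your route makes the lemma self-contained within the paper. The cost is that it passes through the comparison lemma, which is much heavier machinery than the statement needs. Running the same count directly in Knudsen's rational model, as you also suggest, would essentially reproduce the cited Drummond-Cole--Knudsen computation.
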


\begin{proof}
  By Lemma~\ref{lem:pure-generate}, there is a surjection from $\Z_{(p)}\{\Pi(e,d) \mid e+d=2p-1\}$ to $W_{2p-1}\otimes\Z_{(p)}$.
  By~\cite[Proposition~3.5]{drummond-coleBettiNumbersConfiguration2017}, the $\Q$-dimension of $W_{2p-1}\otimes\Q$ is $2p$, so the kernel has rank $0$.
  A rank $0$ submodule of a free module over a PID is trivial, so the surjection is an isomorphism.
\end{proof}

\subsection{The connecting map on pure cells}
\label{sec:connecting-map}

We compute the connecting map on pure generators; its cokernel will give the bottom closed-torus group needed for Theorem~\ref{thmC}.
The map induced by~\eqref{eq:closed-punctured-complexes} is
\begin{equation*}
  \delta:W_k\otimes\Z_{(p)}\longrightarrow W_{k-1}\otimes\Z_{(p)}.
\end{equation*}
It sends a punctured cycle in weight $k$ to a punctured cycle in weight $k-1$ by lifting it to a closed chain, applying the closed differential, and identifying the result with a punctured cycle.

\begin{lemma}
  \label{lem:connecting-pure}
  If $e+d=k$, then
  \begin{equation}
    \delta\Pi(e,d)
    =-\bigl(1+(-1)^e\bigr)\mathbbm{1}_{e\geq1}\Pi(e-1,d)
    +(-1)^e\bigl((-1)^{d-1}-1\bigr)\mathbbm{1}_{d\geq1}\Pi(e,d-1).
    \label{eq:connecting-pure}
  \end{equation}
\end{lemma}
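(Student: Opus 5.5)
We compute $\delta$ directly from the snake-lemma recipe attached to~\eqref{eq:closed-punctured-complexes}. The pure punctured cell $\Pi(e,d)=(e,0;(0,1),\dots,(0,1))$ has an evident lift to $C^{\mathrm{cl}}(k)$: the closed cell with the same index and $v=0$. Applying the closed differential produces only faces that occupy the base north pole, since the punctured differential of $\Pi(e,d)$ vanishes. These faces lie in the subcomplex $C^{\mathrm{pu}}(k-1)$ of $v=1$ cells, and deleting the pinned base point identifies them with cells of weight $k-1$. So $\delta\Pi(e,d)$ is the $v=1$ part of the closed boundary of the lifted cell, read back as a punctured cycle.

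We then list which of the five face types can occur for the pure cell. Types (1) and (2) are absent: all fibers are pinned, so no merge is allowed, and there are no off-pole fiber points. Type (3) moves a base-circle point to the base north pole. It exists when $e\geq1$, with coefficient $-(1+(-1)^e)$, and lands on $(e-1,1;(0,1)^d)$, which corresponds to $\Pi(e-1,d)$ after removing the base pin. Types (4) and (5) slide the leftmost, respectively rightmost, fiber onto the base circle. Each carries $m=0$ off-pole points and transfers its pin to the base north pole, giving $(e,1;(0,1)^{d-1})$, that is, $\Pi(e,d-1)$. Since $P(e,0)=1$, the left slide has coefficient $-(-1)^e$. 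With $m_d=0$ the right slide has $S=d-1$, so its coefficient is $(-1)^{e+(d-1)}$. Adding the two gives $(-1)^e((-1)^{d-1}-1)$, which is the second term of~\eqref{eq:connecting-pure}. When $d=1$ the left and right slides are the same fiber; we must check that this case gives $0$, consistent with the formula $(-1)^e(1-1)=0$, and that no face is counted twice.

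The main obstacle is the signs. The paper states that the signs of types (3)--(5) are derived here, so the listed coefficients cannot simply be quoted; we rederive them from the orientation convention. The cell is oriented by the $e$ ordered arc coordinates, followed by the $d$ fiber positions (there are no off-pole coordinates). We also need the orientation of the target face in $C^{\mathrm{pu}}(k-1)$, where the base-pinned point contributes no coordinate.

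For type (3), the face is the limit in which the last (or first) arc coordinate reaches the pole. The two ends of the arc contribute with relative sign $(-1)^e$, once we reorder the remaining $e-1$ cyclic coordinates. This gives $-(1+(-1)^e)$, and the vanishing for even $e$ reflects the orientation reversal of cyclic rotation.

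For the slides, the leftmost fiber position hits the left end of the cylinder. That coordinate sits in position $e+1$, so moving it to the front contributes $(-1)^e$, and the outward normal gives the extra minus sign. The rightmost fiber position is the last coordinate and approaches the right end with the opposite normal sign. The pin merging into the base north pole creates no new coordinates, since $m=0$.

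I would carry out these two sign checks explicitly in local coordinates near each face. This is the only nontrivial step; the rest is bookkeeping.
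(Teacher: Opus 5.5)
Your proposal is correct and follows the paper's proof: lift $\Pi(e,d)$ to the closed complex, observe that only the type (3)--(5) faces survive, and obtain the signs by contracting $\omega_{e,d}$ with the outward normal, which gives exactly $-(1+(-1)^e)$, $-(-1)^e$ and $(-1)^{e+d-1}$. Two small points need tidying. For $d=1$, the faces $t_1\to0$ and $t_1\to1$ are distinct boundary faces that only share a target, as the paper notes, so both must be counted rather than deduplicated, and this is why the sum is $0$. Also, $-(1+(-1)^e)$ vanishes for odd $e$, not even $e$.
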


\begin{proof}
  Recall that the pure cell $\Pi(e,d)$ has $d$ pinned fibers and $e$ off-pole points on the base circle.
  It is a cycle in the punctured complex: merging adjacent fibers is impossible, and every other codimension-one face occupies the base north pole and is therefore discarded in the punctured complex.

  To compute $\delta\Pi(e,d)$, we lift $\Pi(e,d)$ to the closed complex and apply the closed differential.
  Write its orientation form as
  \begin{equation*}
    \omega_{e,d}=d\theta_1\wedge\cdots\wedge d\theta_e
    \wedge dt_1\wedge\cdots\wedge dt_d.
  \end{equation*}
  For the two fiber endpoints, the outward-normal-first convention gives
  \begin{equation*}
    \iota_{-\partial_{t_1}}\omega_{e,d}=-(-1)^e\omega_{e,d-1},
    \qquad
    \iota_{\partial_{t_d}}\omega_{e,d}=(-1)^{e+d-1}\omega_{e,d-1}.
  \end{equation*}
  When $d=1$, $t_1=0$ and $t_1=1$ are distinct boundary faces, although
  the torus gluing identifies their targets.  Similarly, the two base-circle
  endpoint faces have coefficients $-1$ and $(-1)^{e-1}$, whose sum is
  $-(1+(-1)^e)$.
  There are only three kinds of possibly non-zero contributions:
  \begin{itemize}
    \item If $e\ge1$, then the base north pole can be occupied by sliding an off-pole point from the base circle.
      The resulting cell is $\Pi(e-1,d)$, with coefficient $-(1+(-1)^e)$.
    \item If $d\ge1$, then the leftmost pinned fiber can slide onto the base circle.
      The resulting cell is $\Pi(e,d-1)$, with coefficient $-(-1)^eP(e,0)=-(-1)^e$.
    \item If $d\ge1$, then the rightmost pinned fiber can slide onto the base circle.
      The resulting cell is $\Pi(e,d-1)$, with coefficient $(-1)^{e+(d-1)}P(e,0)=(-1)^{e+d-1}$.
  \end{itemize}
  Since $-(-1)^e+(-1)^{e+d-1}=(-1)^e\bigl((-1)^{d-1}-1\bigr)$, the sum of these three contributions gives~\eqref{eq:connecting-pure}.
\end{proof}

\begin{proposition}
  \label{prop:cokernel-delta}
  For every odd prime $p$,
  \begin{equation*}
    \operatorname{coker}\bigl(
      \delta:W_{2p}\otimes\Z_{(p)}\longrightarrow W_{2p-1}\otimes\Z_{(p)}
    \bigr)
    \cong\Z_{(p)}^{p-1}.
  \end{equation*}
\end{proposition}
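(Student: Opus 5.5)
The plan is to compute the image of $\delta$ explicitly, using the generating set from Lemma~\ref{lem:pure-generate} for the source, the basis from Lemma~\ref{lem:pure-basis} for the target, and the formula~\eqref{eq:connecting-pure} of Lemma~\ref{lem:connecting-pure}. Since $2p\le 2p$, Lemma~\ref{lem:pure-generate} says the classes $\Pi(e,d)$ with $e+d=2p$ generate $W_{2p}\otimes\Z_{(p)}$. Hence $\operatorname{im}\delta$ is the $\Z_{(p)}$-span of the vectors $\delta\Pi(e,2p-e)$, $0\le e\le 2p$, written in the basis $\{\Pi(e',2p-1-e')\}_{0\le e'\le 2p-1}$ of $W_{2p-1}\otimes\Z_{(p)}$ (Lemma~\ref{lem:pure-basis}). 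Note that we never need the $\Pi(e,d)$ to be independent in weight $2p$, only that they span.

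Next, evaluate~\eqref{eq:connecting-pure} with $d=2p-e$. The first coefficient $-(1+(-1)^e)$ is $-2$ for $e$ even and $0$ for $e$ odd. Because $2p$ is even, $d-1=2p-1-e$ has parity opposite to $e$. So the second coefficient $(-1)^e((-1)^{d-1}-1)$ equals $-2$ when $e$ is even and $0$ when $e$ is odd.

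The formula then splits by parity of $e$. For $e$ odd, $\delta\Pi(e,2p-e)=0$. For $e$ even with $2\le e\le 2p-2$,
\[
  \delta\Pi(e,2p-e)=-2\,\Pi(e-1,2p-e)-2\,\Pi(e,2p-e-1).
\]
At the two extremes, $e=0$ gives $\delta\Pi(0,2p)=-2\,\Pi(0,2p-1)$, and $e=2p$ gives $\delta\Pi(2p,0)=-2\,\Pi(2p-1,0)$.

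We index the target basis by $e'=0,\dots,2p-1$. The image is then spanned, up to the unit $-2$ (here we use that $p$ is odd), by the following vectors:
\begin{itemize}
\item $f_0$;
\item $f_{2i-1}+f_{2i}$ for $1\le i\le p-1$;
\item $f_{2p-1}$.
\end{itemize}
These $p+1$ vectors involve pairwise disjoint sets of basis vectors. They therefore span a direct summand of $\Z_{(p)}^{2p}$, whose complement is generated by $f_{2i}$ for $1\le i\le p-1$. Hence the cokernel is free of rank $2p-(p+1)=p-1$.

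The main obstacle is the sign bookkeeping in~\eqref{eq:connecting-pure}. Concretely, we must check that the left and right slides cancel for $e$ odd and add for $e$ even, and that these cancellations line up with those of the base-circle term. This parity coincidence is what makes the image a direct summand rather than merely a sublattice of finite index.
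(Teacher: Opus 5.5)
Your proposal is correct and follows the paper's route. You span $\operatorname{im}\delta$ by the images of the pure generators $\Pi(e,2p-e)$ and read off from~\eqref{eq:connecting-pure} that the odd--odd cells die. The even--even cells give $-2$ times vectors with pairwise disjoint supports in the pure basis of $W_{2p-1}\otimes\Z_{(p)}$, so the image is a free direct summand of rank $p+1$. Your explicit complement $\{f_{2i}\}_{1\le i\le p-1}$ is a concrete version of the paper's primitive-vector basis-extension step.
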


\begin{proof}
  By Lemma~\ref{lem:pure-generate}, the source $W_{2p} \otimes \Z_{(p)}$ is generated by the pure cells $\Pi(e,d)$ with $e+d=2p$, so the image of $\delta$ is spanned by their images.
  There are $p+1$ such cells with both $e$ and $d$ even, and $p$ such cells with both $e$ and $d$ odd.
  According to Lemma~\ref{lem:connecting-pure}, the connecting map kills $\Pi(e,d)$ when both $e$ and $d$ are odd, and satisfies $\delta\Pi(e,d) = \pm2\Pi(e-1,d) \pm2\Pi(e,d-1)$ when both $e$ and $d$ are even (if $e=0$ or $d=0$, the corresponding term is absent, as recorded by the indicator functions in~\eqref{eq:connecting-pure}).
  Note that $2$ is a unit in $\Z_{(p)}$.

  By Lemma~\ref{lem:pure-basis}, the target $W_{2p-1} \otimes \Z_{(p)}$ is freely spanned by pure cells $\Pi(e,d)$ with $e+d=2p-1$.
  The supports of the $p+1$ non-zero images $\delta\Pi(e,d)$ (with both $e$ and $d$ even) are pairwise disjoint.
  Indeed, a target of the form $\Pi(2i-1,2j)$ can only be hit by $\delta\Pi(2i,2j)$, and a target of the form $\Pi(2i,2j-1)$ can only be hit by $\delta\Pi(2i,2j)$.
  Each non-zero image has a unit coefficient on its support and is therefore a
  primitive vector in the free submodule spanned by that support.  Since the
  supports are pairwise disjoint, independent basis changes on these support
  blocks extend the $p+1$ images to part of a basis of
  $W_{2p-1}\otimes\Z_{(p)}$.  Thus $\operatorname{im}\delta$ is a free direct
  summand of rank $p+1$, and the cokernel is free of rank
  $(2p)-(p+1)=p-1$.
\end{proof}

\begin{proof}[Proof of Theorem~\ref{thmC}]
  Part of the long exact homology sequence induced by the short exact sequence of chain complexes~\eqref{eq:closed-punctured-complexes} (with $\Z_{(p)}$-coefficients) is:
  \begin{equation*}
    H_{2p}(C^{\mathrm{pu}}(2p))
    \xrightarrow{\delta}
    H_{2p-1}(C^{\mathrm{pu}}(2p-1))
    \longrightarrow
    H_{2p-1}(C^{\mathrm{cl}}(2p))
    \longrightarrow
    H_{2p-1}(C^{\mathrm{pu}}(2p)).
  \end{equation*}
  The last group vanishes as $C^{\mathrm{pu}}(2p)$ has no cells in Borel--Moore degree $< 2p$.
  Moreover, we have $H_{2p-1}(C^{\mathrm{pu}}(2p-1)) = W_{2p-1}\otimes\Z_{(p)}$ and $H_{2p}(C^{\mathrm{pu}}(2p)) = W_{2p}\otimes\Z_{(p)}$, so Proposition~\ref{prop:cokernel-delta} gives
  \begin{equation*}
    H_{2p-1}(C^{\mathrm{cl}}(2p)) \cong \operatorname{coker}(\delta) \cong \Z_{(p)}^{p-1}.
  \end{equation*}
  In particular, this is a free $\Z_{(p)}$-module.
  Since $H_{2p-1}(C^{\mathrm{cl}}(2p))\cong\HBM_{2p-1}(B_{2p}(\Sigma_1);\Z_{(p)})$, we conclude by Lemma~\ref{lem:bm-uct}, applied with $n=2p$ and $r=2p$, that $H_{2p}(B_{2p}(\Sigma_1);\Z)$ has no $p$-torsion.

  Similarly, $H_{2p-2}(C^{\mathrm{cl}}(2p))\cong\HBM_{2p-2}(B_{2p}(\Sigma_1);\Z_{(p)})$ vanishes, because the closed complex has no cells in Borel--Moore degree below $2p-1$.
  Lemma~\ref{lem:bm-uct}, applied with $n=2p$ and $r=2p+1$, then shows that $H_{2p+1}(B_{2p}(\Sigma_1);\Z)$ has no $p$-torsion.
\end{proof}

\printbibliography

@article{anAsymptoticCorrigendum2026,
  author     = {An, Byung Hee and Drummond-Cole, Gabriel C. and Knudsen, Ben},
  date       = {2026},
  eprint     = {2602.20228},
  eprinttype = {arXiv},
  title      = {Corrigendum to ``Asymptotic Homology of Graph Braid Groups''},
}

@article{anAsymptoticHomology2022,
  author       = {An, Byung Hee and Drummond-Cole, Gabriel C. and Knudsen, Ben},
  date         = {2022},
  doi          = {10.2140/gt.2022.26.1745},
  eprint       = {2005.08286},
  eprinttype   = {arXiv},
  journaltitle = {Geom. Topol.},
  pages        = {1745--1771},
  title        = {Asymptotic Homology of Graph Braid Groups},
  volume       = {26},
}

@article{bassSolutionCongruence1967,
  author       = {Bass, Hyman and Milnor, John and Serre, Jean-Pierre},
  date         = {1967},
  doi          = {10.1007/BF02684586},
  issn         = {0073-8301},
  journaltitle = {Publ. Math. Inst. Hautes Études Sci.},
  pages        = {59--137},
  title        = {Solution of the Congruence Subgroup Problem for {$SL_n$} ($n\geq3$) and {$Sp_{2n}$} ($n\geq2$)},
  volume       = {33},
}

@article{bianchiHomologyConfigurationSpaces2024,
  author       = {Bianchi, Andrea and Stavrou, Andreas},
  date         = {2024},
  doi          = {10.1515/crelle-2024-0029},
  eprint       = {2307.08664},
  eprinttype   = {arXiv},
  issn         = {0075-4102},
  journaltitle = {J. Reine Angew. Math.},
  pages        = {197--258},
  title        = {Homology of Configuration Spaces of Surfaces modulo an Odd Prime},
  volume       = {813},
}

@article{bodigheimerHomologyConfigurationSpaces1989,
  author       = {Bödigheimer, Carl-Friedrich and Cohen, Fred and Taylor, Laurence},
  date         = {1989},
  doi          = {10.1016/0040-9383(89)90035-9},
  issn         = {0040-9383},
  journaltitle = {Topology},
  mrnumber     = {991102},
  number       = {1},
  pages        = {111--123},
  title        = {On the Homology of Configuration Spaces},
  volume       = {28},
}

@article{chenModHomologyUnordered2024,
  author       = {Chen, Matthew and Zhang, Adela YiYu},
  date         = {2024},
  doi          = {10.1090/proc/16683},
  eprint       = {2208.10293},
  eprinttype   = {arXiv},
  issn         = {0002-9939},
  journaltitle = {Proc. Am. Math. Soc.},
  number       = {5},
  pages        = {2239--2248},
  title        = {Mod $p$ Homology of Unordered Configuration Spaces of $p$ Points in Parallelizable Surfaces},
  volume       = {152},
}

@article{chettihHomologyConfiguration2018,
  author       = {Chettih, Safia and Lütgehetmann, Daniel},
  date         = {2018},
  doi          = {10.2140/agt.2018.18.2443},
  eprint       = {1612.08290},
  eprinttype   = {arXiv},
  issn         = {1472-2747},
  journaltitle = {Algebr. Geom. Topol.},
  number       = {4},
  pages        = {2443--2469},
  title        = {The Homology of Configuration Spaces of Trees with Loops},
  volume       = {18},
}

@book{cohenHomologyIteratedLoop1976,
  author    = {Cohen, Frederick R. and Lada, Thomas J. and May, J. Peter},
  date      = {1976},
  doi       = {10.1007/BFb0080464},
  publisher = {Springer},
  series    = {Lecture Notes in Mathematics},
  title     = {The Homology of Iterated Loop Spaces},
  volume    = {533},
}

@article{cooperTwoModP2015,
  author       = {Cooper, James},
  date         = {2015},
  doi          = {10.1142/S1793525315500120},
  eprint       = {1402.4186},
  eprinttype   = {arXiv},
  journaltitle = {J. Topol. Anal.},
  number       = {2},
  pages        = {309--343},
  title        = {Two Mod-$p$ Johnson Filtrations},
  volume       = {7},
}

@article{DotyHenke2005,
  author  = {Doty, Stephen R. and Henke, Anne E.},
  journal = {Q. J. Math.},
  number  = {2},
  pages   = {189--207},
  title   = {Decomposition of tensor products of modular irreducibles for {$SL_2$}},
  volume  = {56},
  year    = {2005},
}

@article{drummond-coleBettiNumbersConfiguration2017,
  author       = {Drummond-Cole, Gabriel C. and Knudsen, Ben},
  date         = {2017},
  doi          = {10.1112/jlms.12066},
  eprint       = {1608.07490},
  eprinttype   = {arXiv},
  issn         = {0024-6107},
  journaltitle = {J. Lond. Math. Soc.},
  mrnumber     = {3708955},
  number       = {2},
  pages        = {367--393},
  series       = {2},
  title        = {Betti Numbers of Configuration Spaces of Surfaces},
  volume       = {96},
}

@article{fadellConfigurationSpaces1962,
  author       = {Fadell, Edward and Neuwirth, Lee},
  date         = {1962},
  doi          = {10.7146/math.scand.a-10517},
  journaltitle = {Math. Scand.},
  pages        = {111--118},
  title        = {Configuration Spaces},
  volume       = {10},
}

@book{farbPrimerMappingClass2012,
  author    = {Farb, Benson and Margalit, Dan},
  date      = {2012},
  isbn      = {978-0-691-14794-9},
  mrnumber  = {2850125},
  number    = {49},
  pagetotal = {472},
  publisher = {Princeton University Press},
  series    = {Princeton {{Mathematical Series}}},
  title     = {A Primer on Mapping Class Groups},
}

@article{fuksCohomologiesGroupCOS1970,
  author       = {Fuks, D. B.},
  date         = {1970},
  doi          = {10.1007/BF01094491},
  issn         = {1573-8485},
  journaltitle = {Funct. Anal. Appl.},
  langid       = {english},
  number       = {2},
  pages        = {143--151},
  title        = {Cohomologies of the Group {{COS}} Mod 2},
  volume       = {4},
}

@article{hainautRepresentationAsymptotics2025,
  author     = {Hainaut, Louis and Knudsen, Ben and Wawrykow, Nicholas},
  date       = {2025},
  eprint     = {2510.00201},
  eprinttype = {arXiv},
  title      = {Representation Asymptotics in the Homology of Pure Graph Braid Groups},
}

@article{humphreysRepresentationsSl21975,
  author       = {Humphreys, J. E.},
  date         = {1975},
  doi          = {10.1080/00029890.1975.11993765},
  issn         = {0002-9890},
  journaltitle = {Am. Math. Mon.},
  number       = {1},
  pages        = {21--39},
  publisher    = {Taylor \& Francis},
  title        = {Representations of {$SL(2,p)$}},
  volume       = {82},
}

@article{idrissiRocaPointSet2026,
  author     = {Idrissi, Najib and Roca i Lucio, Victor},
  date       = {2026},
  eprint     = {2606.26802},
  eprinttype = {arXiv},
  title      = {Homology of Configuration Spaces in Positive Characteristic via Point-Set Constructions},
}

@software{idrissiRocaLucioSoftware,
  author  = {Idrissi, Najib and Roca i Lucio, Victor},
  date    = {2026-06-25},
  doi     = {10.5281/zenodo.20843322},
  title   = {\texttt{uconf} -- SageMath library for computations with configuration spaces and operadic constructions},
  url     = {https://github.com/nidrissi/uconf},
  version = {1.0.0},
}

@article{jordanQuantumDModules2009,
  author       = {Jordan, David},
  date         = {2009},
  doi          = {10.1093/imrp/rnp012},
  eprint       = {0805.2766},
  eprinttype   = {arXiv},
  journaltitle = {Int. Math. Res. Not. IMRN},
  number       = {11},
  pages        = {2081--2105},
  title        = {Quantum {$D$}-Modules, Elliptic Braid Groups, and Double Affine Hecke Algebras},
}

@article{knudsenBettiNumbersStability2017,
  author       = {Knudsen, Ben},
  date         = {2017},
  doi          = {10.2140/agt.2017.17.3137},
  eprint       = {1405.6696},
  eprinttype   = {arXiv},
  issn         = {1472-2747},
  journaltitle = {Algebr. Geom. Topol.},
  number       = {5},
  pages        = {3137--3187},
  title        = {Betti Numbers and Stability for Configuration Spaces via Factorization Homology},
  volume       = {17},
}

@article{koCharacteristicsGraphBraid2012,
  author       = {Ko, Ki Hyoung and Park, Hyo Won},
  date         = {2012},
  doi          = {10.1007/s00454-012-9459-8},
  eprint       = {1101.2648},
  eprinttype   = {arXiv},
  issn         = {0179-5376},
  journaltitle = {Discrete Comput. Geom.},
  number       = {4},
  pages        = {915--963},
  title        = {Characteristics of Graph Braid Groups},
  volume       = {48},
}

@article{maciazekNonAbelianQuantum2019,
  author       = {Maciążek, Tomasz and Sawicki, Adam},
  date         = {2019},
  doi          = {10.1007/s00220-019-03583-5},
  eprint       = {1806.02846},
  eprinttype   = {arXiv},
  issn         = {0010-3616},
  journaltitle = {Commun. Math. Phys.},
  number       = {3},
  pages        = {921--973},
  title        = {Non-Abelian Quantum Statistics on Graphs},
  volume       = {371},
}

@article{maguireComputingCohomology2016,
  author     = {Maguire, Megan},
  date       = {2016},
  eprint     = {1612.06314},
  eprinttype = {arXiv},
  note       = {With an appendix by Matthew Christie and Derek Francour},
  title      = {Computing Cohomology of Configuration Spaces},
}

@article{mcduffConfigurationSpacesPositive1975,
  author       = {McDuff, Dusa},
  date         = {1975},
  doi          = {10.1016/0040-9383(75)90038-5},
  issn         = {0040-9383},
  journaltitle = {Topology},
  number       = {1},
  pages        = {91--107},
  title        = {Configuration Spaces of Positive and Negative Particles},
  volume       = {14},
}

@article{napolitanoCohomologyConfigurationSpaces2003,
  author       = {Napolitano, Fabien},
  date         = {2003},
  doi          = {10.1112/S0024610703004617},
  issn         = {0024-6107},
  journaltitle = {J. Lond. Math. Soc.},
  number       = {2},
  pages        = {477--492},
  series       = {2},
  shortjournal = {J. Lond. Math. Soc.},
  title        = {On the Cohomology of Configuration Spaces on Surfaces},
  volume       = {68},
}

@article{pagariaCohomologyRingsTorus2020,
  author       = {Pagaria, Roberto},
  date         = {2020},
  doi          = {10.2140/agt.2020.20.2995},
  eprint       = {1901.01171},
  eprinttype   = {arXiv},
  journaltitle = {Algebr. Geom. Topol.},
  number       = {6},
  pages        = {2995--3012},
  title        = {The Cohomology Rings of the Unordered Configuration Spaces of the Torus},
  volume       = {20},
}

@article{perronFiltrationJohnsonGroupe2008,
  author       = {Perron, Bernard},
  date         = {2008},
  doi          = {10.1016/j.crma.2008.04.015},
  issn         = {1631-073X},
  journaltitle = {C. R. Math. Acad. Sci. Paris},
  number       = {11--12},
  pages        = {667--670},
  title        = {Filtration de {Johnson} et groupe de {Torelli} modulo {$p$}, {$p$} premier},
  volume       = {346},
}

@article{randalWilliamsHomologicalStability2013,
  author       = {Randal-Williams, Oscar},
  date         = {2013},
  doi          = {10.1093/qmath/har033},
  eprint       = {1105.5257},
  eprinttype   = {arXiv},
  issn         = {0033-5606},
  journaltitle = {Q. J. Math.},
  number       = {1},
  pages        = {303--326},
  title        = {Homological Stability for Unordered Configuration Spaces},
  volume       = {64},
}

@article{schiesslBettiNumbersTorus2016,
  author     = {Schiessl, Christoph},
  date       = {2016},
  eprint     = {1602.04748},
  eprinttype = {arXiv},
  title      = {Betti Numbers of Unordered Configuration Spaces of the Torus},
}

@book{serreLinearRepresentations1977,
  author     = {Serre, Jean-Pierre},
  date       = {1977},
  doi        = {10.1007/978-1-4684-9458-7},
  mrnumber   = {0450380},
  number     = {42},
  publisher  = {Springer-Verlag},
  series     = {Graduate Texts in Mathematics},
  title      = {Linear Representations of Finite Groups},
  translator = {Scott, Leonard L.},
}

@article{vainsteinCohomologyBraidGroups1978,
  author       = {Vaĭnšteĭn, F. V.},
  date         = {1978},
  issn         = {0374-1990},
  journaltitle = {Funkts. Anal. Prilozh.},
  mrnumber     = {498903},
  number       = {2},
  pages        = {72--73},
  title        = {The Cohomology of Braid Groups},
  volume       = {12},
}

@article{zhangQuillenHomology2025,
  author       = {Zhang, Adela YiYu},
  date         = {2025},
  doi          = {10.2140/agt.2025.25.1945},
  eprint       = {2110.08428},
  eprinttype   = {arXiv},
  journaltitle = {Algebr. Geom. Topol.},
  number       = {4},
  pages        = {1945--1997},
  title        = {Quillen Homology of Spectral Lie Algebras with Application to Mod {$p$} Homology of Labeled Configuration Spaces},
  volume       = {25},
}
\end{document}